\numberwithin{equation}{section}
\newtheorem{theorem}{Theorem}[section]
\newtheorem{lemma}[theorem]{Lemma}
\newtheorem{proposition}[theorem]{Proposition}
\newtheorem{definition}[theorem]{Definition}
\newtheorem*{theorem*}{Theorem}
\theoremstyle{remark}
\newtheorem{remark}[theorem]{\bf{Remark}}
\newcommand{\pbar}{\overline p}
\newcommand{\pt}{\widetilde p}
\newcommand{\zbar}{\overline z}
\newcommand{\zwave}{\widetilde z}
\newcommand{\e}{\varepsilon}
\newcommand{\ah}{\alpha}
\newcommand{\D}{\Delta}
\newcommand{\n}{\nabla}
\newcommand{\g}{\gamma}
\newcommand{\G}{\Gamma}
 \title{Well Productivity Index for Compressible Fluids and Gases} 
\newcommand{\authorfootnotes}{\renewcommand\thefootnote{\fnsymbol{footnote}}}
\begin{document}
\maketitle
\begin{center}
  \normalsize
  \authorfootnotes
  EUGENIO AULISA\footnote{{\it E-mail}: eugenio.aulisa@ttu.edu}\textsuperscript{1}, LIDIA BLOSHANSKAYA\footnote{{\it E-mail}: bloshanl@newpaltz.edu}
  \textsuperscript{2},
  AKIF IBRAGIMOV\footnote{{\it E-mail}: akif.ibraguimov@ttu.edu}\textsuperscript{1}\par \vspace{0.5cm}

  \textsuperscript{1}{\small Texas Tech University, Department of Mathematics and Statistics, Broadway and Boston, Lubbock, TX 79409-1042}\\
  \textsuperscript{2}{\small SUNY New Paltz, Department of Mathematics, 1 Hawk Dr, New Paltz, NY 12561}\par \bigskip

  \today\bigskip
\end{center}

\begin{abstract}
In this paper we discuss the notion of the diffusive capacity for the generalized Forchheimer flow of fluid through porous media. The diffusive capacity is an integral characteristic of the flow motivated by the engineering notion of the productivity index (PI), \cite{Dake, Raghavan, PI-gas}. The PI characterizes the well capacity with respect to drainage area of the well and in general is time dependent. We study its time dynamics for two types of fluids:  slightly compressible and strongly compressible fluid (ideal gas).  
In case of the slightly compressible fluid  the PI  stabilizes in time to the specific value, determined by the so-called pseudo steady state solution, \cite{ABI11, ABI12, AIVW09}. Here we generalize our results from \cite{ABI12} on long term dynamics of the PI in case of arbitrary order of the nonlinearity of the flow.

In this paper we   study the mathematical model of the  PI for compressible gas flow for the first time. In contrast to slightly compressible fluid this functional mathematically speaking is not time-invariant. At the same  time it stays "almost" constant for a long period of time, but then it rapidly blows up as time approaches the certain critical value. This value depends on the initial data (initial reserves) of the reservoir. The ``greater'' are the initial reserves, the larger is this critical value. We present numerical and analytical results for the time asymptotic of the PI and its stability with respect to the initial data. Using comparison theorems  for porous media equation from  \cite{VazquezPorousBook}  we obtain estimates between the PI's for the original gas flow and  auxiliary flow with a distributed source. The latter  one generates the time independent PI, and can be calculated using formula similar to one in case of slightly compressible fluid. 
\end{abstract}

\newpage
\tableofcontents

\section{Historical Remarks and Review of the Results}

The classical equation describing the fluid flow in porous media is the Darcy's law, stating the linear relation between the pressure gradient $\n p$ and velocity $u$.
Darcy himself observed in \cite{Darcy} that the area of applicability of linear relation is very limited. When, for instance, the fluid has high velocity or in the presence of fractures in the media, the nonlinear models are necessary to capture the properties of the flow.

One of the widely-used nonlinear models is the Forchheimer equation 
in the form $g(|u|)u=-\nabla p$, where $g(s)$ is a polynomial (see \cite{Bear, Muskat}). 
Originally Forchheimer in his work \cite{Forch} proposed three particular equations 
to match the experimental data: two term, three term and power laws, 
with  $g(s)$ being up to second order degree polynomial. 
To embrace the recent findings on the nonlinearity of the fluid flow 
(see \cite{li-engler, kazemi-ozkan}) and simplify the mathematical handling, 
it is convenient to consider the generalization of classical Forchheimer 
equations to the case where $g(s)$ is the generalized polynomial with non-negative 
coefficients and, possibly, non-integer powers (see \cite{ABHI1}). We call this family 
of equations the {\it $g$-Forchheimer equations}. 
The $g$-Forchheimer equation combined with the equation of state and the conservation 
of mass results in a single degenerate parabolic equation for the pressure $p(x,t)$, only.

In this paper we discuss two types of fluids: slightly compressible fluid, 
characterized by the equation of state $\rho(p)\sim \exp(\gamma p)$ 
with very small compressibility constant ($\gamma \sim10^{-8}$), 
and strongly compressible fluid, ideal gas, characterized by 
the equation of state $\rho(p)\sim p$, \cite{Muskat}. Note that in our model 
we assume that the porosity of the porous media does not depend on the pressure. 
In case of slightly compressible fluid we studied different properties of 
$g$-Forchheimer equations \cite{ABHI1, ABI12, HI12anydegree}. 
In this paper we extend the results of our work \cite{ABI12} 
on asymptotic behavior of the pressure function to the case when the degree 
of the $g$-polynomial is arbitrary. In case of ideal gas we discuss 
both numerical and analytical results for the time dynamics of the solution 
of the corresponding parabolic equation.


Keeping in mind that the applications of our findings are in geophysics 
and in particular in reservoir engineering, 
we restrict out studies to fluid flow in a reservior with bounded domain $U$. 
The reservoir  is bounded by the exterior impermeable boundary $\Gamma_e$
and the interior well-boundary $\Gamma_i$. $\Gamma_i$ 
is subject to various boundary conditions depending on the flow regime. 
We introduce the capacity type functional to study the asymptotic  behavior  
of the fluid flow in the reservoir with respect to time $t$. 
This functional is motivated by the notion of the well Productivity Index (PI) 
and is often used by reservoir engineers to measure well capacity, see \cite{Dake,Raghavan}. 
Productivity index is the total amount of fluid per unit pressure drawdown 
(the difference between reservoir and well pressures) that can be extracted 
by the well from a reservoir  (see \cite{Raghavan}). 
It is defined as $J_g(\Gamma_i)(t)=Q(t)/PDD(t)$, where $Q(t)$ is the total 
flux through the boundary $\Gamma_i$, and the pressure drawdown $PDD(t)$ 
is equal to the difference between averages of the pressure 
in the domain and on the boundary $\Gamma_i$.  

In general the diffusive capacity is time dependent (see \cite{Dake,Raghavan}) 
for both slightly compressible and compressible fluid flows. 
However its time dynamics differs greatly in these two cases. 

For slightly compressible fluids and specific regimes of production  
the PI stabilizes in time to a constant value. 
This value can be determined using the solution of a particular 
boundary value problem.  
Namely, the time-invariant diffusive 
capacity is associated to the pressure distribution 
$ p_s(x,t)=-\frac{Q_s}{V}t+W(x)$. 
Here the initial pressure distribution $W(x)$  
is the solution of the specific steady state BVP, $Q_s$  
is a constant flux through the well-boundary, and $V$ is the volume of 
the reservoir domain. Such pressure is called the 
{\it pseudo-steady state $($PSS$)$ pressure} and satisfies the split 
boundary condition on the well $-\frac{Q_s}{V}\,t+\varphi(x)$ 
for some known function $\varphi(x)$. 

For arbitrary initial data one can not expect the PSS pressure 
distribution and constant diffusive capacity. 
However, as it appears from the engineering  practice 
(see \cite{Dake, Raghavan}) the constant PSS PI serves 
as an attractor for the transient PI.  We proved this 
fact in \cite{ABI11, ABI12} under a number of conditions. 
In this paper we will generalize our previous results.  
We consider two types of boundary value problems. 

In the IBVP-I we impose total flux $Q(t)$ 
on the well-boundary and consider the trace of the 
pressure function to be split as $\gamma(t)+\psi(x,t)$, with $\int_{\Gamma_i}\psi\,ds=0$. 
In this case $\gamma(t)$ can be considered as the average of the 
trace function and $\psi$ as the deviation of the actual trace from its average. 
Note that we impose conditions only on the function $\psi$, while $\gamma$ is unknown. 
For $t\rightarrow \infty$ the boundary data $\{\psi(x,t), Q(t)\}$ are assumed 
to be localized in the neighborhood of the time independent $\{\varphi(x), Q_s\}$. 

In the IBVP-II no assumptions on the well-boundary flux are made. 
Instead, we specify the Dirichlet condition $\gamma(t)+\psi(x,t)$ 
on the well boundary. In this case both  $\gamma$ and $\psi$ are known functions.
For $t\rightarrow \infty$ the boundary data $\{\psi(x,t), \gamma(t)\}$ 
are assumed to be localized in the neighborhood of functions $\{\varphi(x), -\frac{Q_s}{V}t\}$.

The main result for both IBVP-I and II is that the time dependent 
diffusive capacity asymptotically stabilizes in the neighborhood  
of  the diffusive capacity for the PSS regime associated with the pair 
$\varphi(x)$ and $Q_s$ (see Theorems~\ref{th:pi-pipss-0} and \ref{th:pi-pipss-dir}). 
The corresponding value for the steady state diffusive capacity  
can be calculated just by solving an auxiliary Dirichlet time independent BVP.

In \cite{ABI12} we obtained this result under the {\it degree condition}, 
stating that the degree of $g$-polynomial ${\rm deg}(g)\leq\frac{4}{n-2}$, 
where $n$ is the dimension of space. Mathematically this condition arises 
from the theory of Sobolev spaces and insures the continuous embedding 
$W^{1,2-a}(U)\subset L^2(U)$. While  for $n=2$ this constraint holds 
for any degree ${\rm deg}(g)$, for $n=3$ it holds only for ${\rm deg}(g)\le4$. 
In this paper we study the asymptotic behavior of the diffusive capacity 
without any constraint on  ${\rm deg}(g)$. In this case the classical 
Poincar\'{e}-Sobolev inequality doesn't hold, and we use weighted 
inequality for the mixed term $|\n p|^{2-a}|p|^{\ah-2}$, $\ah\neq2$. 
We obtain estimates for the bounds of the $L^\ah$-norm for both the 
difference between transient and PSS solutions, and the difference 
between transient and PSS solution time derivatives.


In Sec. \ref{sec:Gas} we discuss the concept of the diffusive capacity for an ideal gas flow and some results on its time dynamics. In this case the productivity index is defined according to \cite{ aziz-gas-transient, PI-gas} as
$J_G(t)=Q(t)/\overline{PDD}(t)$, where $\overline{PDD}(t)$ is the difference between the average of $p^2$ in the domain $U$ and on the boundary $\Gamma_i$.  In contrast to the case of slightly compressible flow, 
we show numerically that until certain critical time $T_{crit}$  the transient PI remains almost 
constant  and then  as time approaches  $T_{crit}$ it blows up. This result obtained 
on actual field data corresponds with the engineering observations. 
Time $T_{crit}$ depends on the initial reserves/initial data: the ``greater'' 
the initial reserves are the greater  $T_{crit}$ is.
Similar to our approach in case of slightly compressible fluid,
we use the auxiliary pressure $p_0(x,t)$, resulting in time independent PI to investigate 
the behavior of time dependent PI.
The $p_0(x,t)$ is the solution of the equation with positive function on the RHS. 
This function can be considered as fluid injection inside the reservoir. 
When gas reserves are considerably larger than pressure drawdown on the well, t
his source term is negligible, and the PI's for $p_0(x,t)$  is almost identical 
with the general time dependent PI. 
For linear Darcy case we obtain some analytical comparison theorems 
between the pressures for time $t<T_{crit}$. Under some constraints 
on the boundary data and smoothness of the pressure function we obtain 
a stability result for the PI with respect to the initial data.


The paper is organized as follows. In Sec.~\ref{sec:problem-state} we discuss 
the various aspects of nonlinear Forchheimer equations and associated parabolic equation. 
We give the definition of the diffusive capacity on the solution of this parabolic equation. 
The corresponding IBVP-I and II for the total flux and Dirichlet boundary conditions 
are introduced in Sec.~\ref{sec:ibvp-dc-shift}. 
Sec.~\ref{sec:flux} is devoted to the IBVP-I: we state the conditions 
\eqref{assump1} - \eqref{assump4} imposed on the boundary data to obtain 
the main result on the asymptotic convergence of the transient diffusive capacity 
(see Theorem~\ref{th:pi-pipss-0}).
In Lemma~\ref{lem:npps0} we specify the estimates on the pressure, 
its gradient and time derivative that are necessary  to prove this result. 
Sec.~\ref{sec:dirichlet} is devoted to the similar results for IBVP-II: 
we state the conditions \eqref{assumpD1} - \eqref{assumpD3-2} imposed 
on the boundary data to obtain the asymptotic convergence of 
the transient diffusive capacity (see Theorem~\ref{th:pi-pipss-dir}). 
Finally, in Sec.~\ref{sec:Gas} we discuss the concept of PI for an ideal gas flow.


\section{Problem statement and Preliminary Properties}\label{sec:problem-state}
Consider a fluid in a porous medium occupying a bounded domain $U\subset\mathbb{R}^n$. 
Let $x\in\mathbb{R}^n$, $n\geq 2$, be a spatial variable  and $t\in\mathbb{R}$ be a time variable. 
Let $u(x,t)\in\mathbb{R}^n$ be the fluid velocity  and $p(x,t)\in\mathbb{R}$ be the pressure. 

We consider a generalized Forchheimer equation 
\begin{equation}\label{eq:g-forch}
 g(|u|)u=-\nabla p,
\end{equation}
where $g:\mathbb{R}^+\to\mathbb{R}^+$. In particular we consider function $g$ to be the
generalized polynomial with non-negative coefficients. Namely
\begin{equation}\label{gdef}
  g(s)=a_0s^{\alpha_0}+a_1s^{\alpha_1}+\dots+a_ks^{\alpha_k}=a_0+\sum_{j=1}^k a_js^{\alpha_j},
\end{equation}
with  $k\ge 0$,  the real exponents satisfy $\alpha_0=0<\alpha_1<\alpha_2<\ldots<\alpha_k$, and the coefficients $a_0,a_1,\ldots, a_k>0$.
The largest exponent $\alpha_k$ is the degree of $g$ and is denoted by $\deg(g)$.

One can notice that the equation \eqref{eq:g-forch} with $g(s)$ defined as in \eqref{gdef} includes the linear Darcy's equation  and all classical forms of Forchheimer equations \cite{Forch} (for details see our previous works \cite{ABHI1,ABI12}).

In case of slightly compressible fluid we consider the case when the Degree Condition in (2.14), \cite{ABI12}, is not satisfied, namely 
\begin{equation}\label{ndeg-cond}
a=\frac{\ah_k}{\ah_k+1}\leq\frac4{2+n}\quad\Longleftrightarrow\quad\alpha_k=\deg(g)>\frac4{n-2}.
\end{equation}
In this case there is no continuous embedding $W^{1,2-a}(U)\subset L^2(U)$ and corresponding Poincar\'{e} inequality doesn't hold. 
Clearly, if $n=3$ condition \eqref{ndeg-cond} will hold for the nonlinearities $\alpha_k>4$.

From \eqref{eq:g-forch} one can obtain the  non-linear Darcy equation explicitly solved for the velocity $u$:
\begin{equation}\label{non-lin_darcy}
 u=-K(|\nabla p|)\nabla p,
\end{equation}
where 
\begin{equation}\label{Kg}
 K(\xi)=\frac1{g(G^{-1}(\xi))},\quad \xi\geq0,\quad  G(s)=sg(s), \quad s\geq 0.
\end{equation}

Along with \eqref{eq:g-forch}, which is considered as a momentum equation, the dynamical system is subject to the continuity equation 
\begin{equation}\label{eq:continuity}
 \frac{\partial \rho}{\partial t}+\nabla\cdot(\rho u)=0,
\end{equation}
where $\rho(x,t)\in\mathbb{R}^+$ is density of the fluid.
The fluid is considered to be slightly compressible, subject to the equation of state
\begin{equation}\label{eq:state}
 \frac{\partial\rho}{\partial p}=\gamma\rho\quad\text{or}\quad \rho(p)=\rho_0 e^{\gamma(p-p_0)},
\end{equation}
 where $\gamma\sim10^{-8}$ is the compressiblity constant, see \cite{Muskat}.  

Combining \eqref{eq:g-forch}, \eqref{eq:continuity} and \eqref{eq:state} we obtain the degenerate parabolic equation for pressure (see \cite{ABHI1} for details):
\begin{equation}\label{eq:p-1}
 \gamma\frac{\partial p}{\partial t}=\n\cdot(K(|\n p|)\n p).
\end{equation}
By scaling the time variable we can assume further on that $\gamma=1$.

In case of the flow of ideal gas  
the equation \eqref{eq:g-forch} for $n=2$ will take the form (see, for example,  \cite{PayneStraughan-cont-conv})                                      
\begin{align*}
 \alpha u + \beta\rho u|u|=-\nabla p,\quad \beta=F\Phi k^{-1/2},
\end{align*}
As in case of slightly compressible fluid, the above equation can be solved for $u$:
\begin{equation*}
 u=-\frac{2}{\alpha+\sqrt{\alpha^2+4\beta\rho|\nabla p|}}\nabla p.
\end{equation*}
Combining the equation above with the equation of state $\rho(p)=p$ and continuity equation \eqref{eq:continuity} we obtain (for the details see Sec.~\ref{sec:Gas}) 
\begin{equation}\label{eq:gas-intro}
 \frac{\partial p}{\partial t}=
 \nabla\cdot\left(\frac{2 p }{\alpha+\sqrt{\alpha^2+4\beta p|\nabla p|}}\nabla p\right).
\end{equation}

\vspace{0.2cm}

We will study equations \eqref{eq:p-1} and \eqref{eq:gas-intro} in  the open domain $U\subset\mathbb{R}^n$  with the $C^2$ boundary $\partial U=\Gamma=\Gamma_e\cup \Gamma_i$. The $\Gamma_e$ is considered to be external impermeable boundary of the reservoir with $u\cdot N=0$, where $N$ is the outward normal vector to $\Gamma_e$. The $\Gamma_i$ is  the internal boundary of the well with the total flux or Dirichlet boundary conditions.

To study the long term dynamics of the $g$-Forchheimer flow in the domain $U$ we introduce special capacity type functional. We call it the  Diffusive Capacity/ Productivity Index $($PI$)$
(see \cite{Dake, Raghavan, wolf-durl-aziz})
and define as 
\begin{equation}\label{def:pi}
J_g(t)=\frac{Q(t)}{PDD(t)}.
\end{equation}
where $Q(t)$ is the total flux through the well-boundary $\Gamma_i$ and $PDD(t)$ is called the pressure drawdown in the domain $U$.\\
In case of slightly compressible fluid these quantities are deifned as
\begin{align}
  &Q(t)=\int_{\Gamma_i}u\cdot N\,ds,\\
  &PDD(t)=\pbar_U(t)-\pbar_{\Gamma_i}(t)=\frac1{|U|}\int_Up(x,t)dx- \frac1{|\Gamma_i|}\int_{\Gamma_i}p(x,t)ds.\label{def:av-pressure} 
\end{align}
are the pressure averages in the domain and on the well-boundary correspondingly

For gas filtration in porous media these quantities are deifned as, see \cite{ aziz-gas-transient, PI-gas}:
\begin{align*}
&Q(t)=\int_{\Gamma_i} \rho u \cdot N \, ds,\\
&PDD(t)=\frac1{|U|}\int_U p^2\,dx-\frac1{|\Gamma_i|}\int_{\Gamma_i} p^2\,ds.
\end{align*}

\vspace{0.3cm}

We end this section with the recollection of some  properties of $K(\xi)$ that will be used  in this study (see \cite{ABHI1, ABI12}).
Function $K(\xi)$, $\xi\in[0,\infty)$ is decreasing and satisfies
\begin{align}
0&\geq K'(\xi)\geq -a\,\frac{K(\xi)}{\xi},\label{k-prime}\\
 C_0(1+\xi)^{-a}&\leq K(\xi)\leq C_1(1+\xi)^{-a}\leq C_1,\label{k-ineq}\\
  C_2\xi^{2-a}-1&\leq K(\xi)\xi^2\leq C_1\xi^{2-a}.\label{k-xi2}
\end{align}
Moreover the function $K(|y|)y$, $y\in{\mathbb R}^n$, is monotone, i.e. for any two functions
$u_1, u_2\in W^{2,1}(U)$, one has
\begin{multline}\label{ineq:monoton}
\int_U (K(|\n u_1|)\n u_1-K(|\n u_2|)\n u_2)\cdot\n(u_1-u_2)\,dx\\
\geq C_\Phi^{-1}\left(\int_{U}|\nabla(u_1-u_2)|^{2-a}\,dx\right)^{\frac2{2-a}},
\end{multline}
where 
\begin{equation}\label{def:c-phi}
C_\Phi=C_\Phi(u_1, u_2)=C_1(1+\max(\|\nabla u_1\|_{L^{2-a}(U)},\|\nabla u_2\|_{L^{2-a}(U)}))^{a}.
\end{equation}

 Following \cite{ABHI1, ABI12} we define the functional 
\begin{equation}\label{h-def}
H(\xi)=\int_0^{\xi^2}K(\sqrt s)\,ds\quad\text{for}\quad \xi\geq0.
\end{equation}
Function $H(\xi)$ can be compared with $K(\xi)$ (see \cite{ABHI1}) as follows:
\begin{equation}\label{h-k-ineq}
 K(\xi)\xi^2\leq H(\xi)\leq 2K(\xi)\xi^2.
\end{equation}
Combining \eqref{h-k-ineq} with \eqref{k-xi2} there exist constants $C_1$ and $C_2$ such that
\begin{equation}\label{h-xi-2-a}
C_1\xi^{2-a}-1\leq H(\xi)\leq C_2\xi^{2-a}.
\end{equation}



\section{Two types of boundary conditions and Pseudo-Steady State solution}\label{sec:ibvp-dc-shift}
In Sections~\ref{sec:ibvp-dc-shift}, \ref{sec:flux} and \ref{sec:dirichlet} we consider the case of slightly compressible fluid.

Let $U\subset\mathbb{R}^n$ be an open set  with the $C^2$ boundary $\partial U=\Gamma=\Gamma_e\cup \Gamma_i$.  The $\Gamma_e$ is considered to be external impermeable boundary of the reservoir and the $\Gamma_i$ is  the internal boundary of the reservoir (well surface). With respect to boundary $\Gamma_i$   two different problems will be considered: IBVP-I with imposed total flux (generalized Neumann condition)  and IBVP-II with imposed Dirichlet boundary condition.


{\bf I.} {\it IBVP-I for the total flux condition.}
The function $p(x,t)$ is a solution of the IBVP-I if it satisfies
\begin{align}
& \frac{\partial p}{\partial t}=\n\cdot(K(|\n p|)\n p)\quad\text{in}\quad D=U\times(0,\infty),\label{eq:p}\\
& -\int_{\Gamma_i}K(|\n p|)\n p\cdot N\,ds=Q(t)
\quad\text{on}\quad \G_i\times(0,\infty),\label{bc:flux}\\
&\left.\frac{\partial p}{\partial N}\right|_{\G_e}=0\quad\text{on}\quad \G_e\times(0,\infty),\label{bc:0-neuman}\\
&p(x,0)=p_0(x)\quad\text{in}\quad U.\label{initialcond}
\end{align}
Since the  IBVP-I \eqref{eq:p}-\eqref{initialcond} lacks uniqueness, following \cite{ABI12} we restrict the boundary data. In particular,  let the trace of $p(x,t)$ on well-boundary $\Gamma_i$  be of the form
\begin{equation}\label{trace-p}
 p(x,t)|_{\Gamma_i}=\gamma(t)+\psi(x,t),
\end{equation}
with 
\begin{equation}
 \int_{\Gamma_i}\psi(x,t)\,ds=0.
\end{equation}
Note, that function $\gamma(t)$ is not specified and will be determined by the flux $Q(t)$. Such restriction of the boundary trace ensures the uniqueness of the solution of IBVP \eqref{eq:p}-\eqref{initialcond} (see Remark 3.2 in \cite{ABI12}). 

Transient boundary data $\psi(x,t)$ and $Q(t)$ will be compared with the time-independent boundary data
 $\varphi(x)$ and $Q_s$.
As in \cite{ABI12} we consider functions $\Psi(x,t)$ and $\Phi(x)$ to be defined in the domain $U$ with the traces on $\Gamma_i$ being $\psi(x,t)$ and $\varphi(x)$ correspondingly.
The results are obtained under the constraints on the 
parameters of difference of boundary data
\begin{equation}\label{deviation-flux}
\Delta_Q(t)=Q(t)-Q_s;\quad \Delta_\Psi(x,t)=\Psi(x,t)-\Phi(x).
\end{equation}

\vspace{0.2cm}

{\bf II.} {\it IBVP-II for Dirichlet boundary condition.}
The function $p(x,t)$ is a solution of the IBVP-II if it satisfies \eqref{eq:p}, \eqref{bc:0-neuman}, \eqref{initialcond} with the condition on the well boundary
\begin{equation}\label{bc:dir}
 p(x,t)|_{\Gamma_i}=\psi(x,t)+\gamma(t) \quad\text{on}\quad \G_i\times(0,\infty),
\end{equation}
for known function $\psi(x,t)$. 

Transient boundary data $\psi(x,t)$ and $\gamma(t)$ will be compared with the boundary data
 $\varphi(x)$ and $-At$, $A=Q_s/|U|$, where $Q_s$ is constant flux on $\Gamma_i$. We again consider 
functions $\Psi(x,t)$ and $\Phi(x)$ to be the $W_2^1(U)$ extensions of $\psi(x,t)$ and $\varphi(x)$ on the domain $U$.
The results are obtained under the constraints on the  differences
\begin{equation}\label{deviation-dir}
\Delta_\Psi(x,t)=\Psi(x,t)-\Phi(x); \quad \D_\gamma(t)=\gamma(t)+At.
\end{equation}

For the existence and regularity theory of degenerate parabolic equations, see e.g.~\cite{LadyParaBook68, Lions, HI12anydegree} and references therein.

\subsection{PSS solution}
For the  time-independent boundary data  $\varphi(x)$, $Q_s$ we define the
 {\it  pseudo steady state $($PSS$)$} solution of IBVP for equation \eqref{eq:p}
%
\begin{equation}\label{pss-profile}
 p_s(x,t)=-At + W(x), \quad A=\frac{Q_s}{|U|}
\end{equation}
for a given function $\varphi(x)$ and a constant total flux $Q_s$ on the boundary $\Gamma_i$.
Function $W(x)$ is called the \textit{basic profile} corresponding to the flux $Q_s$ and is defined as a solution of BVP
\begin{align}
& -A=\nabla\cdot(K(|\nabla W|)\nabla W),\label{eq:w}\\
& W(x)|_{\Gamma_i}=\varphi(x),\label{bc:w}\\
&\frac{\partial W}{\partial N}\Big|_{\Gamma_e}=0.\label{bc:w-neum}
\end{align}
The existence and uniqueness of a solution   for the given pair   $\varphi(x)$, $Q_s$ is proved in \cite{ABI11}.

It is not difficult to see that in case of the  PSS solution when $p(x,t)=p_s(x,t)$  the functional $J_g(t)$ in \eqref{def:pi} is time independent and 
\begin{equation}\label{def:pi-pss}
 J_g(t)=J_{g,PSS}=\cfrac{Q_s}{\frac1{|U|}\int_UW(x)\,dx-
\frac1{|\Gamma_i|}\int_{\Gamma_i}\varphi(x)\,ds}=const.
\end{equation}
We assume that $\varphi(x)$ and $Q_s$ are  such that the denominator of \eqref{def:pi-pss} is not equal zero. Obviously PSS solution is the only solution for given $\varphi(x)$, $Q_s$ which obey both types of boundary conditions. Therefore PSS will serve us a reference solution for comparison for both IBVP-I and II.

\subsection{Time convergence of $J_g(t)\to J_{g,PSS}$ - general case}
Let $p(x,t)$ be the transient solution of IBVP-I \eqref{eq:p}-\eqref{initialcond} with boundary data $\Psi(x,t), Q(t)$
or IBVP-II \eqref{eq:p}, \eqref{bc:0-neuman}, \eqref{initialcond}, \eqref{bc:dir} with boundary data $\Psi(x,t)$.
 Let $J_g(t)$ be the corresponding diffusive capacity defined as in \eqref{def:pi}.
Let
$p_s(x,t)$ be the PSS solution of this IBVP with boundary data $\Phi(x), Q_s$ and the corresponding constant diffusive capacity $J_{g,PSS}$ defined as in \eqref{def:pi-pss}.

 For slightly compressible fluid flow subjected to Froschheimer equation the aim of this study can be formulated as follows 
\begin{theorem}\label{thrm:main-gen}
 There is a wide class  of transient boundary data such that for both problem {\rm IBVP-I} and  {\rm IBVP-II} 
\begin{equation*}
 J_g(t)-J_{g,PSS}\to 0 \quad\text{as}\quad t\to\infty.
\end{equation*}
\end{theorem}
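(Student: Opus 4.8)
The plan is to reduce the statement to a quantitative decay estimate for the difference between the transient pressure $p(x,t)$ and the PSS reference $p_s(x,t)=-At+W(x)$ of \eqref{pss-profile}, and then to transfer that decay to the functional $J_g$. Set $w=p-p_s$. Since $\partial_t p_s=-A$ and, by the basic-profile equation \eqref{eq:w}, $-A=\n\cdot(K(|\n p_s|)\n p_s)$ (recall $\n p_s=\n W$), subtracting \eqref{eq:w} from \eqref{eq:p} shows that $w$ solves
\begin{equation*}
\partial_t w=\n\cdot\bigl(K(|\n p|)\n p-K(|\n p_s|)\n p_s\bigr)\quad\text{in }D,
\end{equation*}
with zero Neumann data on $\G_e$ and, on $\G_i$, a trace governed by the deviations $\Delta_Q,\Delta_\Psi$ of \eqref{deviation-flux} (IBVP-I) or $\Delta_\Psi,\D_\gamma$ of \eqref{deviation-dir} (IBVP-II). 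I would then split
\begin{equation*}
J_g(t)-J_{g,PSS}=\frac{Q(t)}{PDD(t)}-\frac{Q_s}{PDD_{PSS}},
\end{equation*}
and observe that the linear-in-time part $-At$ of $p_s$ cancels in every pressure-drawdown difference, so that
\begin{equation*}
PDD(t)-PDD_{PSS}=\frac1{|U|}\int_U w\,dx-\frac1{|\G_i|}\int_{\G_i}w\,ds.
\end{equation*}
Thus it suffices to control the bulk average $\frac1{|U|}\int_U w\,dx$, the boundary average of $w$, and the flux deviation $\Delta_Q(t)$.

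The mechanism for the bulk estimate is an energy identity for $w$. The natural first step, carried out in \cite{ABI12}, is to test the $w$-equation against $w$ and integrate by parts: the interior term is exactly the left-hand side of the monotonicity inequality \eqref{ineq:monoton} and is therefore bounded below by $C_\Phi^{-1}\bigl(\int_U|\n w|^{2-a}\,dx\bigr)^{2/(2-a)}$, while the boundary contributions on $\G_i$ are controlled by $\Delta_Q$ and $\Delta_\Psi$. This is the content I would isolate in Lemma~\ref{lem:npps0}: estimates on $w$, on its gradient $\n w$, and on its time derivative $\partial_t w$. Under the degree condition the Poincar\'e--Sobolev embedding $W^{1,2-a}(U)\subset L^2(U)$ bounds this dissipative term below by a multiple of $\int_U w^2\,dx$, so that $\tfrac12\frac{d}{dt}\int_U w^2\,dx\le-c\int_U w^2\,dx+R(t)$ and a Gronwall argument yields decay.

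The essential new difficulty is that under \eqref{ndeg-cond} this embedding fails, so the quadratic energy cannot be closed. To circumvent it I would replace the Hilbertian energy by the $L^\ah$-energy $Y(t)=\int_U|w|^\ah\,dx$ with $\ah\neq2$ tuned to $a=\ah_k/(\ah_k+1)$, testing the $w$-equation against $|w|^{\ah-2}w$. The interior term then produces a mixed quantity of the type $|\n w|^{2-a}|w|^{\ah-2}$, for which the weighted Poincar\'e-type inequality for $|\n p|^{2-a}|p|^{\ah-2}$ remains valid even when \eqref{ndeg-cond} holds. Combining it with \eqref{k-xi2} and the comparison \eqref{h-xi-2-a} yields a closed differential inequality of the schematic form $Y'(t)\le-c\,Y(t)^{\theta}+R(t)$, whose forcing $R(t)$ is assembled from the boundary deviations and tends to $0$ because, by hypothesis, the transient data $\{\psi,Q\}$ (resp.\ $\{\psi,\gamma\}$) are localized near $\{\varphi,Q_s\}$. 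A comparison argument then forces $Y(t)\to0$, whence $\frac1{|U|}\int_U w\,dx\to0$ by H\"older's inequality.

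Finally I would assemble the pieces. The bulk decay $\frac1{|U|}\int_U w\,dx\to0$, together with the decay of the boundary average $\frac1{|\G_i|}\int_{\G_i}w\,ds$ (controlled in IBVP-II directly by $\D_\gamma$ and the prescribed trace, and in IBVP-I through the flux condition \eqref{bc:flux}, which pins down the unknown level $\gamma(t)$ in the split \eqref{trace-p}), gives $PDD(t)\to PDD_{PSS}$; combined with the flux convergence $Q(t)\to Q_s$ and the standing assumption $PDD_{PSS}\neq0$ from \eqref{def:pi-pss}, the quotient $J_g(t)=Q(t)/PDD(t)$ converges to $J_{g,PSS}$. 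The quantitative realizations of this scheme for the two regimes are Theorems~\ref{th:pi-pipss-0} and \ref{th:pi-pipss-dir}. The principal obstacle is the third step: proving the weighted mixed-term inequality and choosing $\ah$ so that the nonlinear dissipation dominates the forcing uniformly in $t$. This is where the structural bounds \eqref{k-prime}--\eqref{k-xi2} on $K$ are fully exploited, and it is genuinely delicate precisely because the natural energy is no longer quadratic.
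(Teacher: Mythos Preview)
Your outline has the right ingredients but misidentifies the mechanism, and one of your reductions actually fails.

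First, the reduction. You propose to show separately that $\frac{1}{|U|}\int_U w\,dx\to0$ and $\frac{1}{|\G_i|}\int_{\G_i}w\,ds\to0$. Neither holds in general. Integrating the $w$-equation over $U$ gives $\frac{d}{dt}\int_U w\,dx=-\Delta_Q(t)$, so $\int_U w\,dx=\int_U w(x,0)\,dx-\int_0^t\Delta_Q(\tau)\,d\tau$; even if $\Delta_Q\to0$ this average can drift to a nonzero constant or diverge. In IBVP-I the boundary average of $w$ equals $\Delta_\gamma(t)$ with $\gamma$ unknown, and there is no reason for it to vanish either. The paper avoids this (Lemma~\ref{lem:pi-diff}): one rewrites $\Delta_p(t)=\frac{1}{|\G_i|}\int_{\G_i}\zwave\,ds$ with $\zwave=w-\frac{1}{|U|}\int_U w\,dx$ of mean zero, and then trace plus Poincar\'e for mean-zero functions gives $|\Delta_p(t)|^{2-a}\le C\int_U|\n w|^{2-a}\,dx$. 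So the correct target is $\|\n w\|_{L^{2-a}}\to0$, not the two averages separately.

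Second, and more seriously, the claimed inequality $Y'(t)\le -cY(t)^\theta+R(t)$ with $R(t)\to0$ is false. When you test against $|w|^{\ah-2}w$ and unfold the interior term, the forcing inevitably contains $\|\n W\|_{L^b}^{\ah-a}$ (see \eqref{Ip-final}--\eqref{Iwp-final}) coming from the fixed PSS profile, plus the additive constant from \eqref{ineq:w-poincare-simple-0}. Hence only $\limsup_{t\to\infty} Y(t)\le C$ follows; the $L^\ah$-energy does \emph{not} decay to zero, and Theorem~\ref{th:zbar-to0} indeed asserts only boundedness.

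The actual engine of convergence, which you list in passing but do not deploy, is the time-derivative estimate $\int_U|p_t+A|^2\,dx\to0$ (Theorem~\ref{thr:pt+A-to0}). The paper combines it with the mere boundedness of $\int_U|z|^\ah$ via the monotonicity inequality in the form \eqref{n-p-ps-1}:
\[
\|\n(p-p_s)\|_{L^{2-a}}^2\le C_\Phi\int_U|p_t+A|\,|q-q_s|\,dx+\text{(boundary deviations)}.
\]
Cauchy--Schwarz on the first term, boundedness of $\|q-q_s\|_{L^2}$, and decay of $\|p_t+A\|_{L^2}$ then yield $\|\n(p-p_s)\|_{L^{2-a}}\to0$. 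Proving $\|p_t+A\|_{L^2}\to0$ is itself the delicate step: it requires the weighted Poincar\'e inequality \eqref{poincare-w-0} with weight $K(|\n p|)$, which in turn needs uniform bounds on $\int_U|\n p|^{2-a}$ (Theorem~\ref{thrm:bdd-h-gradp}) and on higher $L^\beta$-norms of $p_t$ (Theorem~\ref{thr:pt-bound-ah}). The mixed-term inequality you emphasize is used, but only to obtain the \emph{boundedness} of $\int|z|^\ah$ that feeds into \eqref{n-p-ps-1}, not to produce decay on its own.
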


The proof uses the following  Lemma which reduces the question of the  convergence of $J(t)$ to the convergence of the gradient of the solution in the appropriate norm and of the total fluxes to a corresponding constant values. 

\begin{lemma}\label{lem:pi-diff}
As $t\to\infty$ the difference between transient and PSS PI's
\begin{equation}\label{cond:j-js}
 J_g(t)-J_{g,PSS}\to 0\quad\text{if}\quad \|\nabla(p-p_s)\|_{L^{2-a}(U)}\to 0\quad\text{and}\quad \Delta_Q(t)\to 0.
\end{equation}
\end{lemma}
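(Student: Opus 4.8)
The plan is to reduce the statement to the continuity of the quotient $Q/PDD$, once the pressure drawdown is shown to converge. First I would set $w=p-p_s$ and compute the drawdown of the PSS solution directly: since $p_s(x,t)=-At+W(x)$ with $A=Q_s/|U|$ and $W|_{\G_i}=\varphi$, the linear‑in‑$t$ parts of the domain and boundary averages cancel, so $PDD_{PSS}=\frac1{|U|}\int_U W\,dx-\frac1{|\G_i|}\int_{\G_i}\varphi\,ds$ is constant and equals the denominator in \eqref{def:pi-pss}. Subtracting the two drawdowns, the $-At$ terms cancel once more and I obtain
\begin{equation*}
 PDD(t)-PDD_{PSS}=\frac1{|U|}\int_U w\,dx-\frac1{|\G_i|}\int_{\G_i}w\,ds.
\end{equation*}

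The structural point I would exploit is that the right‑hand side is invariant under shifting $w$ by a constant, so it depends on $w$ only through $\n w=\n(p-p_s)$. Writing $\overline{w}_U=\frac1{|U|}\int_U w\,dx$ and inserting it into the boundary integral gives $PDD(t)-PDD_{PSS}=-\frac1{|\G_i|}\int_{\G_i}(w-\overline{w}_U)\,ds$. Since $w-\overline{w}_U$ has zero mean over $U$, I would bound its boundary integral by H\"older on $\G_i$, the trace embedding $W^{1,2-a}(U)\hookrightarrow L^{2-a}(\G_i)$, and the Poincar\'e--Wirtinger inequality, in the chain
\begin{equation*}
 \Bigl|\int_{\G_i}(w-\overline{w}_U)\,ds\Bigr|\le C\|w-\overline{w}_U\|_{L^{2-a}(\G_i)}\le C\|w-\overline{w}_U\|_{W^{1,2-a}(U)}\le C\|\n w\|_{L^{2-a}(U)},
\end{equation*}
so that $|PDD(t)-PDD_{PSS}|\le C\|\n(p-p_s)\|_{L^{2-a}(U)}\to0$ under the first hypothesis.

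Finally I would write $Q(t)=Q_s+\Delta_Q(t)$ and $PDD(t)=PDD_{PSS}+\delta(t)$ with $\Delta_Q(t)\to0$ and $\delta(t)=PDD(t)-PDD_{PSS}\to0$. Under the standing assumption that $PDD_{PSS}\neq0$, the drawdown $PDD(t)$ stays bounded away from zero for large $t$, and a single algebraic manipulation yields
\begin{equation*}
 J_g(t)-J_{g,PSS}=\frac{\Delta_Q(t)\,PDD_{PSS}-Q_s\,\delta(t)}{PDD(t)\,PDD_{PSS}},
\end{equation*}
whose numerator tends to $0$ and whose denominator tends to $PDD_{PSS}^2\neq0$; this gives $J_g(t)-J_{g,PSS}\to0$ and closes the argument.

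The only non‑bookkeeping ingredient is the Poincar\'e--trace estimate at integrability exponent $2-a$, and the main point to check is that it survives the failure of the degree condition \eqref{ndeg-cond}. The obstruction there is the absence of the embedding $W^{1,2-a}(U)\subset L^2(U)$; however, the present argument never requires $L^2$ control of the trace, only $L^{2-a}$ (indeed $L^1$) control, and the first‑order Poincar\'e--Wirtinger and trace inequalities hold for every exponent $2-a>1$ on the $C^2$ domain $U$. Thus the drawdown estimate goes through unchanged, and the remaining work is the purely algebraic passage from the convergence of $PDD(t)$ and $Q(t)$ to that of the quotient, where the only thing to guard against is degeneration of the denominator, handled by the hypothesis $PDD_{PSS}\neq0$.
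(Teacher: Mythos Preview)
Your proof is correct and follows essentially the same approach as the paper: you introduce the zero-mean shift $w-\overline{w}_U$ (the paper's $\zwave$), estimate its boundary trace via the trace theorem and Poincar\'e--Wirtinger inequality in $L^{2-a}$, and finish with an algebraic manipulation of the quotient. The only cosmetic difference is in the algebra: the paper adds and subtracts $Q(t)\,PDD(t)$ in the numerator to get a $Q(t)\,\Delta_p(t)$ term, whereas you add and subtract $Q_s\,PDD_{PSS}$ to get a $Q_s\,\delta(t)$ term; your version is slightly cleaner since it avoids the implicit need to observe $Q(t)$ is bounded.
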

\begin{proof}
From \eqref{def:pi} and \eqref{def:pi-pss} it follows
\begin{align}\label{j-jpss}
J_g(t)-J_{g,PSS}&= \frac{Q(t)}{\overline p_U-\overline p_{\Gamma_i}}- \frac{Q_s}{\overline p_{sU}-\overline p_{s{\Gamma_i}}}=
\frac{Q(t)(\overline p_{sU}-\overline p_{s{\Gamma_i}})-Q_s(\overline p_U-\overline p_{\Gamma_i})}{(\overline p_U-\overline p_{\Gamma_i})(\overline p_{sU}-\overline p_{s{\Gamma_i}})}\notag\\
&=\frac{Q(t)[(\overline p_{sU}-\overline p_{s{\Gamma_i}})-(\overline p_U-\overline p_{\Gamma_i})]+\Delta_Q(t)(\overline p_U-\overline p_{\Gamma_i})}{(\overline p_U-\overline p_{\Gamma_i})(\overline p_{sU}-\overline p_{s{\Gamma_i}})}\notag\\
&=\frac{Q(t)\cdot\Delta_p(t)}{(\overline p_U-\overline p_{\Gamma_i})(\overline p_{sU}-\overline p_{s{\Gamma_i}})}+
\Delta_Q(t)\,\frac{J_{g,PSS}}{Q_s}.
\end{align}
Here $\Delta_p(t)=(\pbar_{sU}-\pbar_{s\Gamma_i})-(\pbar_U-\pbar_{\Gamma_i})$
is the difference of pressure drawdowns for transient and PSS regimes.

The pressure drawdown for the PSS solution is time independent, and is not equal zero.
Hence $J_g(t)-J_{g,PSS}\to 0$ if and only if both $\Delta_p(t)\to 0$ and $\Delta_Q(t)\to 0$ as $t\to\infty$.

The difference in pressure drawdowns $\D_p(t)$ can be written as
\begin{align*}
 \Delta_p(t)=&
\ \frac{1}{|\Gamma_i|}\int_{\Gamma_i}(p-p_s)\,ds-\frac{1}{|U|}\int_U (p-p_s)\,dx\\
=&\ \frac{1}{|\Gamma_i|}\int_{\Gamma_i}\left(p-p_s-\frac1{|U|}\int_U(p-p_s)\,dx\right)\,ds\\
&-\frac{1}{|U|}\int_U \left(p-p_s-\frac1{|U|}\int_U(p-p_s)\,dx\right)\,dx
=\frac{1}{|\Gamma_i|}\int_{\Gamma_i}\zwave\,ds,
\end{align*}
where
\begin{equation*}
\zwave(x,t)=p-p_s-\frac1{|U|}\int_U(p-p_s)\,dx.
\end{equation*}

Applying Trace theorem  and Poincar\'{e} inequality, as $\int_U\zwave\,dx=0$, one has
\begin{align}
| \Delta_p(t)&|^{2-a}\leq 
C\int_{\Gamma_i}|\zwave|^{2-a}\,ds\leq
C\int_{U}|\nabla\zwave|^{2-a}\,dx=C\int_{U}|\nabla(p-p_s)|^{2-a}\,dx.\notag
\end{align}
The condition \eqref{cond:j-js} follows.
\end{proof}

 The results for the solution of IBVP-I will be obtained  under the conditions on the deviation of the boundary data \eqref{deviation-flux} in Sec.~\ref{sec:flux}, and for the solution of IBVP-II in terms of deviations \eqref{deviation-dir} in Sec.~\ref{sec:dirichlet}.



\section{IBVP-I: Asymptotic convergence of Diffusive Capacity}\label{sec:flux}

In this  section we prove the Theorem~\ref{thrm:main-gen} on asymptotic convergence of transient diffusive capacity $J_g(t)$ defined on the solution of IBVP-I \eqref{eq:p}-\eqref{initialcond} 
with boundary data $\Psi(x,t), Q(t)$.
Let
$p_s(x,t)$ be the PSS solution of this IBVP with boundary data $\Phi(x), Q_s$ and the corresponding constant diffusive capacity $J_{g,PSS}$.

\subsection{Assumptions on boundary data}

In order to formulate the assumptions in the Theorem~\ref{thrm:main-gen} we first introduce the following notations. 


Suppose $\ah\geq \frac{na}{2-a}>2$ and let  $2-a<b=\frac{\ah(2-a)}{2}<\ah$. We define
\begin{align}
 &F_1(t)=\frac1{|U|}\int_0^t\Delta_Q(\tau)\,d\tau+\frac1{|U|}\int_U\Delta_\Psi\,dx-\frac1{|U|}\int_U(p(x,0)-p_s(x,0))\,dx.\label{def:f1}\\
&F_2(t)=1+\|\nabla W\|_{L^{2-a}(U)}+|F_1(t)|+\int_U|\nabla(\Delta_\Psi)|\,dx+|\Delta_Q(t)|^{\frac{1}{1-a}}.\label{def:f2}
\end{align}
\begin{align}
&A_1(\ah,t)=\,\|\n W\|_{L^b}^{\ah-a}+\|\n(\D_\Psi)\|_{L^b}^{\ah-a}
+\bigg|\Delta_Q(t)+\int_U\Psi_t\,dx\bigg|^\ah\label{def:a1}\\
&\hspace{1.3cm}+\left(\int_U|\Psi_t|^\ah\,dx\right)^
{\frac{\ah-a}{\ah(1-a)}}+\D_Q^{\frac{\ah-a}{1-a}}(t).\notag\\
 &A_1(\ah)=\limsup\limits_{t\to\infty}A_1(\ah,t)^{\frac{\ah}{\ah-a}}.\label{def:limsupa1}
\end{align}
\begin{align}\label{def:a2}
A_{2}(t)= &\int_U|\nabla W|^{2-a}\,dx+\int_U|\nabla\Psi_t|^2\,dx+
\int_U|\nabla(\Delta_\Psi)|^{2-a}\,dx\\
&+\left(\int_U|\Psi_t|\,dx\right)^{2-a}+\int_U|\Psi_t|^2\,dx+\int_U|\Psi_t|^b\,dx\notag\\
&+|\Delta_Q(t)|+|Q'(t)|+|\Delta_Q(t)|^b+|Q'(t)|^b+C|F_1(t)|^{2-a}.\notag
\end{align}
\begin{equation}\label{def:a3}
 A_3(\e,t)=\int_U|\nabla\Psi_t|^2dx+\tfrac1{4\e}\int_U|\Psi_{tt}|^2\,dx+|Q'(t)|^2+|F'_1(t)|^2.
\end{equation}

The certain way the conditions A1-A5 will be formulated is dictated by the fact, that while the main result will be obtained under all the assumptions below, some intermidiate inequalities require less restrictive constraints.
As a note for each assumption we state the resulting bounds on parameters $F_1$, $F_2$, $A_1$, $A_2$ and $A_3$.\\

{\bf Assumptions 1 (A1):} {\small ($A_1(\ah)\leq C$)}
\begin{align} \label{assump1}
\limsup\limits_{t\to\infty}\Big(|\Delta_Q(t)|+ \|\Psi_t\|_{L^{b}}+
\|\n\Psi\|_{L^{b}}+\|\n\Phi\|_{L^{b}}\Big)\leq C.
\end{align}

{\bf Assumptions 2 (A2):} {\small ($A_1(\ah)+\limsup\limits_{t\to\infty}(A_2(t)+F_2(t))\leq C$)}
\begin{equation}\label{assump2}
\begin{cases}
 & \text{Assumptions $A1$; }\\
 &\limsup\limits_{t\to\infty}\Big(|Q'(t)|+|F_1(t)|+\|\nabla\Psi_t\|_{L^2}\Big)\leq C.
\end{cases} 
\end{equation}


{\bf Assumptions 3-$\beta$ (A3-$\beta$):} for any $\beta\geq2$
\begin{equation}\label{assump3-beta}
|\Delta'_{\gamma}(t)|+|\Delta_Q(t)|+\int_1^\infty \|\Psi_{tt}\|_{L^{\beta}(U)}+\|\nabla\Psi_t\|_{L^{\beta}(U)}^2\,dt
\leq C.
\end{equation}

{\bf Assumptions 4 (A4):} {\small $\limsup\limits_{t\to\infty}A_3(t)=0$ }
\begin{equation}\label{assump4}
\begin{cases}
& \text{Assumptions $A2$; }\\ 
&\limsup\limits_{t\to\infty}
\Big(\int_U(|\nabla\Psi_t|^2+|\Psi_{tt}|^2)\,dx+|Q'(t)|^2+\D^2_Q(t)\Big)=0.
\end{cases}
\end{equation}

{\bf Assumptions 5 (A5):}
\begin{equation}\label{assump5}
\begin{cases}
& \text{Assumptions $A3$ and $A4$; }\\ 
&\limsup\limits_{t\to\infty}\|\nabla(\Delta_\Psi)\|_{L^2}=0.
\end{cases}
\end{equation}


%
%
%
%
%

Along with the solution of IBVP-I $p(x,t)$ and the PSS solution $p_s(x,t)$ of IBVP-I with boundary data $\Phi(x), Q_s$, we will use the following shifts of the solutions:
\begin{align}
 &q(x,t)=p(x,t)-\frac1{|U|}\int_U p(x,t)\,dx-\left(\Psi(x,t)-\frac1{|U|}\int_U\Psi(x,t)\,dx\right);\label{def:q}\\
 &q_s(x)=p_s(x,t)-\frac1{|U|}\int_U p_s(x,t)\,dx-\left(\Phi(x)-\frac1{|U|}\int_U\Phi(x)\,dx\right).\label{def:qs}
\end{align}
It follows that $q$ satisfies the BVP
\begin{align}
 &q_t(x,t)=p_t(x,t)+\frac1{|U|}Q(t)-\left(\Psi_t(x,t)-\frac1{|U|}\int_U\Psi_t(x,t)\,dx\right)\label{eq:q},\\
 &q|_{\Gamma_i}=\gamma(t)-\frac1{|U|}\int_Up(x,0)dx+\frac1{|U|}\int_0^tQ(\tau)d\tau+\frac1{|U|}\int_U\Psi(x,t)\,dx.\label{bc:q}
\end{align}
Similarly $q_s$ satisfies the BVP
\begin{align}
 &\frac{\partial q_s}{\partial t}(x,t)=\frac{\partial p_s}{\partial t}(x,t)+A=0,\label{eq:qs}\\
 & q_s|_{\Gamma_i}=-\frac1{|U|}\int_U (p_s(x,0)-\Phi(x))\,dx.\label{bc:qs}
\end{align}
Note that
\begin{equation}\label{intq=0}
 \int_Uq(x,t)\,dx=0 \quad\quad\text{and}\quad\quad  \int_Uq_s(x,t)\,dx=0.
 \end{equation}

\vspace{0.2cm}

According to Lemma~\ref{lem:pi-diff} we need the convergence $\Delta_Q(t)\to 0$ and  $\|\nabla(p-p_s)\|_{L^{2-a}}\to 0$ as $t\to\infty$. The affinity of $Q(t)$ and $Q_s$ will be imposed as a condition on the boundary data (see Assumptions A4). The estimates necessary to prove the convergence of $\|\nabla(p-p_s)\|_{L^{2-a}}$ are outlined in the following Lemma. The references to the corresponding results  further in the paper are given.

\begin{lemma}\label{lem:npps0}
Let $q$ and $q_s$ be defined as in \eqref{def:q} and \eqref{def:qs}. Then
\begin{equation*}
\|\nabla(p-p_s)\|_{L^{2-a}} \to 0 \quad\text{as}\quad t\to\infty
\end{equation*}
 if
\begin{align}\label{cond:someto0}
 \lim_{t\to\infty}\left(\|\nabla(\Delta_\Psi)\|_{L^2}+|\D_Q(t)||F_1(t)+1|\right)=0
\end{align}
and 
\begin{align}
&1.\ \int_U|q-q_s|^2\,dx\leq C\int_U|q-q_s|^{\ah}\,dx\leq C, \quad \text{see Theorem~\ref{th:zbar-to0}};\label{ineq:zah-bound}\\
&2. \ \int_U|\n p|^{2-a}\,dx\leq C, \quad \text{see Theorem~\ref{thrm:bdd-h-gradp}}; \label{ineq:np-bound}\\
&3.\ \int_U|p_t+A|^2\,dx=\int_U|p_t-p_{s,t}|^2\,dx\to0,\quad \text{see Theorem \ref{thr:pt+A-to0}};\label{ineq:pta-0}\\
&4. \ \int_U|\n p_s|^{2-a}\,dx=\int_U|\n W|^{2-a}\,dx\leq C,\quad \text{see Lemma 4.1, \cite{ABI12}}.\label{ineq:nps-bound}
\end{align}
\end{lemma}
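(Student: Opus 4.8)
The plan is to convert the goal into an estimate for the \emph{flux pairing}
\[
I(t)=\int_U\big(K(|\n p|)\n p-K(|\n p_s|)\n p_s\big)\cdot\n(p-p_s)\,dx,
\]
and to show $I(t)\to0$. Writing $\mathbf v=K(|\n p|)\n p-K(|\n p_s|)\n p_s$, the monotonicity inequality \eqref{ineq:monoton} applied with $u_1=p$, $u_2=p_s$ gives
\[
\Big(\int_U|\n(p-p_s)|^{2-a}\,dx\Big)^{\frac2{2-a}}\le C_\Phi\,I(t),
\]
so it suffices to bound $I(t)$. The constant $C_\Phi$ in \eqref{def:c-phi} depends only on $\|\n p\|_{L^{2-a}}$ and $\|\n p_s\|_{L^{2-a}}$, both uniformly bounded by \eqref{ineq:np-bound} and \eqref{ineq:nps-bound}; hence $C_\Phi\le C$ and the problem reduces to proving $I(t)\to0$.

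Next I would split the test gradient using the shifts. Since $\n(p-p_s)=\n(q-q_s)+\n(\Delta_\Psi)$ by \eqref{def:q}--\eqref{def:qs}, write $I=I_1+I_2$ with $I_2=\int_U\mathbf v\cdot\n(\Delta_\Psi)\,dx$. For $I_2$, Hölder's inequality gives $|I_2|\le\|\mathbf v\|_{L^2}\|\n(\Delta_\Psi)\|_{L^2}$; the pointwise bound $|\mathbf v|\le C(|\n p|^{1-a}+|\n p_s|^{1-a})$ furnished by \eqref{k-xi2}, together with \eqref{ineq:np-bound} and \eqref{ineq:nps-bound}, yields $\|\mathbf v\|_{L^2}\le C$, while $\|\n(\Delta_\Psi)\|_{L^2}\to0$ by \eqref{cond:someto0}; thus $I_2\to0$. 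For $I_1=\int_U\mathbf v\cdot\n(q-q_s)\,dx$ I would integrate by parts. Because $p$ solves \eqref{eq:p} and $p_s$ solves \eqref{eq:w}, one has $\n\cdot\mathbf v=p_t+A$, and $\mathbf v\cdot N=0$ on $\Gamma_e$ by \eqref{bc:0-neuman} and \eqref{bc:w-neum}, so
\[
I_1=\int_{\Gamma_i}(q-q_s)\,\mathbf v\cdot N\,ds-\int_U(q-q_s)(p_t+A)\,dx.
\]

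It remains to show both terms vanish. The volume term is controlled by Cauchy--Schwarz, $\big|\int_U(q-q_s)(p_t+A)\,dx\big|\le\|q-q_s\|_{L^2}\|p_t+A\|_{L^2}$, where $\|q-q_s\|_{L^2}\le C$ by \eqref{ineq:zah-bound} and $\|p_t+A\|_{L^2}\to0$ by \eqref{ineq:pta-0}. The boundary term is where the real work lies: by \eqref{bc:q} and \eqref{bc:qs} the trace of $q-q_s$ on $\Gamma_i$ is \emph{independent of} $x$, say equal to $\kappa(t)$, so $\int_{\Gamma_i}(q-q_s)\mathbf v\cdot N\,ds=\kappa(t)\int_{\Gamma_i}\mathbf v\cdot N\,ds=-\kappa(t)\Delta_Q(t)$, using $\int_{\Gamma_i}\mathbf v\cdot N\,ds=-\Delta_Q(t)$ from \eqref{bc:flux}. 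A short computation identifies $\kappa(t)=\Delta_\gamma(t)+F_1(t)$ with $F_1$ as in \eqref{def:f1}; since $\Delta_Q(t)\to0$ and $|\Delta_Q(t)||F_1(t)+1|\to0$ by \eqref{cond:someto0}, together with the uniform $W^{1,2-a}(U)$ bound on $q-q_s$ coming from \eqref{ineq:zah-bound}, \eqref{ineq:np-bound} and \eqref{ineq:nps-bound} which forces $\kappa(t)$ to remain bounded via the Trace theorem, the product $\kappa(t)\Delta_Q(t)\to0$. Combining, $I_1\to0$ and $I_2\to0$, hence $I(t)\to0$ and the lemma follows.

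I expect the boundary contribution to be the main obstacle: it is even finite only because the shifts \eqref{def:q}--\eqref{def:qs} were engineered to have spatially constant traces on $\Gamma_i$, and its decay rests on pairing the smallness of $\Delta_Q(t)$ against the boundedness of $\kappa(t)$. The delicate supporting ingredient is the $L^\ah$ estimate \eqref{ineq:zah-bound}, which substitutes for the failed Poincar\'{e}--Sobolev embedding $W^{1,2-a}(U)\subset L^2(U)$; it is precisely what keeps $\|q-q_s\|_{L^2}$ and the trace of $q-q_s$ under control once the constraint on $\deg(g)$ is dropped.
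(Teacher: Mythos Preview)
Your argument is correct and follows the same route as the paper. The paper's proof simply quotes inequality (5.20) from \cite{ABI12}, which packages exactly the monotonicity bound, the splitting $\nabla(p-p_s)=\nabla(q-q_s)+\nabla(\Delta_\Psi)$, and the integration by parts you carry out explicitly; the only cosmetic difference is that in \cite{ABI12} the boundary contribution $\kappa(t)\Delta_Q(t)=(\Delta_\gamma(t)+F_1(t))\Delta_Q(t)$ is processed by estimating $|\Delta_\gamma(t)|$ separately (cf.\ Lemma~4.4 there), yielding the terms $\Delta_Q^2(t)+|F_1(t)\Delta_Q(t)|$, whereas you bound $\kappa(t)$ directly via the trace theorem applied to $q-q_s\in W^{1,2-a}(U)$, which is arguably cleaner.
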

\begin{proof}
According to the inequality
(5.20) in proof of  Theorem~5.6 in \cite{ABI12} we have:
\begin{align}\label{n-p-ps-1}
&\|\nabla(p-p_s)\|_{L^{2-a}}^2\leq C_\Phi\int_U|p_t+A||q-q_s|\,dx \\
&\ \ \ +C_\Phi\left(\int_U|\nabla(\Delta_\Psi)|^2\,dx+\Delta^2_Q(t)+|F_1(t)\D_Q(t)|+\|\nabla(\Delta_\Psi)\|_{L^2}\right).\notag
\end{align}
Here $C_\Phi=C_\Phi(p, W)$ is as in \eqref{def:c-phi} and is bounded by virtue of  \eqref{ineq:np-bound}
and \eqref{ineq:nps-bound}.
It is then clear 
that the RHS of \eqref{n-p-ps-1} converges to 0 under conditions \eqref{cond:someto0}-\eqref{ineq:nps-bound}. 
\end{proof}




\subsection{Bounds for the solutions}
 As before let $q(x,t)$ be the solution of BVP \eqref{eq:q}, \eqref{bc:q} and $q_s(x,t)$ be the solution of PSS BVP \eqref{eq:qs}, \eqref{bc:qs}. Let $z=q-q_s$. We will prove that under certain condition $\int_U|z|^\ah\,dx$ is bounded at time infinity. 

In order to obtain the suitable differential inequality we will use the following result (see Lemma 2.3, \cite{HI12anydegree})
\begin{lemma}
Let $U$ be an open bounded domain in ${\mathbb R}^n$ and $\ah\geq\frac{na}{2-a}$. If $u|_{\partial U}=0$, there exists constant $C$ such that
\begin{align}\label{ineq:w-poincare-simple-0}
\int_UK(|\nabla u|)|\nabla u|^{2}|u|^{\alpha-2}\,dx&\geq C\int_U|\nabla u|^{2-a}|u|^{\alpha-2}-C\,dx\notag\\
&\geq C\left(\int_U|u|^\alpha\,dx\right)^{\frac1{\gamma_0}}-C,
\end{align}
where $\gamma_0=\alpha/(\alpha-a)$. 
\end{lemma}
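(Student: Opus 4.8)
The plan is to prove the two displayed inequalities in turn: the first is essentially pointwise and rests on the structural bound \eqref{k-xi2}, while the second is a weighted Sobolev--Poincar\'e inequality that is the real content.

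For the first inequality I would invoke the lower bound in \eqref{k-xi2}, $K(\xi)\xi^2\geq C_2\xi^{2-a}-1$, with $\xi=|\nabla u|$, and multiply by the nonnegative weight $|u|^{\alpha-2}$ to obtain the pointwise estimate
\[
K(|\nabla u|)|\nabla u|^2|u|^{\alpha-2}\geq C_2|\nabla u|^{2-a}|u|^{\alpha-2}-|u|^{\alpha-2}.
\]
Integrating over $U$ reduces everything to controlling the lower-order term $\int_U|u|^{\alpha-2}\,dx$. Since $\alpha>2$, H\"older's inequality on the bounded domain gives $\int_U|u|^{\alpha-2}\,dx\leq|U|^{2/\alpha}\big(\int_U|u|^\alpha\,dx\big)^{(\alpha-2)/\alpha}$, and the crucial point is that $(\alpha-2)/\alpha<1/\gamma_0=(\alpha-a)/\alpha$ precisely because $a<2$. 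Thus, once the second inequality is available, Young's inequality lets me absorb this term into an arbitrarily small multiple of $\big(\int_U|u|^\alpha\,dx\big)^{1/\gamma_0}$ (equivalently of $\int_U|\nabla u|^{2-a}|u|^{\alpha-2}\,dx$) plus a constant, which is the source of the $-C$.

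The heart of the argument is the weighted inequality $\int_U|\nabla u|^{2-a}|u|^{\alpha-2}\,dx\geq C\big(\int_U|u|^\alpha\,dx\big)^{1/\gamma_0}$. I would set $w=|u|^\beta$ with $\beta=\frac{\alpha-a}{2-a}$, chosen so that $(\beta-1)(2-a)=\alpha-2$; then $|\nabla w|=\beta|u|^{\beta-1}|\nabla u|$ and $\|\nabla w\|_{L^{2-a}(U)}^{2-a}=\beta^{2-a}\int_U|\nabla u|^{2-a}|u|^{\alpha-2}\,dx$. Because $\alpha>2$ forces $\beta>1$ and $u|_{\partial U}=0$, the function $w$ vanishes on $\partial U$, so the Gagliardo--Nirenberg--Sobolev inequality in $W^{1,2-a}_0(U)$ --- applicable since $2-a<2\leq n$ --- yields $\|w\|_{L^{p^*}(U)}\leq C\|\nabla w\|_{L^{2-a}(U)}$ with $p^*=\frac{n(2-a)}{n-2+a}$. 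Rewriting in terms of $u$, this reads $\big(\int_U|u|^{\beta p^*}\,dx\big)^{(2-a)/p^*}\leq C\int_U|\nabla u|^{2-a}|u|^{\alpha-2}\,dx$. A direct computation shows that $\beta p^*\geq\alpha$ is equivalent to $\alpha\geq\frac{na}{2-a}$, exactly our hypothesis; hence on the bounded domain the embedding $L^{\beta p^*}(U)\hookrightarrow L^{\alpha}(U)$ lets me replace $\beta p^*$ by $\alpha$ at the cost of a constant, and since $\beta(2-a)=\alpha-a$ the exponent becomes $\beta(2-a)/\alpha=1/\gamma_0$, giving the claim.

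The main obstacle is this core step, and within it the recognition that the degree hypothesis $\alpha\geq\frac{na}{2-a}$ is precisely what guarantees $\beta p^*\geq\alpha$, so that the domain embedding $L^{\beta p^*}\hookrightarrow L^{\alpha}$ is at my disposal (at $\alpha=\frac{na}{2-a}$ the Sobolev conclusion already lands on $L^\alpha$). A secondary technical point is justifying the differentiation of $w=|u|^\beta$ and the use of the Sobolev inequality; I would first prove the estimate for smooth $u$ vanishing on $\partial U$ and then pass to the general case by density, with $\beta>1$ ensuring that $|u|^\beta$ remains admissible across the zero set of $u$.
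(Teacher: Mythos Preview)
The paper does not prove this lemma; it merely records it with a reference to Lemma~2.3 of \cite{HI12anydegree} and then uses the conclusion. Your argument is correct and is the natural one (and almost certainly what the cited reference does): the pointwise bound $K(\xi)\xi^2\geq C\xi^{2-a}-1$ from \eqref{k-xi2} handles the first step, and the substitution $w=|u|^{\beta}$ with $\beta=(\alpha-a)/(2-a)$ followed by the Sobolev inequality in $W^{1,2-a}_0(U)$ yields the weighted Poincar\'e estimate. Your verification that $\beta p^*\geq\alpha$ is equivalent to $\alpha\geq na/(2-a)$ pinpoints exactly why that hypothesis enters, and the absorption of $\int_U|u|^{\alpha-2}\,dx$ via Young's inequality using $(\alpha-2)/\alpha<1/\gamma_0$ is the right way to produce the additive constant in the first inequality.
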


We first prove the following lemma to estimate the boundary data. 

\begin{lemma}\label{lemma:bdd-for-q-qs}
Let $F_1(t)$ be as in \eqref{def:f1}, $\Delta_Q=Q(t)-Q_s$ and $\Delta_\gamma=\gamma(t)+At$.
Then for $\alpha\geq \frac{na}{2-a}$
 \begin{align}
  |\Delta_\gamma(t)+&F_1(t)|^{\ah-1}|\Delta_Q(t)|\\
&\leq \e_1 \left(\int_{U}|z|^{\ah}\,dx\right)^{\frac{1}{\g_0}}
+ \e_2 \int_{U}|\n z |^{2-a}|z|^{\ah-2}\,dx
+C\D_Q^{\frac{\ah-a}{1-a}}\notag
\end{align}
with $\gamma_0=\alpha/(\alpha-a)$.
\end{lemma}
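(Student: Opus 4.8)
The plan is to recognize that the quantity $\D_\gamma(t)+F_1(t)$ is nothing but the spatially constant trace of $z=q-q_s$ on $\G_i$, and then to estimate this single number by the two interior integrals appearing on the right, peeling off the pure data term $\D_Q^{(\ah-a)/(1-a)}$ by Young's inequality. First I would verify the trace identity
\begin{equation*}
 z|_{\G_i}=\D_\gamma(t)+F_1(t)=:c(t).
\end{equation*}
Subtracting \eqref{bc:qs} from \eqref{bc:q}, writing $Q=Q_s+\D_Q$ and using $A=Q_s/|U|$ to replace $\frac1{|U|}\int_0^tQ\,d\tau$ by $At+\frac1{|U|}\int_0^t\D_Q\,d\tau$, the surviving terms are precisely $\frac1{|U|}\int_0^t\D_Q\,d\tau+\frac1{|U|}\int_U\D_\Psi\,dx-\frac1{|U|}\int_U(p(x,0)-p_s(x,0))\,dx=F_1(t)$ together with $\g(t)+At=\D_\gamma(t)$. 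Since none of these depends on the point of $\G_i$, $c(t)$ is constant along $\G_i$.

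Next, since $0<a<1<\ah$, the exponents $\frac{\ah-a}{\ah-1}$ and $\frac{\ah-a}{1-a}$ are conjugate, so Young's inequality gives $|c|^{\ah-1}|\D_Q|\leq\e\,|c|^{\ah-a}+C\,\D_Q^{\frac{\ah-a}{1-a}}$, which already produces the last term of the claim. It then remains to control $|c|^{\ah-a}$ by the two interior integrals. Because $c$ is the constant value of $z$ on $\G_i$, one has $|c|^{\ah-a}|\G_i|=\int_{\G_i}|z|^{\ah-a}\,ds$, and the trace inequality $W^{1,1}(U)\hookrightarrow L^1(\partial U)$ applied to $|z|^{\ah-a}$ yields
\begin{equation*}
 |c|^{\ah-a}\leq C\int_U|z|^{\ah-a}\,dx+C\int_U|z|^{\ah-a-1}|\n z|\,dx,
\end{equation*}
and the first integral is bounded by $C(\int_U|z|^\ah\,dx)^{(\ah-a)/\ah}=C(\int_U|z|^\ah\,dx)^{1/\g_0}$ via Hölder on the bounded domain $U$.

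For the gradient integral I would factor $|z|^{\ah-a-1}|\n z|=(|\n z|^{2-a}|z|^{\ah-2})^{\frac1{2-a}}|z|^{s}$ with $s=\frac{(1-a)(\ah-a)}{2-a}$ and apply Hölder with conjugate exponents $2-a$ and $\frac{2-a}{1-a}$; since $s\cdot\frac{2-a}{1-a}=\ah-a$ this gives
\begin{equation*}
 \int_U|z|^{\ah-a-1}|\n z|\,dx\leq\Big(\int_U|\n z|^{2-a}|z|^{\ah-2}\,dx\Big)^{\frac1{2-a}}\Big(\int_U|z|^{\ah-a}\,dx\Big)^{\frac{1-a}{2-a}}.
\end{equation*}
A final application of Young's inequality (again with exponents $2-a$, $\frac{2-a}{1-a}$) absorbs the weighted gradient factor with an arbitrarily small constant and leaves a power of $\int_U|z|^\ah\,dx$ that works out to $1/\g_0$. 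Collecting the three displays into the Young reduction of the second paragraph and renaming constants — choosing the inner Young parameter first and $\e$ afterwards, so that the coefficients of both interior integrals can independently be made as small as desired — reproduces the asserted inequality with $\e_1$ multiplying $(\int_U|z|^\ah)^{1/\g_0}$ and $\e_2$ multiplying $\int_U|\n z|^{2-a}|z|^{\ah-2}$.

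I expect the only genuinely conceptual step to be the trace identity $z|_{\G_i}=\D_\gamma+F_1$: once it is clear that this combination is the constant boundary value of $z$, the remainder is a careful but routine bookkeeping of exponents, whose sole delicate point is arranging the Hölder and Young splittings so that the weighted gradient term enters linearly and the data term carries exactly the exponent $\frac{\ah-a}{1-a}$. Note that this route uses only the trace theorem and elementary inequalities, so the standing hypothesis $\ah\geq\frac{na}{2-a}$ enters merely through $\ah>2$ (guaranteeing $|z|^{\ah-2}$ is a legitimate weight and $\ah-a>1$), rather than through the weighted Poincaré inequality \eqref{ineq:w-poincare-simple-0}, which is unavailable here since $z$ does not vanish on all of $\partial U$.
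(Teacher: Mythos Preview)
Your argument is correct and follows essentially the same route as the paper: identify $z|_{\G_i}=\D_\gamma+F_1$, bound a power of this constant trace by interior integrals via the trace theorem, and split off $\D_Q^{(\ah-a)/(1-a)}$ with Young's inequality. The only cosmetic differences are the order of operations (the paper first bounds $|c|^{\ah-a}$ and afterwards multiplies by $|\D_Q|$ and applies Young, whereas you apply Young first) and that the paper applies the $W^{1,2-a}$ trace inequality to $|z|^{(\ah-a)/(2-a)}$, which lands directly on $\int_U|\n z|^{2-a}|z|^{\ah-2}\,dx$ and so bypasses your extra H\"older--Young step on $\int_U|z|^{\ah-a-1}|\n z|\,dx$.
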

\begin{proof}
 We have $ z |_{\Gamma_i}=(q-q_s)_{\Gamma_i}=\Delta_\gamma(t)+F_1(t)$.
Then by Trace theorem
\begin{align}
 ((\Delta_\gamma&+F_1)\cdot |\Gamma_i|)^{\ah-a}=\left(\int_{\G_i}| z |\,ds\right)^{\ah-a}
\leq C \int_{\G_i}| z |^{\ah-a}\,ds\\
&=C \int_{\G_i}| z |^{\frac{\ah-a}{2-a}\cdot (2-a)}\,ds
\leq C \int_{U}| z |^{\ah-a}\,dx
+ C \int_{U}\left[\nabla\left(| z |^{\frac{\ah-a}{2-a}}\right)\right]^{2-a}\,dx\notag\\
&= C \int_{U}| z |^{\ah-a}\,dx
+ C \int_{U}\left[|\n z || z |^{\frac{\ah-2}{2-a}}\right]^{2-a}\,dx\notag\\
&= C \int_{U}| z |^{\ah-a}\,dx
+ C \int_{U}|\n z |^{2-a}| z |^{\ah-2}\,dx.\notag
\end{align}
Then
\begin{align}
 |\Delta_\gamma+F_1|^{\ah-1}
\leq C \left(\int_{U}| z |^{\ah-a}\,dx\right)^{\frac{\ah-1}{\ah-a}}
+ C \left(\int_{U}|\n z |^{2-a}| z |^{\ah-2}\,dx\right)^{\frac{\ah-1}{\ah-a}}\\
\leq C \left(\int_{U}| z |^{\ah}\,dx\right)^{\frac{\ah-1}{\ah}}
+ C \left(\int_{U}|\n z |^{2-a}| z |^{\ah-2}\,dx\right)^{\frac{\ah-1}{\ah-a}}.\notag
\end{align}
Applying Young's inequality with $\e$ we get
\begin{align*}
 |\Delta_\gamma+F_1|^{\ah-1}|\Delta_Q(t)|
\leq \e_1 &\left(\int_{U}| z |^{\ah}\,dx\right)^{\frac{1}{\g_0}}
+ \e_2 \int_{U}|\n z |^{2-a}| z |^{\ah-2}\,dx\\
&+(C_{\e_1}+C_{\e_2})\D_Q^{\frac{\ah-a}{1-a}},
\end{align*}
which proves the result.
\end{proof}

\vspace{0.3cm}





\begin{lemma}\label{lem:ddt-q-qs-1}
Suppose $\ah\geq\frac{na}{2-a}>2$ and let $\gamma_0=\frac{\ah}{\ah-a}$. Then for $ z =q-q_s$ and $A_1(\ah,t)$ is as in \eqref{def:a1}
\begin{equation}\label{est:ddt-zbar}
  \frac{d}{dt}\int_U| z |^\ah\,dx\leq -C \left(\int_U| z |^\ah\,dx\right)^{\frac1{\gamma_0}}+C(1+A_1(\ah,t)).
 \end{equation}
\end{lemma}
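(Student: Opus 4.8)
The plan is to derive \eqref{est:ddt-zbar} by testing the evolution equation for $z$ against $\ah|z|^{\ah-2}z$. First I would record the identity that drives the computation: since $p_s=-At+W$ and $W$ solves \eqref{eq:w}, one has $p_{s,t}=-A=\n\cdot(K(|\n p_s|)\n p_s)$, so subtracting from \eqref{eq:p} gives
\begin{equation*}
 p_t+A=\n\cdot\big(K(|\n p|)\n p-K(|\n p_s|)\n p_s\big).
\end{equation*}
Because $q_{s,t}=0$ by \eqref{eq:qs} we have $z_t=q_t$, and \eqref{eq:q} together with $\tfrac1{|U|}Q(t)=A+\tfrac1{|U|}\D_Q(t)$ lets me write
\begin{equation*}
 z_t=\n\cdot\big(K(|\n p|)\n p-K(|\n p_s|)\n p_s\big)+\tfrac1{|U|}\D_Q(t)-\Psi_t+\tfrac1{|U|}\int_U\Psi_t\,dx.
\end{equation*}

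Next I would multiply by $\ah|z|^{\ah-2}z$ and integrate over $U$, so the left side becomes $\frac{d}{dt}\int_U|z|^\ah\,dx$. Integrating the divergence term by parts produces a boundary integral over $\G=\G_e\cup\G_i$ and the interior term $-\ah(\ah-1)\int_U|z|^{\ah-2}\n z\cdot(K(|\n p|)\n p-K(|\n p_s|)\n p_s)\,dx$. On $\G_e$ both fluxes vanish by \eqref{bc:0-neuman} and \eqref{bc:w-neum}; on $\G_i$ the trace $z|_{\G_i}=\D_\gamma(t)+F_1(t)$ is constant in $x$ (Lemma~\ref{lemma:bdd-for-q-qs}), so it factors out, and using $\int_{\G_i}K(|\n p|)\n p\cdot N\,ds=-Q(t)$, $\int_{\G_i}K(|\n p_s|)\n p_s\cdot N\,ds=-Q_s$, the boundary term equals $-\ah|\D_\gamma+F_1|^{\ah-2}(\D_\gamma+F_1)\D_Q(t)$. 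In absolute value this is at most $\ah|\D_\gamma+F_1|^{\ah-1}|\D_Q(t)|$, which is controlled directly by Lemma~\ref{lemma:bdd-for-q-qs}, contributing the $\e_1(\int_U|z|^\ah)^{1/\g_0}$, $\e_2\int_U|\n z|^{2-a}|z|^{\ah-2}$ and $C\D_Q^{(\ah-a)/(1-a)}$ pieces.

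For the interior term I would write $\n z=\n(p-p_s)-\n(\D_\Psi)$ (since $\n q=\n p-\n\Psi$, $\n q_s=\n W-\n\Phi$). The part carrying $\n(p-p_s)$ is $-\ah(\ah-1)\int_U|z|^{\ah-2}\n(p-p_s)\cdot(K(|\n p|)\n p-K(|\n p_s|)\n p_s)\,dx\le0$ by monotonicity of $y\mapsto K(|y|)y$, and its pointwise lower bound, combined with the weighted Poincar\'e inequality \eqref{ineq:w-poincare-simple-0}, supplies the coercive term $-C(\int_U|z|^\ah\,dx)^{1/\g_0}$ together with control on $\int_U|\n z|^{2-a}|z|^{\ah-2}\,dx$ that absorbs the $\e_2$-contribution above. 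The leftover interior piece $\ah(\ah-1)\int_U|z|^{\ah-2}\n(\D_\Psi)\cdot(K(|\n p|)\n p-K(|\n p_s|)\n p_s)\,dx$ is estimated through \eqref{k-ineq}, writing $\n p=\n z+\n(\D_\Psi)+\n W$ and applying weighted H\"older and Young inequalities in the weight $|z|^{\ah-2}$; the $\n z$-dependent factors are absorbed into the coercive terms, leaving $\|\n W\|_{L^b}^{\ah-a}$ and $\|\n(\D_\Psi)\|_{L^b}^{\ah-a}$ with $b=\ah(2-a)/2$.

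Finally I would dispatch the zero-order terms $\ah\int_U|z|^{\ah-2}z\big(\tfrac1{|U|}\D_Q+\tfrac1{|U|}\int_U\Psi_t\,dx-\Psi_t\big)\,dx$. Since $\tfrac1{|U|}\D_Q+\tfrac1{|U|}\int_U\Psi_t$ is constant in $x$, H\"older (using $|\int_U|z|^{\ah-2}z\,dx|\le(\int_U|z|^\ah)^{(\ah-1)/\ah}|U|^{1/\ah}$) and Young give the $|\D_Q+\int_U\Psi_t|^\ah$ term, while the $\Psi_t$ term, bounded by $\ah(\int_U|z|^\ah)^{(\ah-1)/\ah}(\int_U|\Psi_t|^\ah)^{1/\ah}$ and Young'd against $(\int_U|z|^\ah)^{1/\g_0}=(\int_U|z|^\ah)^{(\ah-a)/\ah}$ (legitimate because $a<1$), yields $(\int_U|\Psi_t|^\ah)^{(\ah-a)/(\ah(1-a))}$. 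Collecting all contributions and choosing $\e_1,\e_2$ small enough to be absorbed by the coercive term proves \eqref{est:ddt-zbar}, with the right-hand error assembled into $A_1(\ah,t)$ as in \eqref{def:a1}. I expect the coercivity step to be the main obstacle: \eqref{ineq:w-poincare-simple-0} requires $u|_{\partial U}=0$, whereas $z$ is only constant on $\G_i$ and unconstrained on $\G_e$, so this step relies on the monotone structure to transfer coercivity from $\n(p-p_s)$ to $\n z$ and must be reconciled carefully with the boundary behavior of $z$.
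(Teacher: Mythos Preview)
Your overall architecture---test with $\ah|z|^{\ah-2}z$, handle the $\G_i$ boundary term via Lemma~\ref{lemma:bdd-for-q-qs}, and estimate the zero-order forcing by H\"older/Young exactly as you describe---matches the paper. The gap is in the coercivity step.

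You claim that the integrand $\nabla(p-p_s)\cdot(K(|\nabla p|)\nabla p-K(|\nabla p_s|)\nabla p_s)$, weighted by $|z|^{\ah-2}$, has a ``pointwise lower bound'' which, fed into \eqref{ineq:w-poincare-simple-0}, produces both the coercive term $-C(\int_U|z|^\ah)^{1/\g_0}$ and an absorbing $\int_U|\nabla z|^{2-a}|z|^{\ah-2}$. But there is no pointwise inequality of the form $(K(|y_1|)y_1-K(|y_2|)y_2)\cdot(y_1-y_2)\geq c|y_1-y_2|^{2-a}$: strong monotonicity only gives $\geq c\,K(\max(|y_1|,|y_2|))\,|y_1-y_2|^2$, which degenerates when $|\nabla p|$ and $|\nabla W|$ are both large while their difference is small (take $y_1,y_2$ collinear with $|y_1-y_2|=\e$; the left side is $O(\e^2)$, the right $O(\e^{2-a})$). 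So monotonicity alone does not deliver the $|\nabla z|^{2-a}|z|^{\ah-2}$ integral, and without it you cannot invoke \eqref{ineq:w-poincare-simple-0} or absorb the $\e_2$-contributions from Lemma~\ref{lemma:bdd-for-q-qs} and from the $\nabla(\D_\Psi)$ piece.

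The paper bypasses monotonicity here: it expands the weighted form as $I_p+I_w+I_{pw}+I_{wp}$ (see \eqref{4-eq}) and extracts coercivity from the single term $I_p=-\int_U K(|\nabla p|)|\nabla p|^2|z|^{\ah-2}\,dx$, using \eqref{k-xi2} in the form $K(\xi)\xi^2\geq C\xi^{2-a}-C$. This yields $-C\int_U|\nabla p|^{2-a}|z|^{\ah-2}\,dx$, and since $\nabla z=\nabla p-\nabla W-\nabla(\D_\Psi)$ one passes to $-C\int_U|\nabla z|^{2-a}|z|^{\ah-2}\,dx$ at the price of $\|\nabla W\|_{L^b}^{\ah-a}+\|\nabla(\D_\Psi)\|_{L^b}^{\ah-a}$ errors. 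The signed term $I_w\le 0$ is discarded, and the cross terms $I_{pw},I_{wp}$ are handled via $K(\xi)\xi\leq C\xi^{1-a}$ and Young, each contributing only a small multiple of $\int_U|\nabla z|^{2-a}|z|^{\ah-2}$ plus the same $L^b$ norms. Only after this decomposition is \eqref{ineq:w-poincare-simple-0} applied. Your concern about the boundary hypothesis in \eqref{ineq:w-poincare-simple-0} is legitimate but secondary: you first need to produce the quantity to which that inequality is to be applied.
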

\begin{proof}
Subtracting equations \eqref{eq:q} and \eqref{eq:qs} from each other, multiplying on 
$| z |^{\ah-1}{\rm sign}( z )$ and integrating over $U$ one has
\begin{align*}
 \frac1\alpha&\frac{d}{dt}\int_U| z |^\alpha\,dx=-(\ah-1)\int_U(K(|\nabla p|)\nabla p-K(|\nabla W|)\nabla W)\nabla z | z |^{\ah-2}\,dx\\
&+\int_{\Gamma_i}(K(|\nabla p|)\nabla p-K(|\nabla W|)\nabla W)\cdot N| z |^{\ah-1}\,ds +\frac1{|U|}\Delta_Q(t)\int_U| z |^{\ah-1}{\rm sign}( z )\,dx\\
&\quad\quad\quad\quad\quad\quad\quad\quad\quad
-\int_U\big[\Psi_t-\frac1{|U|}\int_U\Psi_t\,dy\big]| z |^{\ah-1}{\rm sign}( z )\,dx.
\end{align*}
From boundary conditions \eqref{bc:q} and \eqref{bc:qs} it follows that $ z |_{\Gamma_i}=\Delta_\gamma(t)+F_1(t)$, and one has
\begin{align}\label{ddt-z-0}
\frac1\ah&\frac{d}{dt}\int_U| z |^\ah\,dx=
-(\ah-1)\int_U(K(|\nabla p|)\nabla p-K(|\nabla W|)\nabla W)\nabla(p-p_s)| z |^{\ah-2}\,dx\notag\\
&+(\ah-1)\int_U(K(|\nabla p|)\nabla p-K(|\nabla W|)\nabla W)\nabla(\Delta_\Psi)| z |^{\ah-2}\,dx
\notag\\
&+\frac1{|U|}\int_U\big[\Delta_Q(t)+\int_U\Psi_t\,dx-|U|\Psi_t\big]| z |^{\ah-1}{\rm sign}( z )\,dx
+[\Delta_\gamma(t)+F_1(t)]^{\ah-1}\Delta_Q(t).
\end{align}

We split the first integral in RHS of \eqref{ddt-z-0} in four separate integrals and estimate them one-by-one. Namely,
\begin{equation}\label{4-eq}
-\int_U(K(|\nabla p|)\nabla p-K(|\nabla W|)\nabla W)\n(p-p_s)| z |^{\ah-2}\,dx= I_p+I_w+I_{pw}+I_{wp},
\end{equation}
where (notice that $\nabla p_s=\nabla W$)
\begin{equation*}
\begin{aligned}
&I_p=-\int_UK(|\nabla p|)|\nabla p|^2| z |^{\ah-2}\,dx, 
\ &&I_w=-\int_UK(|\nabla W|)|\nabla W|^2| z |^{\ah-2}\,dx\leq0, \\
&I_{pw}=\int_UK(|\nabla p|)(\nabla p\cdot\nabla W)| z |^{\ah-2}\,dx, 
 &&I_{wp}=\int_UK(|\nabla W|)(\nabla W\cdot\nabla p)| z |^{\ah-2}\,dx.
\end{aligned}
\end{equation*}
Since $\nabla z  = \nabla(p-W)-\nabla(\Delta_\Psi)$ and $\frac{\ah-2}{\ah}<\frac{\ah-a}{\ah}=\frac1{\gamma_0}$, similar to (4.5) in \cite{HI12anydegree} we have
\begin{align}\label{Ip-final}
I_p&\leq -C\int_U(|\nabla p|^{2-a}-1)| z |^{\ah-2}\,dx
\leq -C\int_U |\n  z |^{2-a}| z |^{\ah-2}\,dx\notag\\
&\qquad\qquad+C\int_U (|\n(\D_\Psi)|^{2-a}+|\n p_s|^{2-a}+1)| z |^{\ah-2}\,dx\notag\\
&\leq -C\int_U |\n z |^{2-a}| z |^{\ah-2}\,dx+
\e_1\left(\int_U| z |^\ah\,dx\right)^{\frac1{\gamma_0}}\notag\\
&\qquad\qquad+C_{\e_1}\left[1+
\|\n W\|_{L^b(U)}^{\ah-a}+\|\n(\Delta_\Psi)\|_{L^b(U)}^{\ah-a}
\right].
\end{align}
%

The integral $I_{pw}$ can be estimated similar to (4.6)-(4.9) in \cite{HI12anydegree}:
\begin{align}\label{Ipw}
 &I_{pw}\leq\int_UK(|\nabla p|)|\nabla p||\nabla W|| z |^{\ah-2}\,dx
\leq 
\int_U|\nabla p|^{1-a}|\nabla W|| z |^{\ah-2}\,dx\\
& \leq\int_U|\nabla z |^{1-a}|\nabla W|| z |^{\ah-2}\,dx
+ \int_U(|\nabla W|^{2-a}+|\nabla (\Delta_\Psi)|^{1-a}|\nabla W|)| z |^{\ah-2}\,dx.\notag
\end{align}
The first integral in \eqref{Ipw} can be estimated similar to (4.8)
\begin{align}
 \int_U&|\nabla z |^{1-a}|\nabla W|| z |^{\ah-2}\,dx\notag\\
&\leq \e\int_U|\nabla z |^{2-a}| z |^{\ah-2}\,dx
+\e\left(\int_U| z |^\ah\,dx\right)^{\frac1{\gamma_0}}+C_\e\, \|\n W\|_{L^b(U)}^{\ah-a}.
\end{align}
The second integral in \eqref{Ipw} is estimated similar to (4.7)
\begin{equation}
 \int_U|\nabla W|^{2-a}| z |^{\ah-2}\,dx
\leq \e\left(\int_U| z |^\ah\,dx\right)^{\frac1{\gamma_0}}+
C_\e\, \|\n W\|_{L^b(U)}^{\ah-a}.
\end{equation}
Similar the third integral in \eqref{Ipw} can be estimated as
\begin{align}\label{int-1-a}
 \int_U|\nabla (\Delta_\Psi)|^{1-a}|\nabla W|&|z|^{\ah-2}\,dx
\leq \, \e\left(\int_U| z |^\ah\,dx\right)^{\frac1{\gamma_0}}\\
&+C_\e \left(\int_U|\nabla (\Delta_\Psi)|^{\frac{\ah(1-a)}{2}}
|\nabla W|^{\frac\ah2}\,dx\right)^{\frac{\ah-a}{b}}.\notag
\end{align}
Since $\frac{1}{2-a}+\frac{1-a}{2-a}=1$, from H\"{o}lder and Young's inequalities it follows 
\begin{equation}
 \left(\int_U|\nabla (\Delta_\Psi)|^{\frac{\ah(1-a)}{2}}
|\nabla W|^{\frac\ah2}\,dx\right)^{\frac{\ah-a}{b}}\leq 
C\left(\|\n W\|_{L^b(U)}^{\ah-a}+\|\n(\Delta_\Psi)\|_{L^b(U)}^{\ah-a}\right).
\end{equation}
Combining the estimates above in \eqref{Ipw} we have the estimate for the integral $I_{pw}$
\begin{align}\label{Ipw-final}
 I_{pw}\leq \ \e\int_U|\nabla z |^{2-a}| z |^{\ah-2}\,dx
&+\e\left(\int_U| z |^\ah\,dx\right)^{\frac1{\gamma_0}}\notag\\
&+C\left(\|\n W\|_{L^b(U)}^{\ah-a}+\|\n(\Delta_\Psi)\|_{L^b(U)}^{\ah-a}\right).
\end{align}

Finally, for the integral $I_{wp}$ we have
\begin{align}\label{Iwp}
I_{wp}&\leq\int_UK(|\nabla W|)|\nabla W||\nabla p|| z |^{\ah-2}\,dx
\leq \int_U|\nabla W|^{1-a}|\nabla p|| z |^{\ah-2}\,dx\\
&\leq C\int_U|\nabla W|^{1-a}|\nabla  z || z |^{\ah-2}\,dx+
C\int_U|\nabla W|^{1-a}|\n (W+\D_\Psi)|| z |^{\ah-2}\,dx.\notag
\end{align}
Applying H\"{o}lder's and Young's inequalities for the first integral in the RHS of \eqref{Iwp} we get 
\begin{align}
 \int_U&|\nabla W|^{1-a}|\nabla  z || z |^{\ah-2}\,dx\notag\\
&=\int_U|\nabla W|^{1-a}
\cdot |\nabla  z || z |^{\frac{(\ah-2)}{(2-a)}}
\cdot| z |^{\frac{(\ah-2)(1-a)}{2-a}}\,dx\notag\\
&\leq 
\left(\int_U|\n  z |^{2-a}| z |^{\ah-2}\,dx\right)^{\frac{1}{2-a}}
\cdot \left(\int_U|\nabla W|^{2-a}| z |^{\ah-2}\,dx\right)^{\frac{1-a}{2-a}}\notag\\
&\leq 
 \e\int_U|\n  z |^{2-a}| z |^{\ah-2}\,dx 
+ C_\e\int_U|\nabla W|^{2-a}| z |^{\ah-2}\,dx \notag\\
&\leq \e\int_U|\n  z |^{2-a}| z |^{\ah-2}\,dx +
\e\left(\int_U| z |^\ah\,dx\right)^{\frac1{\gamma_0}}+
C_\e\, \|\n W\|_{L^b(U)}^{\ah-a}.\notag
\end{align}
The second integral in the RHS of \eqref{Iwp} can be estimated similar to \eqref{int-1-a}
\begin{align}
 \int_U|\nabla W|^{1-a}&|\n (W+\D_\Psi)|| z |^{\ah-2}\,dx\notag\\
&\leq \e\left(\int_U| z |^\ah\,dx\right)^{\frac1{\gamma_0}} +  
C\left(\|\n W\|_{L^b(U)}^{\ah-a}+\|\n(\Delta_\Psi)\|_{L^b(U)}^{\ah-a}\right). \notag
\end{align}
Then in \eqref{Iwp} the integral $I_{wp}$ can be estimated as
\begin{align}\label{Iwp-final}
 I_{wp}\leq \e\int_U|\n  z |^{2-a}| z |^{\ah-2}\,dx +
\e\left(\int_U| z |^\ah\,dx\right)^{\frac1{\gamma_0}}+
C\left(\|\n W\|_{L^b(U)}^{\ah-a}+\|\n(\Delta_\Psi)\|_{L^b(U)}^{\ah-a}\right).
\end{align}
\noindent Substiting \eqref{Ip-final}, \eqref{Ipw-final} and \eqref{Iwp-final} in \eqref{4-eq} we get the estimate for the first integral in the RHS of \eqref{ddt-z-0} 
\begin{align}\label{1st-int-rhs}
-\int_U(K(|\nabla p|)\nabla p-&K(|\nabla W|)\nabla W)\n(p-p_s)| z |^{\ah-2}\,dx\notag\\
 \leq&-(C-\e)\int_U|\n  z |^{2-a}| z |^{\ah-2}\,dx +
\e\left(\int_U| z |^\ah\,dx\right)^{\frac1{\gamma_0}}\notag\\
&+C\,\left[1+\|\n W\|_{L^b(U)}^{\ah-a}+\|\n(\Delta_\Psi)\|_{L^b(U)}^{\ah-a}\right].
\end{align}

\noindent The second integral in the RHS of \eqref{ddt-z-0} can be estimated as 
\begin{align}
&\int_U(K(|\nabla p|)\nabla p-K(|\nabla W|)\nabla W)\nabla(\Delta_\Psi)| z |^{\ah-2}\,dx\\
&\leq C \int_U K(|\nabla p|)|\nabla p||\nabla(\Delta_\Psi)|| z |^{\ah-2}\,dx
+\int_UK(|\nabla W|)|\nabla W||\nabla(\Delta_\Psi)|| z |^{\ah-2}\,dx.\notag
\end{align}
Then similar to the estimate for $I_{pw}$ and \eqref{int-1-a} we get
\begin{align}\label{2nd-int-rhs}
\int_U(K(|\nabla p|)\nabla p&-K(|\nabla W|)\nabla W)\nabla(\Delta_\Psi)| z |^{\ah-2}\,dx\leq \e\int_U|\nabla z |^{2-a}| z |^{\ah-2}\,dx\notag\\
&+\e\left(\int_U| z |^\ah\,dx\right)^{\frac1{\gamma_0}}+C\left(\|\n W\|_{L^b(U)}^{\ah-a}+\|\n(\Delta_\Psi)\|_{L^b(U)}^{\ah-a}\right).
\end{align}
By H\"{o}lder's and Young's inequalities, we get
\begin{align}\label{some-bdd-est}
 &\frac1{|U|}\int_U\big[\Delta_Q(t)+\int_U\Psi_t\,dx-|U|\cdot\Psi_t\big]| z |^{\ah-1}{\rm sign}( z )\,dx\\
&\leq \e\left(\int_U| z |^\ah\,dx\right)^{\frac1{\gamma_0}}+
C\left(\int_U\bigg|\Delta_Q(t)+\int_U\Psi_t\,dx-|U|\cdot\Psi_t\bigg|^\ah\,dx\right)^
{\frac{\ah-a}{\ah(1-a)}}\notag\\
&\leq \e\left(\int_U| z |^\ah\,dx\right)^{\frac1{\gamma_0}}+
C\bigg|\Delta_Q(t)+\int_U\Psi_t\,dx\bigg|^\ah+C\left(\int_U|\Psi_t|^\ah\,dx\right)^
{\frac{\ah-a}{\ah(1-a)}}.\notag
\end{align}

Finally combining \eqref{1st-int-rhs}, \eqref{2nd-int-rhs}, \eqref{some-bdd-est} and Lemma~\ref{lemma:bdd-for-q-qs} in \eqref{ddt-z-0} we get after choosing small enough $\e$
\begin{align}\label{yet-another-formula}
 \frac{d}{dt}\int_U| z |^\ah\,dx\leq -C\int_U|\nabla z |^{2-a}| z |^{\ah-2}\,dx
+\e\left(\int_U| z |^\ah\,dx\right)^{\frac1{\gamma_0}}+C(1+A_1(\ah,t)).
\end{align}

Applying \eqref{ineq:w-poincare-simple-0} to the first term in the RHS of \eqref{yet-another-formula} we get 
\begin{align*}
 \frac{d}{dt}\int_U| z |^\ah\,dx\leq -C\left(\left(\int_U| z |^\ah\,dx\right)^{\frac1{\gamma_0}}-1\right)
+\e\left(\int_U| z |^\ah\,dx\right)^{\frac1{\gamma_0}}+C(1+A_1(\ah,t)).
\end{align*}
Selecting sufficiently small $\e$ we obtain \eqref{est:ddt-zbar}.
\end{proof}


\vspace{.5cm}

In order to prove the estimate on $\int_U| z |^\ah\,dx$ we recall the Lemma A.1 from \cite{HI12anydegree}.
\begin{lemma}\label{lem:A1-gronw}
 Let $\phi:[0,\infty)\to[0,\infty)$ be a continuous, strictly increasing function. Suppose $y(t)\geq0$ is a continuous function on $[0,\infty)$ such that
 \begin{equation*}
  y'\leq-C\phi^{-1}(y(t))+f(t), \quad t>0,
 \end{equation*}
where $f(t)\geq0$ for $t\geq0$ is a  continuous function. Then 
$$\limsup\limits_{t\to\infty}y(t)\leq \phi\left(\limsup\limits_{t\to\infty}\frac{f(t)}{h(t)}\right)$$.
\end{lemma}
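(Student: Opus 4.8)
The plan is to treat this as a barrier (sub/super-solution) argument that exploits the single structural feature of the inequality: the right-hand side $-C\phi^{-1}(y)+f(t)$ is \emph{decreasing} in $y$, since $\phi$, and hence $\phi^{-1}$, is strictly increasing. This monotonicity turns every horizontal level $y=\mathrm{const}$ into a one-sided barrier, so the whole statement reduces to locating the level at which the two competing terms balance. Throughout I use that $\phi^{-1}$ is well defined on $[0,\infty)$ (a continuous strictly increasing bijection) so that $\phi^{-1}(y(t))$ makes sense for the nonnegative $y$.

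First I would fix $\e>0$ and set $\ell=\limsup_{t\to\infty}f(t)$, so that $f(t)\le \ell+\e$ for all $t\ge T_\e$, for some $T_\e$. I then define the threshold $y^*_\e=\phi\big(\tfrac{\ell+2\e}{C}\big)$, chosen precisely so that $C\phi^{-1}(y^*_\e)=\ell+2\e$. The key pointwise estimate is that whenever $t\ge T_\e$ and $y(t)\ge y^*_\e$, monotonicity of $\phi^{-1}$ gives $C\phi^{-1}(y(t))\ge \ell+2\e$, whence
\begin{equation*}
 y'(t)\le -C\phi^{-1}(y(t))+f(t)\le -(\ell+2\e)+(\ell+\e)=-\e<0.
\end{equation*}

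Then I would run the barrier argument in two steps. While $y$ stays above the level $y^*_\e$ it decreases at rate at least $\e$, so after a finite time (at most $(y(T_\e)-y^*_\e)/\e$) it must reach $y^*_\e$; and once $y(t_0)=y^*_\e$ for some $t_0\ge T_\e$, the strict inequality $y'(t_0)\le -\e<0$ prevents $y$ from re-crossing the level upward, so $y(t)\le y^*_\e$ for all $t\ge t_0$. Continuity of $y$ is what makes ``cannot re-cross'' rigorous, since an upward crossing would force the (upper Dini) derivative to be nonnegative at the crossing time, contradicting the displayed estimate. Consequently $\limsup_{t\to\infty}y(t)\le y^*_\e=\phi\big(\tfrac{\ell+2\e}{C}\big)$.

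Finally, letting $\e\to0^+$ and invoking continuity of $\phi$ gives $\limsup_{t\to\infty}y(t)\le\phi\big(\ell/C\big)=\phi\big(\limsup_{t\to\infty}f(t)/C\big)$, the asserted bound (with the denominator read as the coefficient $C$ of $\phi^{-1}$). The only genuine obstacle is making the ``no upward re-crossing'' step airtight without assuming more regularity on $y$ than is given; I expect to settle it purely from continuity of $y$ together with the sign of its upper Dini derivative, or, equivalently and perhaps more cleanly, by comparison with the autonomous scalar ODE $\bar y'=-C\phi^{-1}(\bar y)+(\ell+2\e)$, whose unique equilibrium $y^*_\e$ is globally attracting exactly because its right-hand side is decreasing in $\bar y$.
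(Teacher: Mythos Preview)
The paper does not actually prove this lemma; it is merely recalled from \cite{HI12anydegree} (Lemma~A.1 there) and quoted without proof, so there is no in-paper argument to compare against. Your barrier argument is the standard route and is correct in outline: once $f(t)\le\ell+\e$ for $t\ge T_\e$, any excursion of $y$ above the level $y^*_\e=\phi\big((\ell+2\e)/C\big)$ forces $y'\le-\e$, so the excursion has finite duration and cannot restart. The tidiest way to make the ``no re-crossing'' step rigorous is essentially what you hint at: on any maximal open interval $(a,b)\subset\{t>T_\e:y(t)>y^*_\e\}$ with $a>T_\e$, continuity gives $y(a)=y^*_\e$, but integrating $y'\le-\e$ yields $y(t)\le y^*_\e-\e(t-a)<y^*_\e$ for $t>a$, a contradiction; hence only the (possibly empty) initial interval with left endpoint $T_\e$ survives, and it terminates in finite time since $y\ge0$. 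You are also right to flag that the ``$h(t)$'' in the stated conclusion is a misprint for the constant $C$ multiplying $\phi^{-1}$.
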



\vspace{0.5cm}

The following result will follow from Lemma~\ref{lem:ddt-q-qs-1}.
\begin{theorem}\label{th:zbar-to0}
For $\ah\geq na/(2-a)$ and $A_1(\ah)$ as in \eqref{def:limsupa1}
 \begin{equation}\label{est:zbar-bound-2}
\limsup\limits_{t\to\infty}\int_U| z |^\ah\,dx\leq  C\left(1+A_1(\ah)\right).
\end{equation}
Consequently, if $Q(t)$ and $\Psi(x,t)$ satisfy condition $A1$ then 
\begin{equation}\label{est:zbar-bound-1}
\limsup\limits_{t\to\infty}\int_U| z |^\ah\,dx\leq C.
\end{equation}
\end{theorem}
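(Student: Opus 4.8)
The plan is to read the scalar quantity $y(t)=\int_U|z|^\ah\,dx$ as the unknown in the differential inequality \eqref{est:ddt-zbar} and to extract its asymptotic size from the Gronwall-type Lemma~\ref{lem:A1-gronw}. First I would record that $y(t)\ge 0$ is continuous on $[0,\infty)$ and that, by Lemma~\ref{lem:ddt-q-qs-1}, it obeys
\[
y'(t)\le -C\,y(t)^{1/\gamma_0}+C\bigl(1+A_1(\ah,t)\bigr),\qquad \gamma_0=\frac{\ah}{\ah-a}.
\]
Since $\ah\ge na/(2-a)>2$ and $a\in(0,1)$, the feedback exponent $1/\gamma_0=(\ah-a)/\ah$ lies in $(0,1)$, so the damping term is genuinely sublinear; this is exactly the regime covered by Lemma~\ref{lem:A1-gronw}.

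Next I would apply that lemma with the choice $\phi(s)=s^{\gamma_0}$, which is continuous and strictly increasing on $[0,\infty)$ with $\phi^{-1}(s)=s^{1/\gamma_0}$, and with forcing term $f(t)=C\bigl(1+A_1(\ah,t)\bigr)\ge 0$, continuous under the standing assumptions on the boundary data. The conclusion of the lemma then yields, after absorbing the coefficient of $\phi^{-1}$ into the constant,
\[
\limsup_{t\to\infty} y(t)\le C\,\phi\Bigl(\limsup_{t\to\infty}\bigl(1+A_1(\ah,t)\bigr)\Bigr)
= C\Bigl(1+\limsup_{t\to\infty}A_1(\ah,t)\Bigr)^{\gamma_0}.
\]

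The remaining step is purely arithmetic. Because $\gamma_0=\ah/(\ah-a)>1$, the convexity inequality $(u+v)^{\gamma_0}\le 2^{\gamma_0-1}\bigl(u^{\gamma_0}+v^{\gamma_0}\bigr)$ gives
\[
\Bigl(1+\limsup_{t\to\infty}A_1(\ah,t)\Bigr)^{\gamma_0}\le C\Bigl(1+\bigl(\limsup_{t\to\infty}A_1(\ah,t)\bigr)^{\gamma_0}\Bigr).
\]
Since $s\mapsto s^{\gamma_0}$ is continuous and increasing, one has $\bigl(\limsup_{t\to\infty}A_1(\ah,t)\bigr)^{\gamma_0}=\limsup_{t\to\infty}A_1(\ah,t)^{\gamma_0}=\limsup_{t\to\infty}A_1(\ah,t)^{\ah/(\ah-a)}=A_1(\ah)$ by the definition \eqref{def:limsupa1}. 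Chaining the three displays proves \eqref{est:zbar-bound-2}. Finally, under Assumption $A1$ the note accompanying \eqref{assump1} records $A_1(\ah)\le C$, and substituting this into \eqref{est:zbar-bound-2} immediately yields \eqref{est:zbar-bound-1}.

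I expect the only delicate point to be the exponent bookkeeping: confirming that the $\gamma_0$-th power produced by $\phi$ is precisely the power $\ah/(\ah-a)$ built into the definition of $A_1(\ah)$ in \eqref{def:limsupa1}, and that the interchange of $\limsup$ with the continuous increasing map $s\mapsto s^{\gamma_0}$ is legitimate. Everything else — continuity of $y$ and $f$, finiteness of the relevant $\limsup$ under the boundary-data assumptions, and the elementary power inequality — is routine.
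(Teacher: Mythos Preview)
Your argument is correct and is precisely the route the paper takes: its proof consists of the single sentence ``The result follows from Lemma~\ref{lem:A1-gronw},'' and you have simply unpacked that application in full, choosing $\phi(s)=s^{\gamma_0}$, matching $\phi^{-1}(y)=y^{1/\gamma_0}$ to the damping term in \eqref{est:ddt-zbar}, and then doing the exponent bookkeeping that turns the output of the lemma into the quantity $A_1(\ah)$ of \eqref{def:limsupa1}. The delicate points you flag---that $\gamma_0=\ah/(\ah-a)$ is exactly the power in \eqref{def:limsupa1} and that $\limsup$ commutes with the continuous increasing map $s\mapsto s^{\gamma_0}$---are the only ones that need checking, and you handle them correctly.
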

\begin{proof}
 The result follows from Lemma~\ref{lem:A1-gronw}.
\end{proof}


\vspace{0.3cm}

\subsection{Bounds for the gradient of solutions}
We will obtain the bounds for the integral $\int_U H(|\n p|)\,dx$, where $H(|\n p|)$ is defined in \eqref{h-def}. This result is necessary for our estimates of the difference in time derivative of fully transient and PSS solutions. 
%

\begin{theorem}\label{thrm:bdd-h-gradp}
Let $A_2(t)$ be as in \eqref{def:a2}. For $\ah\geq na/(2-a)$
\begin{equation}\label{ip-limsup}
\limsup\limits_{t\to\infty}\int_U H(|\n p|)\,dx\leq 
C\left(1+A_1(\ah)+\limsup\limits_{t\to\infty}A_{2}(t)\right).
\end{equation}
Consequently,  if $Q(t)$ and $\Psi(x,t)$ satisfy conditions $A2$ then 
 \begin{equation}\label{ip-bounded}
\int_U|\nabla p|^{2-a}\,dx\leq  C\left(\int_U H(|\n p|)\,dx+1\right)\leq C.
 \end{equation}
\end{theorem}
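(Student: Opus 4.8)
The plan is to control $\int_U H(|\n p|)\,dx$ by combining two energy estimates into a single Lyapunov functional and then invoking the Gronwall-type Lemma~\ref{lem:A1-gronw}, proving \eqref{ip-limsup} first and reading off \eqref{ip-bounded} from \eqref{h-xi-2-a}. Throughout I work with $z=q-q_s$ for $q,q_s$ as in \eqref{def:q}, \eqref{def:qs}; recall from \eqref{intq=0} that $\int_U z\,dx=0$, that $z|_{\G_i}=\Delta_\gamma(t)+F_1(t)$ is spatially constant (as in Lemma~\ref{lem:ddt-q-qs-1}), and that a direct computation of the gradients gives $\n z=\n p-\n W-\n(\Delta_\Psi)$. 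Theorem~\ref{th:zbar-to0} already furnishes $\limsup_t\int_U|z|^\ah\,dx\le C(1+A_1(\ah))$, whence by H\"older (since $\ah>2$) $\limsup_t\int_U z^2\,dx\le C(1+A_1(\ah))=:C_1$. The two ingredients I need are (a) a coercivity estimate showing $\int_U H(|\n p|)\,dx$ is \emph{dissipated} by the $z$-energy, and (b) a crude bound on the growth rate of $\int_U H(|\n p|)\,dx$ itself.

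For (a) I multiply \eqref{eq:p} by $z$ and integrate over $U$. Integration by parts, with $\partial p/\partial N=0$ on $\G_e$ (see \eqref{bc:0-neuman}) and $z|_{\G_i}$ constant together with $-\int_{\G_i}K(|\n p|)\n p\cdot N\,ds=Q(t)$ (see \eqref{bc:flux}), yields
\begin{equation*}
\int_U K(|\n p|)|\n p|^2\,dx=\int_U K(|\n p|)\n p\cdot\bigl(\n W+\n(\Delta_\Psi)\bigr)\,dx-(\Delta_\gamma+F_1)Q(t)-\int_U p_t\,z\,dx.
\end{equation*}
Using the evolution \eqref{eq:q} of $q$ (and $\partial_t q_s=0$) with $\int_U z\,dx=0$ rewrites the last term as $\int_U p_t z\,dx=\tfrac12\tfrac{d}{dt}\int_U z^2\,dx+\int_U\Psi_t z\,dx$. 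The cross terms are then controlled by $K(\xi)\xi\le C\xi^{1-a}$ (from \eqref{k-xi2}) and Young's inequality, producing $\varepsilon\int_U|\n p|^{2-a}\,dx$ together with $\int_U|\n W|^{2-a}$ and $\int_U|\n(\Delta_\Psi)|^{2-a}$; the boundary term $(\Delta_\gamma+F_1)Q(t)$ is treated through the trace theorem applied to the constant $z|_{\G_i}$, bounding it by $\varepsilon\int_U|\n z|^{2-a}\,dx+C(1+\int_U|z|\,dx)$ and hence again by $\varepsilon\int_U|\n p|^{2-a}\,dx$ plus $A_2$-terms; and $\int_U\Psi_t z\,dx$ is absorbed by Cauchy--Schwarz using $\int_U z^2\le C_1$. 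Since $\int_U K(|\n p|)|\n p|^2\,dx\ge C\int_U|\n p|^{2-a}\,dx-C$ by \eqref{k-xi2}, after choosing $\varepsilon$ small and using $\int_U|\n p|^{2-a}\ge C^{-1}\int_U H(|\n p|)$ from \eqref{h-xi-2-a} I obtain
\begin{equation*}
\frac{d}{dt}\int_U z^2\,dx\le -c\int_U H(|\n p|)\,dx+C\bigl(1+A_2(t)\bigr),
\end{equation*}
with every emergent quantity accounted for by $A_1(\ah)$ and $A_2(t)$ as in \eqref{def:a2}, \eqref{def:limsupa1}.

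For (b) I multiply \eqref{eq:p} by $p_t$ and integrate; since $\tfrac{d}{dt}H(|\n p|)=2K(|\n p|)\n p\cdot\n p_t$, integration by parts gives
\begin{equation*}
\frac12\frac{d}{dt}\int_U H(|\n p|)\,dx+\int_U p_t^2\,dx=-\gamma'(t)Q(t)+\int_U p_t\Psi_t\,dx+\int_U K(|\n p|)\n p\cdot\n\Psi_t\,dx,
\end{equation*}
where the $\G_i$-boundary contribution has been converted into volume integrals via an extension of $p_t|_{\G_i}=\gamma'(t)+\psi_t$. Bounding $\int_U p_t\Psi_t$ by $\tfrac12\int_U p_t^2+\tfrac12\int_U\Psi_t^2$, the last integral by $\varepsilon\int_U|\n p|^{2-a}+C\int_U|\n\Psi_t|^2$, and $|\gamma'(t)Q(t)|$ by a constant (this is the one place where control of $\gamma'=\Delta_\gamma'-A$ is used, as supplied by \eqref{assump3-beta}), then discarding $\tfrac12\int_U p_t^2\ge0$ and using $\int_U|\n p|^{2-a}\le C(\int_U H(|\n p|)+1)$, I reach
\begin{equation*}
\frac{d}{dt}\int_U H(|\n p|)\,dx\le \varepsilon'\int_U H(|\n p|)\,dx+C\bigl(1+A_2(t)\bigr).
\end{equation*}

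Finally I set $G(t)=\int_U H(|\n p|)\,dx+M\int_U z^2\,dx$ for a large constant $M$. Adding $M$ times (a) to (b) gives $G'\le(\varepsilon'-Mc)\int_U H\,dx+C_M(1+A_2(t))$; choosing $M$ so that $\varepsilon'-Mc=-\delta<0$ and using $\int_U H=G-M\int_U z^2\ge G-MC_1$ converts this into the damped inequality $G'\le-\delta G+\bigl(\delta MC_1+C_M(1+A_2(t))\bigr)$. Lemma~\ref{lem:A1-gronw} (applied with a linear $\phi$) then yields $\limsup_t G(t)\le MC_1+\tfrac{C_M}{\delta}(1+\limsup_t A_2(t))\le C(1+A_1(\ah)+\limsup_t A_2(t))$, and since $\int_U H(|\n p|)\le G$ this is precisely \eqref{ip-limsup}; the estimate \eqref{ip-bounded} is then immediate from \eqref{h-xi-2-a} once $A1$--$A2$ make the right-hand side finite. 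I expect the main obstacle to lie in the bookkeeping of the boundary terms — namely converting the $\G_i$-contribution $(\Delta_\gamma+F_1)Q$ in (a) into a quantity absorbable by the coercive $\int_U|\n p|^{2-a}$ via the trace theorem, and securing the bound on $\gamma'(t)Q(t)$ in (b) — since these are the only steps that call on the full strength of the boundary assumptions rather than on coercivity alone.
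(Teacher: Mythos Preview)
Your overall architecture (two energy identities, combine into a damped Lyapunov functional, apply a Gronwall-type lemma) is close in spirit to the paper's, but there is a genuine gap in step~(b) that prevents the argument from proving the theorem as stated.

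The issue is the boundary term $-\gamma'(t)Q(t)$ arising when you test \eqref{eq:p} with $p_t$. You control it by invoking Assumption~A3-$\beta$ (the bound on $|\Delta_\gamma'|$ in \eqref{assump3-beta}). But the theorem claims \eqref{ip-limsup} with no assumptions beyond $\ah\ge na/(2-a)$, and \eqref{ip-bounded} under A2 only; $\gamma(t)$ is the \emph{unknown} part of the trace in IBVP-I, and a bound on $\gamma'$ is not available from A1--A2 alone. Your combination $G=\int_U H+M\int_U z^2$ does not produce any cancellation of $\gamma'$, so as written the proof establishes the result only under the stronger hypothesis A3.

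The paper avoids this by a different choice of Lyapunov functional. Instead of $\int_U H(|\n p|)\,dx$ alone, it works with
\[
I_1[p](t)=\int_U H(|\n p|)\,dx-\int_U K(|\n W|)\n W\cdot\n p\,dx+\Delta_Q(t)\Delta_\gamma(t),
\]
borrowed from \cite{ABI12} (formula (4.46) there). The two extra pieces are chosen precisely so that, upon differentiating in $t$, the contributions proportional to $\gamma'(t)$ cancel: testing with $p_t$ produces $-\gamma'(t)Q(t)$; differentiating $-\int_U K(|\n W|)\n W\cdot\n p$ and integrating by parts against the PSS equation \eqref{eq:w} produces $+\gamma'(t)Q_s$; and differentiating $\Delta_Q\Delta_\gamma$ contributes $\Delta_\gamma'\Delta_Q$, which absorbs the remainder $-\gamma'(t)\Delta_Q(t)$ up to the harmless $A\Delta_Q$. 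What survives is $\Delta_\gamma(t)Q'(t)$, and here $\Delta_\gamma$ (not its derivative) is estimable via \eqref{dgamma-dq} in terms of $\int_U H(|\n p|)\,dx$ and $F_2(t)$, both controlled by A2. The paper then couples this with \eqref{est:zbar-bound-2} for $\int_U z^2$ (exactly your ingredient from step~(a)) to close the Gronwall argument for $I_1[p]$, and extracts $\int_U H(|\n p|)$ from $I_1[p]$ at the end.

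In short: your step~(a) is fine and matches what the paper uses, but your step~(b) needs to be replaced by an identity for the augmented functional $I_1[p]$ so that $\gamma'$ drops out; otherwise you are proving a weaker theorem that additionally assumes A3.
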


\begin{proof}
 In formula (4.46) in \cite{ABI12} we have for $z=q-q_s$:
\begin{align}\label{qt-qst-6}
\int_U q_t^2\,dx&+\frac{d}{dt}\left[I_1[p](t)+\int_Uz^2\,dx\right]\
\leq-C\,I_1[p](t)+C(1+A_{2}(t)),
\end{align}
where
\begin{equation}\label{i1p}
 I_1[p](t)=\int_U H(|\n p|)\,dx-\int_UK(|\nabla W|)\nabla W\nabla p\,dx+\Delta_Q(t)\Delta_\gamma(t).
\end{equation}
Notice that
\begin{align}\label{eq:ddt-qqs-equal}
 \frac{d}{dt}\int_Uz^2\,dx=2\int_U zz_t\,dx
\geq-\int_Uz^2\,dx-\int_U q_t^2\,dx.
\end{align}
Subtracting \eqref{eq:ddt-qqs-equal} from \eqref{qt-qst-6} we get 
\begin{align}\label{qt-qst-7}
\frac{d}{dt}\, I_1[p](t)\leq-C\,I_1[p](t)+\int_Uz^2\,dx+C(1+A_{2}(t)).
\end{align}
The term $\int_Uz^2\,dx$ can be estimated using  \eqref{est:zbar-bound-2} as $\ah>2$. Then in \eqref{qt-qst-7} we have
 \begin{align}\label{qt-qst-8}
\frac{d}{dt}\, I_1[p](t)\leq-C\,I_1[p](t)
+C\left[1+A_{2}(t)+A_1(\ah)\right].
\end{align}

\noindent Applying Gronwall's inequality one has for $t\geq0$
\begin{align}\label{qt-qst-9}
I_1[p](t)\leq
e^{-c_1t}\left[I_1[p](0)
+C\int_0^t e^{c_1\tau}\left(1+A_{2}(\tau)+
A_1(\ah)\right)\,d\tau\right]\notag\\
\leq C(1+A_1(\ah))+e^{-c_1t}I_1[p](0)
+C\int_0^t e^{-c_1(t-\tau)}A_{2}(\tau)\,d\tau.
\end{align}
According to formula (2.35) in \cite{HI12anydegree}
\begin{equation}\label{est:limsup-int-0t}
 \limsup\limits_{t\to\infty} \int_0^te^{-k(t-\tau)}f(\tau)\,d\tau\leq \limsup\limits_{t\to\infty}k^{-1}f(t).
\end{equation}
Then using estimates \eqref{est:limsup-int-0t} and \eqref{est:zbar-bound-2} in \eqref{qt-qst-8} one has
\begin{align}\label{qt-qst-10}
\limsup\limits_{t\to\infty}I_1[p](t)\leq 
C\left(1+A_1(\ah)+\limsup\limits_{t\to\infty}A_{2}(t)\right).
\end{align}

\noindent
Under assumptions $A2$ the terms $\limsup\limits_{t\to\infty}A_{2}(t)$ and $A_1(\ah)$ are bounded. \\
Thus $\limsup\limits_{t\to\infty}I_1[p](t)\leq C$. Then from formula \eqref{i1p} for $I_1[p]$ it follows:
\begin{equation}\label{estim-H-final}
 \int_UH(|\n p|)\,dx\leq \int_UK(|\nabla W|)|\nabla W||\nabla p|\,dx+|\Delta_Q(t)\Delta_\gamma(t)|+C.
\end{equation}
For the first integral in the RHS of \eqref{estim-H-final} we have
\begin{align}\label{ddt-q-qs-5}
 \int_UK(|\nabla W|)|\nabla W||\nabla p|\,dx\leq \e_1\,\int_UH(|\n p|)\,dx+C\|\nabla W\|_{L^{2-a}(U)}+C.
\end{align}
According to Lemma 4.4 in \cite{ABI12} with $F_2(t)$ as in \eqref{def:f2} we have
\begin{equation}\label{dgamma-dq}
 |\Delta_\gamma(t)\Delta_Q(t)|\leq \e_2\,\int_UH(|\n p|)\,dx + C F_2(t)|\Delta_Q(t)|.
\end{equation}
Choosing $\e_1$ and $\e_2$ to be small enough and using \eqref{estim-H-final} in \eqref{qt-qst-10} we obtain \eqref{ip-limsup}. Since $F_2(t)$ is uniformly bounded under assumptions A2, the 
estimate \eqref{ip-bounded} follows.

\end{proof}

\subsection{Estimate of time derivatives}
We will obtain the estimate on the difference of time derivative of fully transient and PSS solutions.

Let $\pbar(x,t)=p(x,t)-\Psi(x,t)$ and $\pbar_s(x,t)=p_s(x,t)-\Phi(x)$, then
\begin{align}
 &\frac{\partial \pbar}{\partial t}=L[p]-\Psi_t(x,t), \quad \pbar|_{\Gamma_i}=\gamma(t);\label{eq:trans-shift}\\
 &\frac{\partial \pbar_s}{\partial t}=-A=L[W], \quad \pbar_s|_{\Gamma_i}=-At.\label{eq:pss-shift}
\end{align}


\begin{theorem}\label{thr:pt-bound-ah}
Suppose that boundary data satisfies Assumptions $A3$-$\beta$.
Then for any $\beta\geq2$
\begin{equation}
\int_U|\pbar_t-\pbar_{s,t}|^\beta\,dx= \int_U|\pbar_t+A|^\beta\,dx\leq C.
\end{equation}
\end{theorem}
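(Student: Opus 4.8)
The plan is to derive an $L^\beta$-bound on the time derivative $\pbar_t + A = \pbar_t - \pbar_{s,t}$ by differentiating the governing equation in time and testing against the right power of the time derivative itself. Set $v = \pbar_t + A = p_t + A$ (using $\pbar_t = p_t - \Psi_t$ and $\pbar_{s,t} = -A$, so really $v = p_t - \Psi_t + A$; I would track the $\Psi_t$ contribution carefully throughout). Differentiating the equation $\pbar_t = L[p] - \Psi_t$ from \eqref{eq:trans-shift} in $t$ gives a linear parabolic equation for $v$ whose principal part is the linearization of the divergence operator $\n\cdot(K(|\n p|)\n p)$ about the current state $p$. Concretely, $\partial_t v = \n\cdot\big(M(\n p)\,\n v\big) - \Psi_{tt}$, where $M(\n p)$ is the (symmetric, positive-definite) Jacobian matrix $\partial_\xi\big(K(|\xi|)\xi\big)$ evaluated at $\xi = \n p$, and whose eigenvalues are controlled above and below by $K(|\n p|)$ and $(1-a)K(|\n p|)$ using \eqref{k-prime}.

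Next I would multiply this equation by $|v|^{\beta-2}v$ and integrate over $U$. The leading term produces, after integration by parts, a coercive contribution
\begin{equation*}
(\beta-1)\int_U |v|^{\beta-2}\,\n v\cdot M(\n p)\,\n v\,dx \geq C(\beta-1)\int_U K(|\n p|)\,|\n v|^2\,|v|^{\beta-2}\,dx \geq 0,
\end{equation*}
which can be discarded (or retained on the favorable side) since it has the correct sign. The boundary term on $\Gamma_i$ must be handled using the time-differentiated Dirichlet/flux data: since $\pbar|_{\Gamma_i} = \gamma(t)$, the trace of $v$ on $\Gamma_i$ is $\gamma'(t) + A - \Psi_t|_{\Gamma_i} = \Delta'_\gamma(t) - \Psi_t|_{\Gamma_i}$, which is exactly what Assumption $A3$-$\beta$ controls through $|\Delta'_\gamma(t)|$ and the integrability of $\|\Psi_{tt}\|_{L^\beta}$ and $\|\n\Psi_t\|_{L^\beta}^2$. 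The source term contributes $\int_U \Psi_{tt}\,|v|^{\beta-2}v\,dx$, bounded via Hölder by $\|\Psi_{tt}\|_{L^\beta}\,\big(\int_U|v|^\beta\big)^{(\beta-1)/\beta}$.

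Assembling these estimates yields a differential inequality of Gronwall type for $y(t) = \int_U |v|^\beta\,dx$, of the shape
\begin{equation*}
\frac{d}{dt}\,y(t) \leq g(t)\,y(t)^{(\beta-1)/\beta} + h(t),
\end{equation*}
where $g$ and $h$ are built from $\|\Psi_{tt}\|_{L^\beta}$, $\|\n\Psi_t\|_{L^\beta}^2$, $|\Delta'_\gamma|$ and $|\Delta_Q|$. Because Assumption $A3$-$\beta$ asserts precisely that $\int_1^\infty\big(\|\Psi_{tt}\|_{L^\beta} + \|\n\Psi_t\|_{L^\beta}^2\big)\,dt \leq C$ together with boundedness of $|\Delta'_\gamma| + |\Delta_Q|$, the forcing terms are integrable in time, and a Gronwall/Bihari argument closes the uniform bound $y(t)\leq C$ for all $t$. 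I expect the main obstacle to be the boundary term on $\Gamma_i$: controlling the trace of $v$ and the associated boundary integral requires combining the trace theorem with the $L^\beta$-in-space, $L^1$-in-time integrability built into $A3$-$\beta$, and matching the mixed gradient factor $|v|^{\beta-2}$ correctly so that the boundary contribution is absorbed rather than left uncontrolled. A secondary technical point is justifying the time differentiation of the equation rigorously (regularity of $p_t$), which I would either assume from the smoothness hypotheses referenced in Sec.~\ref{sec:ibvp-dc-shift} or handle through a difference-quotient approximation.
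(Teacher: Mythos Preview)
Your approach is essentially the paper's: differentiate the equation in $t$, test against $|v|^{\beta-1}\mathrm{sign}(v)$ with $v=\pbar_t+A$, use the structure condition \eqref{k-prime} to extract the coercive term $-(1-a)(\beta-1)\int_U K(|\nabla p|)\,|\nabla\pbar_t|^2\,|v|^{\beta-2}\,dx$, control the remaining cross-terms by H\"older, and close with Gronwall under A3-$\beta$.

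Two corrections are worth flagging. First, the linearization acts on $\nabla p_t=\nabla\pbar_t+\nabla\Psi_t$, not on $\nabla v$; writing $\partial_t v=\nabla\cdot(M(\nabla p)\nabla v)-\Psi_{tt}$ drops a term $\nabla\cdot(M(\nabla p)\nabla\Psi_t)$. After integration by parts this produces the cross-term $\int_U K(|\nabla p|)\,|\nabla\Psi_t|\,|\nabla\pbar_t|\,|v|^{\beta-2}\,dx$, and it is precisely this term (after Cauchy--Schwarz and H\"older with exponents $\beta/(\beta-2)$ and $\beta/2$) that generates the $\|\nabla\Psi_t\|_{L^\beta}^2$ contribution in the Gronwall coefficient. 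Second, your trace computation is off: since $\pbar=p-\Psi$ already has $\Psi$ removed, $\pbar|_{\Gamma_i}=\gamma(t)$ and hence $v|_{\Gamma_i}=\gamma'(t)+A=\Delta'_\gamma(t)$, a function of $t$ alone. The boundary integral therefore collapses to $(\Delta'_\gamma(t))^{\beta-1}\int_{\Gamma_i}(K(|\nabla p|)\nabla p)_t\cdot N\,ds = -(\Delta'_\gamma(t))^{\beta-1}Q'(t)$, with no trace theorem required; the ``main obstacle'' you anticipate is not there. The resulting differential inequality is of linear Gronwall type, $y'\le f_1(t)\,y+f_2(t)$ with $f_1=C\|\Psi_{tt}\|_{L^\beta}+C\|\nabla\Psi_t\|_{L^\beta}^2$ integrable by A3-$\beta$, rather than the Bihari form $y'\le g\,y^{(\beta-1)/\beta}+h$ you wrote (though either closes).
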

\begin{proof}
 Let $\zbar_t=\pbar_t+A$ and notice that $\n\zbar_t=\n\pbar_t$. Subtracting \eqref{eq:trans-shift} and \eqref{eq:pss-shift}, taking derivative in $t$, multiplying on 
$|\zbar_t|^{\beta-1}{\rm sign}(\zbar_t)$, integrating over $U$ and integrating by parts we get
\begin{align}
 \frac1\beta\frac{d}{dt}&\int_U|\zbar_t|^\beta\,dx\label{ddt-pt-1-ah}
 =\int_U(L[p]-\Psi_t-L[W])_t|\zbar_t|^{\beta-1}{\rm sign}(\zbar_t)\,dx \\
=&\int_U(\nabla\cdot (K(|\nabla p|)\nabla p))_t|\zbar_t|^{\beta-1}{\rm sign}(\zbar_t)\,dx
-\int_U\Psi_{tt}|\zbar_t|^{\beta-1}{\rm sign}(\zbar_t)\,dx\notag\\
=&-(\beta-1)\int_U(K(|\nabla p|)\nabla p)_t\cdot \nabla\pbar_t\ |\zbar_t|^{\beta-2}\,dx
-\int_U\Psi_{tt}|\zbar_t|^{\beta-1}{\rm sign}(\zbar_t)\,dx\notag\\
&+\int_{\Gamma_i}(K(|\nabla p|)\nabla p)_t\cdot N |\zbar_t|^{\beta-1}{\rm sign}(\zbar_t)\,ds.\notag
\end{align}
Since $ (\pbar_t+A) |_{\Gamma_i}=\D_\gamma'(t)$ the boundary integral is equal to $(\D_\gamma'(t))^{\beta-1}Q'(t)$.

\noindent The quantity in the first integral in the RHS of \eqref{ddt-pt-1-ah} is equal to
\begin{align*}
 -(K&(|\nabla p|)\nabla p)_t\cdot \nabla\pbar_t|\zbar_t|^{\beta-2}\\
&=-K(|\nabla p|)\nabla p_t\cdot \nabla\pbar_t|\zbar_t|^{\beta-2}-
K'(|\nabla p|)\frac{(\nabla p_t\cdot \nabla p)(\nabla p\cdot\nabla\pbar_t)}{|\nabla p|}|\zbar_t|^{\beta-2}.\notag
\end{align*}
From \eqref{k-prime} it follows that 
\begin{align*}
 K'(|\nabla p|)&\frac{(\nabla p_t\cdot \nabla p)(\nabla p\cdot\nabla\pbar_t)}{|\nabla p|}|\zbar_t|^{\beta-2}
\leq a K(|\nabla p|)|\nabla p_t||\nabla\pbar_t||\zbar_t|^{\beta-2} \\
&\leq a K(|\nabla p|)|\nabla\pbar_t|^2|\zbar_t|^{\beta-2}+a K(|\nabla p|)
|\nabla\Psi_t||\nabla\pbar_t||\zbar_t|^{\beta-2}.\notag
\end{align*}
Then in \eqref{ddt-pt-1-ah} we have
\begin{align}\label{ddt-pt-1a-ah-1}
\frac1\beta\frac{d}{dt}\int_U|\zbar_t|^\beta\,dx
\leq& -(1-a)(\beta-1)\int_UK(|\nabla p|)|\nabla\pbar_t|^2|\zbar_t|^{\beta-2}\,dx\notag\\
&+(1+a)(\beta-1)\int_UK(|\nabla p|)|\nabla\Psi_t||\nabla\pbar_t||\zbar_t|^{\beta-2}\,dx\notag\\
&+\int_U|\zbar_t|^{\beta-1}|\Psi_{tt}|\,dx+|(\Delta'_{\gamma}(t))^{\beta-1}Q'(t)|.
\end{align}

\noindent To estimate the second term in the RHS of \eqref{ddt-pt-1a-ah-1} we apply H\"{o}lder inequality and Young's inequality with $\e=(1+a)/(1-a)$. Since $K(|\nabla p|)$ is bounded (see \eqref{k-ineq}) applying once more H\"{o}lder inequality with powers $\beta/(\beta-2)$ and $\beta/2$ we get
\begin{align}\label{ineq_1}
\int_UK(|\nabla p|)|\nabla\Psi_t||\nabla\pbar_t||\zbar_t|^{\beta-2}\,dx\leq\,\,&\frac{1-a}{2(1+a)}\int_UK(|\nabla p|)|\nabla\pbar_t|^2|\zbar_t|^{\beta-2}\,dx\notag\\
&+C\|\nabla\Psi_t\|_{L^\beta(U)}^2\left(\int_U|\zbar_t|^{\beta}\,dx\right)^{\frac{\beta-2}{\beta}}.
\end{align}
Similar for the third term in the RHS of \eqref{ddt-pt-1a-ah-1} one has
\begin{align}\label{ineq_2}
\int_U|\zbar_t|^{\beta-1}|\Psi_{tt}|\,dx \leq C\|\Psi_{tt}\|_{L^{\beta}(U)}\left(\int_U|\zbar_t|^{\beta}\,dx+1\right).
\end{align}

Then in \eqref{ddt-pt-1a-ah-1} we have
\begin{align}\label{ddt-pt-1a-ah}
\frac1\beta\frac{d}{dt}\int_U|\zbar_t|^\beta\,dx
\leq& -C\int_UK(|\nabla p|)|\nabla\pbar_t|^2|\zbar_t|^{\beta-2}\,dx\notag\\
&+C\|\nabla\Psi_t\|_{L^\beta(U)}^2\left(\int_U|\zbar_t|^{\beta}\,dx+1\right)\notag\\
&+C\|\Psi_{tt}\|_{L^{\beta}(U)}\left(\int_U|\zbar_t|^{\beta}\,dx+1\right)+|(\Delta'_{\gamma}(t))^{\beta-1}Q'(t)|.
\end{align}

Since the first term in the RHS of \eqref{ddt-pt-1a-ah} is negative, it  can be neglected. We then have
\begin{equation}\label{z_1}
\frac{d}{dt} \int_U|\zbar_t|^\beta\,dx\leq f_1(t)\int_U|\zbar_t|^\beta\,dx+ f_2(t),
\end{equation}
where
\begin{align*}
&f_1(t)=C\|\Psi_{tt}\|_{L^{\beta}(U)}+C\|\nabla\Psi_t\|_{L^{\beta}(U)}^2,\\
&f_2(t)=f_1(t)+|(\Delta'_{\gamma}(t))^{\beta-1}Q'(t)|.
\end{align*}
The result follows from the Gronwall's inequality under assumptions A3-$\beta$.

\end{proof}

In order to prove that $\limsup\limits_{t\to\infty}\int_U|\pbar_t+A|^2\,dx= 0$ we will need the Weighted Poincar\'{e} inequality (see Lemma 2.5., \cite{HI12anydegree}):
\begin{lemma}
Let $\xi=\xi(x)\geq0$ and function $u(x)$ be defined on $U$ and is vanishing on the boundary  $\partial U$. Assume $\ah\geq2$. Given two numbers
$\theta$ and $\theta_1$ such that
\begin{equation*}
 \theta>\frac2{(2-a)^*}\quad \text{and}\quad \max\left\{1,\frac{2n}{n\theta+2}\right\}\leq\theta_1<2-a,
\end{equation*}
there is constant $C$ such that
\begin{equation}\label{poincare-w-0}
 \int_U |u|^\alpha\,dx\leq CM_P(\xi,|u|)\left[\int_U K(\xi)|\nabla u|^2|u|^{\alpha-2}\,dx\right]^{\frac1\theta},
\end{equation}
where 
\begin{align}
 &M_P(\xi,|u|)=\left[1+\int_U\xi^{2-a}+|u|^{\theta_2\alpha}\,dx\right]^{\frac{2-\theta_1}{\theta\theta_1}}, \label{def:poinc-Mxiu}\\
&\theta_2=\frac{\theta_1(\theta-1)(2-a)}{2(2-a-\theta_1)}>0.\notag
\end{align}
\end{lemma}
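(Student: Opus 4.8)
The plan is to derive the inequality from the Sobolev--Poincar\'e embedding applied to a suitable power of $u$, followed by two calibrated applications of H\"older's inequality that reproduce the exact exponents appearing in $M_P(\xi,|u|)$ and in $\theta_2$. Since $u$ vanishes on $\partial U$, I would set $w=|u|^{\gamma}$ with $\gamma=\alpha\theta/2$ and invoke the zero-trace embedding $W^{1,\theta_1}_0(U)\hookrightarrow L^{q}(U)$ with target exponent $q=2/\theta$. The hypothesis $\theta_1\geq\frac{2n}{n\theta+2}$ is exactly the condition $q\leq\theta_1^\ast=\frac{n\theta_1}{n-\theta_1}$, so the embedding is admissible, and since $\gamma q=\alpha$ this gives $\int_U|u|^\alpha\,dx\leq C\big(\int_U|u|^{(\gamma-1)\theta_1}|\nabla u|^{\theta_1}\,dx\big)^{q/\theta_1}$.

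Next I would bring in the weight. Factoring the integrand as $(K(\xi)|\nabla u|^2|u|^{\alpha-2})^{\theta_1/2}\cdot K(\xi)^{-\theta_1/2}|u|^{s}$, where $s=(\gamma-1)\theta_1-\frac{(\alpha-2)\theta_1}{2}=\frac{\alpha\theta_1(\theta-1)}{2}$, and applying H\"older with conjugate exponents $2/\theta_1$ and $2/(2-\theta_1)$ (legitimate because $\theta_1<2-a<2$) extracts the target term $\int_U K(\xi)|\nabla u|^2|u|^{\alpha-2}\,dx$ to the power $\theta_1/2$; after raising to the Sobolev power $q/\theta_1$ this produces exactly the factor with power $1/\theta$. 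The complementary H\"older factor contains $K(\xi)^{-\theta_1/(2-\theta_1)}$, which I would bound from below by $K(\xi)\geq C_0(1+\xi)^{-a}$ from \eqref{k-ineq}, converting it into a positive power $(1+\xi)^{a\theta_1/(2-\theta_1)}$. A second H\"older split then separates the $\xi$ and $|u|$ contributions, and I would choose its exponent $r$ so that the surviving power of $(1+\xi)$ equals $2-a$; this forces $r=\frac{(2-a)(2-\theta_1)}{a\theta_1}$, whence $r-1=\frac{2(2-a-\theta_1)}{a\theta_1}$ and the conjugate $r'=\frac{(2-a)(2-\theta_1)}{2(2-a-\theta_1)}$.

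The final step is bookkeeping: the leftover power on $|u|$ is $\frac{2s}{2-\theta_1}r'$, which simplifies to $\theta_2\alpha$ with $\theta_2=\frac{\theta_1(\theta-1)(2-a)}{2(2-a-\theta_1)}$, matching \eqref{def:poinc-Mxiu}; the elementary bound $X^{1/r}Y^{1/r'}\leq X+Y$ then assembles the bracket $1+\int_U(\xi^{2-a}+|u|^{\theta_2\alpha})\,dx$ raised to the power $\frac{(2-\theta_1)q}{2\theta_1}=\frac{2-\theta_1}{\theta\theta_1}$, which is precisely $M_P(\xi,|u|)$. The main obstacle is forcing every exponent to align at once: the single free choice $q=2/\theta$ must simultaneously deliver the gradient power $1/\theta$ and the $M_P$-exponent $\frac{2-\theta_1}{\theta\theta_1}$, while the second H\"older exponent must land the weight exactly on $\xi^{2-a}$ and leave precisely $\theta_2\alpha$ on $|u|$. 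This is exactly where the hypotheses enter: $\theta_1<2-a$ keeps $2-a-\theta_1>0$ so that $r'$ is finite and $\theta_2>0$ (with $\theta>1$), whereas $\theta_1\geq\frac{2n}{n\theta+2}$, equivalently $\theta\geq\frac{2}{\theta_1^\ast}$, secures admissibility of the Sobolev target $q=2/\theta$ and is consistent with $\theta>\frac{2}{(2-a)^\ast}$ since $\theta_1<2-a$.
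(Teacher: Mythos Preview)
The paper does not supply a proof of this lemma; it is quoted verbatim as Lemma~2.5 from \cite{HI12anydegree} and used as a black box. Your sketch is a correct reconstruction of the argument and, as far as one can tell, follows the same route as the original: apply the Sobolev--Poincar\'e inequality to $|u|^{\alpha\theta/2}$ in $W^{1,\theta_1}_0$, then peel off the weighted gradient term by H\"older with exponents $2/\theta_1$ and $2/(2-\theta_1)$, bound $K(\xi)^{-1}$ via \eqref{k-ineq}, and finish with a second H\"older split calibrated so that the weight lands on $\xi^{2-a}$. All of your exponent computations check out, including the identification $\theta_2\alpha=\frac{2s}{2-\theta_1}r'$ and the outer power $\frac{2-\theta_1}{\theta\theta_1}$ on $M_P$, and you correctly isolate where each hypothesis enters: $\theta_1\ge\frac{2n}{n\theta+2}$ is exactly $2/\theta\le\theta_1^\ast$, while $\theta_1<2-a$ guarantees $r>1$ and $\theta_2>0$.
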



\vspace{0.3cm}
\begin{theorem}\label{thr:pt+A-to0}
If $Q(t)$ and $\Psi(x,t)$ satisfy assumptions $A3$-$\beta$ and $A4$, then
\begin{equation*}
\limsup\limits_{t\to\infty}\int_U|\pbar_t-\pbar_{s,t}|^2\,dx= 
\limsup\limits_{t\to\infty}\int_U|\pbar_t+A|^2\,dx= 0.
\end{equation*}
\end{theorem}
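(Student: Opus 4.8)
The plan is to re-run the energy computation of Theorem~\ref{thr:pt-bound-ah} with the exponent $\beta=2$, but this time \emph{retaining} the dissipative term rather than discarding it, and then to convert that term into coercive control of $\int_U|\zbar_t|^2\,dx$ via the Weighted Poincar\'e inequality \eqref{poincare-w-0}. With $\zbar_t=\pbar_t+A$ and $\n\zbar_t=\n\pbar_t$, repeating the steps leading to \eqref{ddt-pt-1a-ah} for $\beta=2$ (so that $|\zbar_t|^{\beta-2}=1$) gives
\begin{equation*}
\tfrac12\frac{d}{dt}\int_U|\zbar_t|^2\,dx\leq -C\int_U K(|\n p|)|\n\pbar_t|^2\,dx + g(t),
\end{equation*}
where the forcing $g(t)$ collects $C\int_U|\n\Psi_t|^2\,dx$, the term $C\|\Psi_{tt}\|_{L^2}(\int_U|\zbar_t|^2\,dx+1)$, and the boundary contribution $\D'_\gamma(t)Q'(t)$. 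By Assumptions $A4$ in \eqref{assump4} each of $\int_U|\n\Psi_t|^2$, $\int_U|\Psi_{tt}|^2$ and $|Q'(t)|$ tends to $0$, while $\int_U|\zbar_t|^2\,dx$ is uniformly bounded by Theorem~\ref{thr:pt-bound-ah}; hence $g(t)\to0$ as $t\to\infty$.

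Next I would apply \eqref{poincare-w-0} with $\alpha=2$ and weight $\xi=|\n p|$ to bound the dissipation from below by a power of $y(t):=\int_U|\zbar_t|^2\,dx$:
\begin{equation*}
\int_U K(|\n p|)|\n\pbar_t|^2\,dx\geq C\left(\frac{y(t)}{M_P}\right)^{\theta}.
\end{equation*}
The crucial point is that the weight $M_P$ in \eqref{def:poinc-Mxiu} stays uniformly bounded: its ingredient $\int_U|\n p|^{2-a}\,dx$ is controlled by \eqref{ip-bounded} of Theorem~\ref{thrm:bdd-h-gradp}, and $\int_U|\zbar_t|^{\theta_2\alpha}\,dx$ is controlled by Theorem~\ref{thr:pt-bound-ah}, which furnishes $L^\beta$ bounds on $\zbar_t$ for every $\beta\ge2$. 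Feeding this back, and absorbing the harmless $C\|\Psi_{tt}\|_{L^2}\,y$ term into the forcing using the boundedness of $y$, produces a scalar differential inequality $y'(t)\leq -C\,y(t)^{\theta}+\tilde g(t)$ with $\tilde g(t)\to0$.

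To conclude I would invoke the comparison Lemma~\ref{lem:A1-gronw} with $\phi(s)=s^{1/\theta}$, so that $\phi^{-1}(y)=y^{\theta}$: since $\tilde g(t)\to0$, the Lemma yields $\limsup_{t\to\infty}y(t)\leq\phi(0)=0$, which is exactly $\limsup_{t\to\infty}\int_U|\pbar_t+A|^2\,dx=0$.

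I expect the main obstacle to be the boundary behaviour of $\zbar_t$. The inequality \eqref{poincare-w-0} is stated for a function vanishing on $\partial U$, whereas $\zbar_t$ carries the spatially constant but \emph{nonzero} trace $\D'_\gamma(t)$ on $\G_i$. The natural remedy is to apply \eqref{poincare-w-0} to the shift $\zbar_t-\D'_\gamma(t)$, which vanishes on $\G_i$ and has the same gradient; the price is a discrepancy of order $|\D'_\gamma(t)|^2$, so the scheme above only delivers $\limsup y\le C\,\limsup|\D'_\gamma(t)|^2$. Closing the proof therefore hinges on showing $\D'_\gamma(t)\to0$, which must be extracted from the flux control in $A4$ (via $\D_Q,Q'\to0$ and the relation tying the average trace $\gamma(t)$ to the total flux $Q(t)$) together with the already-vanishing boundary term $\D'_\gamma(t)Q'(t)$ in $g(t)$; reconciling this with the other estimates is the delicate step.
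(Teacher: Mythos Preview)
Your overall strategy—keep the dissipative term in the $\beta=2$ energy inequality, bound it below by a power of $y(t)=\int_U|\zbar_t|^2\,dx$ via the weighted Poincar\'e inequality applied to the shift $\zbar_t-\D'_\gamma(t)$, and finish with Lemma~\ref{lem:A1-gronw}—is exactly what the paper does. You also correctly locate the one real difficulty: the trace $\D'_\gamma(t)$ of $\zbar_t$ on $\G_i$ is not assumed to vanish, and after shifting you are left with an extra $C|\D'_\gamma(t)|^2$.

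Where you leave a gap is in your proposed resolution. You suggest showing $\D'_\gamma(t)\to0$ \emph{directly} from the flux assumptions in $A4$; but in IBVP-I the function $\gamma(t)$ is \emph{unknown}, determined only implicitly by $Q(t)$, and nothing in $A3$--$A4$ gives you $\D'_\gamma\to0$ a priori (only $|\D'_\gamma|\le C$). The paper does \emph{not} try to prove this. Instead it absorbs $|\D'_\gamma(t)|^2$ back into the dissipation: since $q_t|_{\Gamma_i}=\D'_\gamma(t)+F'_1(t)$, $\int_U q_t\,dx=0$, and $\nabla q_t=\nabla\pbar_t$, the trace theorem plus Poincar\'e and Cauchy--Schwarz with weight $K(|\n p|)$ give
\[
|\D'_\gamma(t)|^2\;\le\; C\Big(\int_U|\nabla q_t|\,dx\Big)^2+C|F'_1(t)|^2
\;\le\; C\int_U K(|\n p|)|\n\pbar_t|^2\,dx\cdot\int_U K(|\n p|)^{-1}\,dx+C|F'_1(t)|^2,
\]
and $\int_U K(|\n p|)^{-1}\,dx$ is bounded by \eqref{k-ineq} and \eqref{ip-bounded}. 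Thus the offending term is controlled by the very dissipation $I(t)=\int_U K(|\n p|)|\n\pbar_t|^2\,dx$ you already have, plus $|F'_1(t)|^2$, which does go to zero under $A4$. After this, $y(t)\le C(I(t)^{1/\theta}+I(t))+C|F'_1(t)|^2$, you invert to get $I(t)\ge C\varphi^{-1}(y(t))-C|F'_1(t)|^2$, substitute back into the energy inequality, and apply Lemma~\ref{lem:A1-gronw} exactly as you intended. This trace/Poincar\'e trick on $q_t$ is the missing ingredient.
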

\begin{proof}
In (5.15) of \cite{ABI12} we have 
 \begin{align}\label{ddt-pt-3}
\frac{d}{dt}\int_U&(\pbar_t+A)^2\,dx\leq-C\int_UK(|\nabla p|)|\nabla\pbar_t|^2dx+\e_2\int_U|\pbar_t+A|^2\,dx
+C A_3(\e_2,t),
\end{align}
where
the constant $C$ depends on $\int_UH(|\n p|)\,dx$ and is bounded under assumptions $A2$.

We will use Weighted Poincar\'{e} inequality \eqref{poincare-w-0} to estimate $\int_UK(|\nabla p|)|\nabla\pbar_t|^2dx$ in terms of $\int_U|\pbar_t+A|^2\,dx$.
Namely, let $u=\pbar_t+A-\D'_\g(t)$ and  $\xi(x)=|\nabla p|$. Then $u|_{\Gamma_i}=0$ and $\nabla u=\nabla\pbar_t$. 
Then we have 
\begin{align}\label{pta-0-1}
 \int_U|\pbar_t+A|^2\,dx&=\int_U|u+\D'_\g(t)|^2\,dx\leq \frac32\int_U|u|^2\,dx+\frac32\int_U|\D'_\g(t)|^2\,dx\\
 &\leq C M_P(\xi,|u|)\cdot\left[\int_U K(\nabla p)|\nabla \pbar_t|^2\,dx\right]^{\frac1\theta}+\frac{|U|}2|\D'_\g(t)|^2.\notag
\end{align}
Here 
\begin{align*}
 M_P(|\nabla p|,|u|)=\left[1+\int_U|\nabla p|^{2-a}+|\pbar_t+A-\D'_\g(t)|^{2\theta_2}\,dx\right]^{\frac{2-\theta_1}{\theta\theta_1}}\leq C
\end{align*}
in view of Theorem~\ref{thrm:bdd-h-gradp} under Assumptions A2 and Theorem~\ref{thr:pt-bound-ah} under Assumptions $A3$-$\beta$.

Thus
\begin{align}
 \int_U|\pbar_t+A|^2\,dx\leq C \left[\int_U K(\nabla p)|\nabla \pbar_t|^2\,dx\right]^{\frac1\theta}+\frac{|U|}2|\D'_\g(t)|^2.\notag
\end{align}
On the other hand  note that $\int_U q_t\,dx=0$ and $q_t|_{\Gamma_i}=(q_t-q_{s,t})|_{\Gamma_i}=\Delta'_\gamma(t)+F'_1(t)$. Here
$F'_1(t)=\D_Q(t)+\int_U\D\Psi_t\,dx$ (see \eqref{def:f1}). Notice that $\limsup\limits_{t\to\infty}|F'_1(t)|=0$ under conditions A4.
Then by Trace theorem, Poincar\'{e} inequality and the fact that $\int_U q_t dx=0$ we have
\begin{equation}
 |\Delta'_{\gamma}(t)|^2\leq C\left(\int_{\Gamma_i}|q_t|\,ds+C|F'_1(t)|\right)^2\leq C \left(\int_U|\nabla q_t|\,dx\right)^2+C|F'_1(t)|^2.\notag
\end{equation}

Next, as $\nabla q_t= \nabla\pbar_t$, applying H\"{o}lder inequality we have 
\begin{equation}
 \left(\int_U|\nabla q_t|\,dx\right)^2 \leq C \int_UK(|\nabla p|)|\nabla\pbar_t|^2\,dx\cdot
\int_UK(|\nabla p|)^{-1}\,dx.
\end{equation}
Notice that from \eqref{k-ineq} and H\"{o}lder inequality it follows
\begin{align}\label{k--1}
 \int_UK(|\nabla p|)^{-1}\,dx&\leq C\int_U(1+|\nabla p|)^{a}\,dx\leq  C + C\int_U|\nabla p|^{2-a}\,dx\leq C,
\end{align}
The last estimate in \eqref{k--1} follows from  Theorem~\ref{thrm:bdd-h-gradp} under conditions A2.\\
Thus 
\begin{equation}
 |\Delta'_{\gamma}(t)|^2\leq C\int_UK(|\nabla p|)|\nabla\pbar_t|^2\,dx+C|F'_1(t)|^2.\notag
 \end{equation}
Then in \eqref{pta-0-1} we have
\begin{align}\label{pta-0-2}
 \int_U|\pbar_t+A|^2\,dx
 \leq  C\,\left(I^{\frac1\theta}(t)+I(t)\right)+C|F'_1(t)|^2=C\cdot \varphi\Big(I(t)+|F'_1(t)|^2\Big),
\end{align}
where
\begin{equation}
 \varphi(s)=s^{\frac1{\theta}}+s+|F'_1(t)|^2;\quad I(t)=\int_U K(|\nabla p|)|\nabla \pbar_t|^2\,dx.
\end{equation}
Due to condition on $|F'_1(t)|$  
\begin{equation*}
 I(t)\geq C\varphi^{-1}\left(\int_U|\pbar_t+A|^2\,dx\right)-C|F'_1(t)|^2.
\end{equation*}

Then due to definition \eqref{def:a3} of $A_3(\e_2,t)$ one has in \eqref{ddt-pt-3} 
\begin{align}\label{ddt-pt-4-new}
\frac{d}{dt}\int_U(\pbar_t+A)^2\,dx\leq&-C\varphi^{-1}\left(\int_U|\pbar_t+A|^2\,dx\right)
+\e_2\int_U|\pbar_t+A|^2\,dx+CA_3(\e_2,t).
\end{align}

Let $y(t)=\int_U|\pbar_t+A|^2\,dx$. Due to Theorem \eqref{thr:pt-bound-ah}  one has in \eqref{ddt-pt-4-new}
\begin{align}
y'(t)\leq -C f(y(t))+CA_3(\e_2,t)+C\e_2,
\end{align}
where
\begin{align}
f(s)=\varphi^{-1}(s),\ f(s)>0 \quad \text{for} \quad s>0,\quad \text{and}\quad  f(0)=0.
\end{align}
Under the Assumptions $A3$-$\beta$ we have $0\leq y(t)\leq C$. According to Lemma~\ref{lem:A1-gronw} we will get 
$\limsup_{t\to\infty}y(t)\le C\varphi\big(\limsup_{t\to\infty}(A_3(\e_2,t)+\e_2\big)$. Under Assumptions A4
$\limsup_{t\to\infty} A_3(\e_2,t)=0$ and therefore 
$$\limsup_{t\to\infty}y(t)\le \varphi(\e_2).$$
Taking in above $\e_2\to 0$ we complete   proof of the Theorem  \ref{thr:pt+A-to0}. 

\end{proof}

Finally Theorems~\ref{th:zbar-to0}, \ref{thrm:bdd-h-gradp} and \ref{thr:pt+A-to0} and Lemma~4.1, \cite{ABI12}, under the conditions A1-A5 allow us to conclude that $\|\nabla(p-p_s)\|_{L^{2-a}}\to 0$ as $t\to\infty$ in Lemma~\ref{lem:npps0}. Combining  it with Lemma~\ref{lem:pi-diff} we get the main result for the solutions of IBVP-I: 
\begin{theorem}\label{th:pi-pipss-0}
 If boundary data $Q(t)$ and $\Psi(x,t)$ satisfy assumptions $A5$
then 
\begin{equation}
 J_g(t)-J_{g,PSS}\to 0 \quad\text{as}\quad t\to\infty.
\end{equation}
\end{theorem}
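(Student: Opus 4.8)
The plan is to assemble the statement from the machinery already in place, so that no genuinely new estimate is needed. By Lemma~\ref{lem:pi-diff} it suffices to establish the two limits $\D_Q(t)\to0$ and $\|\n(p-p_s)\|_{L^{2-a}(U)}\to0$ as $t\to\infty$. The first is immediate: $A5$ subsumes $A4$, and $A4$ asserts in particular $\limsup_{t\to\infty}\D_Q^2(t)=0$, hence $\D_Q(t)\to0$. The second I would obtain from Lemma~\ref{lem:npps0}, whose hypotheses I now check one by one against $A5$.

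The four numbered bounds \eqref{ineq:zah-bound}--\eqref{ineq:nps-bound} are precisely the conclusions of the preceding results. Estimate \eqref{ineq:zah-bound} follows from Theorem~\ref{th:zbar-to0} (whose hypothesis $A1$ is subsumed by $A5$), combined with H\"older's inequality on the bounded domain $U$ to pass from the $L^\ah$ bound to the $L^2$ bound, using $\ah>2$. Estimate \eqref{ineq:np-bound} is Theorem~\ref{thrm:bdd-h-gradp} under $A2$, again subsumed by $A5$; estimate \eqref{ineq:pta-0} is Theorem~\ref{thr:pt+A-to0} under $A3$-$\beta$ and $A4$; and \eqref{ineq:nps-bound} is Lemma~4.1 of \cite{ABI12}, which depends only on the fixed PSS data $\Phi,Q_s$ and so holds unconditionally.

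The one place where the assumptions interact nontrivially, and where I expect the verification to need real care, is condition \eqref{cond:someto0}, namely $\|\n(\D_\Psi)\|_{L^2}+|\D_Q(t)|\,|F_1(t)+1|\to0$. The first summand vanishes directly from the extra clause $\limsup_{t\to\infty}\|\n(\D_\Psi)\|_{L^2}=0$ that $A5$ adds on top of $A3$ and $A4$. The second summand is a product of a vanishing factor and a bounded one: $|\D_Q(t)|\to0$ by $A4$, while $|F_1(t)+1|$ stays bounded because $A2$ gives $\limsup_{t\to\infty}|F_1(t)|\le C$. Hence the product tends to zero. This is the step where one must track the nested structure $A5\Rightarrow A4\Rightarrow A2\Rightarrow A1$ together with the additional $L^2$-gradient clause of $A5$, rather than simply quoting one prior result.

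With every hypothesis of Lemma~\ref{lem:npps0} verified, that lemma delivers $\|\n(p-p_s)\|_{L^{2-a}(U)}\to0$. Combining this with $\D_Q(t)\to0$ in Lemma~\ref{lem:pi-diff} yields $J_g(t)-J_{g,PSS}\to0$ as $t\to\infty$, which is the assertion of the theorem.
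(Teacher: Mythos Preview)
Your proposal is correct and follows essentially the same route as the paper: the paper's proof is simply the sentence preceding the theorem, which cites Theorems~\ref{th:zbar-to0}, \ref{thrm:bdd-h-gradp}, \ref{thr:pt+A-to0} and Lemma~4.1 of \cite{ABI12} to feed Lemma~\ref{lem:npps0}, and then invokes Lemma~\ref{lem:pi-diff}. You have reproduced this and, in addition, spelled out explicitly why condition \eqref{cond:someto0} holds under the nested implications $A5\Rightarrow A4\Rightarrow A2$ together with the extra $\|\n(\D_\Psi)\|_{L^2}\to0$ clause of $A5$---a verification the paper leaves implicit.
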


\vspace{0.3cm}

\section{IBVP-II: Asymptotic convergence of Diffusive Capacity}\label{sec:dirichlet}

Let $p(x,t)$ be the solution of IBVP-II \eqref{eq:p}, \eqref{bc:0-neuman}, \eqref{initialcond} with Dirichlet boundary condition \eqref{bc:dir} with boundary data $\Psi(x,t)$. Let $J_g(t)$ be the corresponding diffusive capacity. 
Let $p_s(x,t)=-At+W(x)$, $W(x)|_{\Gamma_i}=\varphi(x)$, be the PSS solution of this IBVP defined by the boundary data $-At$, $A=Q_s/|U|$, and $\Phi(x)$. Let $J_{g,PSS}$ be the corresponding diffusive capacity.

\subsection{Assumptions on the boundary data}
Let
\begin{align}
D(\beta,t)=\|\Psi_{tt}\|_{L^{\beta}(U)}+\|\nabla\Psi_t\|_{L^{\beta}(U)}^2 \quad\text{for any $\beta\geq2$}.\label{dir:d1}
\end{align}

We also will use some notations from \cite{HI12anydegree}.
 For $\ah\geq\frac{na}{2-a}$, $b=\frac{\ah(2-a)}{2}$, $r_0=\frac{n(2-a)}{(2-a)(n+1)-n}$, let
\begin{align} 
&A(\ah,t)=
\|\n\Psi\|_{L^b}^{\ah-a}
+\left[\int_U|\Psi_t|^{\ah}\,dx\right]^{\frac{\ah-a}{\ah(1-a)}};\\
 &G_4(t)= \int_U|\n\Psi|^2\,dx+\left[\int_U|\Psi_t|^{r_0}\,dx\right]^{\frac{2-a}{r_0(1-a)}}
+\|\Psi_t\|_{L^{r_0}}\notag\\
&\hspace{2.cm}+\int_U|\n\Psi_t|^2\,dx+\int_U|\Psi_t|^2\,dx+\int_U|\Psi_{tt}|^2\,dx.
\end{align}

 The results in this section are obtained under the following conditions on the boundary data 

 {\bf Assumptions D1:}
\begin{equation}\label{assumpD1}
\int_{1}^{\infty}D(\beta,t)\,dt<\infty\quad\text{for any $\beta\geq2$}.
\end{equation}

{\bf Assumptions D2:}
\begin{align}
& \limsup\limits_{t\to\infty}\left(A(\ah,t)+G_4(t)\right)\leq C;\label{assumpD2-1}\\
 & \limsup_{t\to\infty}\left(D(2,t)+\int_U|\Psi_t+A|^2\,dx\right)=0. \label{assumpD2}
\end{align}

{\bf Assumptions D3:}
\begin{align} 
& \limsup\limits_{t\to\infty}A(\ah,t)+\|\n\Phi(x)\|_{L^b}^{\ah-a}\leq C; \label{assumpD3-1}\\
& \limsup\limits_{t\to\infty}\|\nabla(\Delta_\Psi)\|_{L^{2}(U)}=0.\label{assumpD3-2}
\end{align}

\vspace{0.2cm}


According to Lemma~\ref{lem:pi-diff} in order to prove $J_g(t)\to J_{g,PSS}$ as $t\to0$ 
we need to prove the following estimates: 
 \begin{align}
  &\Delta_Q(t)\to 0\quad\text{ as}\quad t\to\infty\quad\text{see Lemma~\ref{lem:deltaq-0};}\\
  &\int_U|\nabla(p-p_s)|^{2-a}\,dx\to0\quad\text{ as}\quad t\to\infty\quad\text{seeTheorem~\ref{th:dir:p-ps}.}
 \end{align}

\subsection{Estimates on the solution of IBVP-II}
Along with the solutions $p$ and $p_s$ we will use the shifts  $\pt(x,t)=p(x,t)-\Psi(x,t)$ and $\pt_s(x,t)=p_s(x,t)-\Phi(x)+At$. These shifts Obvioulsly satisfy
\begin{align}
 &\frac{\partial \pt}{\partial t}=L[p]-\Psi_t(x,t), \quad 
\pt|_{\Gamma}=0;\label{eq:dir-trans-shift}\\
 &\frac{\partial \pt_s}{\partial t}=0=L[W]+A, \quad \pt_s|_{\Gamma}=0.\label{eq:dir-pss-shift}
\end{align}

\begin{lemma}\label{thrm:pt-ah-bound-dir}
 Suppose boundary data $\Psi(x,t)$ satisfies assumptions $D1$, 
then 
\begin{equation}
 \int_U|\pt_t|^\beta\,dx\leq C\quad\text{for any $\beta\geq2$.}
\end{equation}
\end{lemma}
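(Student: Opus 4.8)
The plan is to adapt the energy argument of Theorem~\ref{thr:pt-bound-ah} from the flux case to the Dirichlet case, exploiting the fact that here the shift $\pt$ vanishes on the \emph{entire} boundary $\Gamma$, which makes the boundary contributions disappear. Since $\pt_{s,t}=0$ by \eqref{eq:dir-pss-shift}, I would simply bound $\int_U|\pt_t|^\beta\,dx$ directly. First I would differentiate \eqref{eq:dir-trans-shift} in $t$ to get $\pt_{tt}=\big(\n\cdot(K(|\n p|)\n p)\big)_t-\Psi_{tt}$, then multiply by $|\pt_t|^{\beta-1}\mathrm{sign}(\pt_t)$, integrate over $U$, and integrate by parts in the divergence term. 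Because $\pt|_{\Gamma}=0$ for all $t$ implies $\pt_t|_{\Gamma}=0$, the resulting boundary integral over $\Gamma$ vanishes identically. This is the key structural simplification relative to the flux case, where the analogous boundary term $(\Delta'_{\gamma}(t))^{\beta-1}Q'(t)$ survived in \eqref{ddt-pt-1a-ah}.

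The bulk of the work lies in estimating the surviving volume term $-(\beta-1)\int_U(K(|\n p|)\n p)_t\cdot\n\pt_t\,|\pt_t|^{\beta-2}\,dx$. Expanding the time derivative and using $\n p_t=\n\pt_t+\n\Psi_t$, the $K'$ contribution is controlled exactly as in the flux case via \eqref{k-prime}, producing a favorable negative leading term $-C\int_U K(|\n p|)|\n\pt_t|^2|\pt_t|^{\beta-2}\,dx$ together with a cross term carrying $|\n\Psi_t|$. Part of this cross term is absorbed into the leading term by Hölder's inequality and Young's inequality with $\e=(1+a)/(1-a)$, then using the uniform bound \eqref{k-ineq} on $K$ and a further Hölder step with exponents $\beta/(\beta-2)$ and $\beta/2$, precisely mirroring \eqref{ineq_1}; the $\Psi_{tt}$ term is treated by Hölder as in \eqref{ineq_2}. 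After dropping the nonpositive leading term, this yields a scalar differential inequality
\[
\frac{d}{dt}\int_U|\pt_t|^\beta\,dx \leq f_1(t)\int_U|\pt_t|^\beta\,dx + f_2(t),
\]
with $f_1(t)=C\|\Psi_{tt}\|_{L^\beta(U)}+C\|\n\Psi_t\|_{L^\beta(U)}^2$ and $f_2(t)=f_1(t)$; crucially, $f_2$ contains \emph{no} boundary-data term here.

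I would then close the argument with Gronwall's inequality on $[1,t]$: Assumption $D1$ \eqref{assumpD1} asserts precisely $\int_1^\infty D(\beta,t)\,dt<\infty$, which is exactly $\int_1^\infty f_1(t)\,dt<\infty$ and $\int_1^\infty f_2(t)\,dt<\infty$. Hence the exponential factor $\exp\!\big(\int_1^t f_1\big)$ stays bounded and the accumulated source $\int_1^t f_2$ remains finite, giving $\int_U|\pt_t|^\beta\,dx\le C$ uniformly in $t$ for every $\beta\ge2$.

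The main obstacle I anticipate is not the Gronwall step but the careful bookkeeping of the degenerate weight $K(|\n p|)|\pt_t|^{\beta-2}$ in the cross-term estimate: one must split the powers of $|\pt_t|$ so that the Hölder/Young inequalities reproduce exactly the absorbable quantity $\int_U K(|\n p|)|\n\pt_t|^2|\pt_t|^{\beta-2}\,dx$ and leave behind only $\|\n\Psi_t\|_{L^\beta(U)}^2\big(\int_U|\pt_t|^\beta\,dx\big)^{(\beta-2)/\beta}$, which is Gronwall-compatible after the elementary bound $s^{(\beta-2)/\beta}\le s+1$. I would also verify that $\int_U|\pt_t|^\beta\,dx$ is finite at the initial reference time, which follows from the assumed regularity of the solution.
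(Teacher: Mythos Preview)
Your proposal is correct and follows essentially the same approach as the paper: differentiate \eqref{eq:dir-trans-shift} in $t$, test against $|\pt_t|^{\beta-1}\mathrm{sign}(\pt_t)$, observe that the boundary term vanishes because $\pt_t|_{\Gamma_i}=0$, then re-use the cross-term estimates \eqref{ineq_1}--\eqref{ineq_2} to arrive at the differential inequality $\frac{d}{dt}\int_U|\pt_t|^\beta\,dx\le D(\beta,t)\int_U|\pt_t|^\beta\,dx+D(\beta,t)$ and conclude by Gronwall under Assumption~$D1$. The only cosmetic difference is that the paper writes both coefficients as $D(\beta,t)$ rather than your $f_1,f_2$.
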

\begin{proof}
 Similar to \eqref{ddt-pt-1-ah} we have 
 \begin{align}\label{ddt-pt-dir}
\frac1\beta\frac{d}{dt}\int_U|\pt_t|^\beta\,dx
\leq& -(1-a)(\beta-1)\int_UK(|\nabla p|)|\nabla\pt_t|^2|\pt_t|^{\beta-2}\,dx\notag\\
&+(1+a)(\beta-1)\int_UK(|\nabla p|)|\nabla\Psi_t||\nabla\pt_t||\pt_t|^{\beta-2}\,dx\notag\\
&+\int_U|\pt_t|^{\beta-1}|\Psi_{tt}|\,dx.
\end{align}
Similar to \eqref{ineq_1}  - \eqref{z_1} we have
\begin{equation}
\frac{d}{dt}\int_U|\pt_t|^\beta\,dx\leq D(\beta,t)\int_U|\pt_t|^\beta\,dx+D(\beta,t).
\end{equation}
The result follows from the Gronwall's inequality under assumptions D1.
\end{proof}



\vspace{0.5cm}
Further we will need the following result obtained in \cite{HI12anydegree}:
under assumptions D2.1 \eqref{assumpD2-1} (see \cite{HI12anydegree}, Th. 4.5)
\begin{equation}
 \int_U|\n p|^{2-a}\,dx\leq C\int_UH(|\n p|)\,dx+C\leq C. \label{dir:int_h}
\end{equation}


\begin{lemma}\label{lem:deltaq-0}
Suppose boundary data $\Psi(x,t)$ satisfies assumptions $D1$ and $D2$. Then 
 \begin{align}\label{est:pta2}
  \limsup\limits_{t\to\infty}\left(|\Delta_Q(t)| + C\int_U|p_t+A|^2\,dx\right)=0 \quad\text{as}\quad t\to\infty. 
 \end{align}
\end{lemma}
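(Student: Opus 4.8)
The plan is to reduce both terms in \eqref{est:pta2} to the single quantity $\int_U|\pt_t|^2\,dx$, where $\pt=p-\Psi$ is the shift from \eqref{eq:dir-trans-shift}, and then to run an energy argument for $\int_U|\pt_t|^2\,dx$ parallel to Theorem~\ref{thr:pt+A-to0}. First I dispose of the flux. Integrating equation \eqref{eq:p} over $U$, using the impermeability condition \eqref{bc:0-neuman} on $\Gamma_e$ (so that $K(|\n p|)\n p\cdot N=0$ there) and the definition $Q(t)=-\int_{\Gamma_i}K(|\n p|)\n p\cdot N\,ds$, I get $\int_U p_t\,dx=-Q(t)$, while $\int_U p_{s,t}\,dx=-A|U|=-Q_s$ since $p_{s,t}=-A$. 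Subtracting yields the identity
\begin{equation*}
\Delta_Q(t)=-\int_U(p_t+A)\,dx,
\end{equation*}
so that $|\Delta_Q(t)|\leq|U|^{1/2}\left(\int_U|p_t+A|^2\,dx\right)^{1/2}$ by Cauchy--Schwarz. Hence the entire lemma follows once $\int_U|p_t+A|^2\,dx\to0$.

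Since $p_t=\pt_t+\Psi_t$, I write $p_t+A=\pt_t+(\Psi_t+A)$ and estimate $\int_U|p_t+A|^2\,dx\leq 2\int_U|\pt_t|^2\,dx+2\int_U|\Psi_t+A|^2\,dx$. The last integral tends to zero by Assumptions $D2$ \eqref{assumpD2}, so it remains to prove $\int_U|\pt_t|^2\,dx\to0$. This is exactly the Dirichlet analogue of Theorem~\ref{thr:pt+A-to0}, and is in fact simpler: since $\pt|_\Gamma=0$ for all $t$, also $\pt_t|_\Gamma=0$, so the boundary contribution which produced the $\Delta'_\gamma$ and $Q'$ terms in the flux case vanishes identically here (as already reflected in \eqref{ddt-pt-dir}).

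To obtain the governing inequality I would differentiate \eqref{eq:dir-trans-shift} in $t$, multiply by $\pt_t$, integrate over $U$ and integrate by parts, the boundary term dropping because $\pt_t|_\Gamma=0$. Expanding $(K(|\n p|)\n p)_t$, using the bound \eqref{k-prime} on $K'$ and $\n p_t=\n\pt_t+\n\Psi_t$, and absorbing the cross term by Young's inequality exactly as in \eqref{ineq_1}, I arrive at
\begin{equation*}
\frac{d}{dt}\int_U\pt_t^2\,dx\leq -C\int_U K(|\n p|)|\n\pt_t|^2\,dx+\e\int_U\pt_t^2\,dx+C\,D(2,t),
\end{equation*}
where the forcing collects only the $\|\Psi_{tt}\|_{L^2}$ and $\|\n\Psi_t\|_{L^2}^2$ contributions and so is controlled by $D(2,t)$, which vanishes at infinity under $D2$. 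I then apply the Weighted Poincar\'e inequality \eqref{poincare-w-0} with $u=\pt_t$ and $\xi=|\n p|$ (admissible precisely because $\pt_t|_\Gamma=0$) to convert the negative term into $-C\varphi^{-1}\big(\int_U\pt_t^2\,dx\big)$; here the weight $M_P(|\n p|,|\pt_t|)$ is bounded because $\int_U|\n p|^{2-a}\,dx\leq C$ by \eqref{dir:int_h} under $D2.1$ and $\int_U|\pt_t|^{2\theta_2}\,dx\leq C$ by Lemma~\ref{thrm:pt-ah-bound-dir} under $D1$.

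Setting $y(t)=\int_U\pt_t^2\,dx$, this gives $y'\leq -Cf(y)+C\,D(2,t)+C\e$ with $f=\varphi^{-1}$ and $f(0)=0$; Lemma~\ref{thrm:pt-ah-bound-dir} guarantees $0\leq y(t)\leq C$, so Lemma~\ref{lem:A1-gronw} yields $\limsup_{t\to\infty}y(t)\leq\varphi\big(\limsup_{t\to\infty}(D(2,t)+\e)\big)=\varphi(\e)$, and letting $\e\to0$ forces $y(t)\to0$. Combined with the two reductions above, this proves \eqref{est:pta2}. I expect the one delicate point to be verifying that the forcing in the energy inequality genuinely reduces to $D(2,t)$ rather than leaving an uncontrolled $\Psi_t$ term; this is where the homogeneous Dirichlet trace of IBVP-II, as opposed to the split trace of IBVP-I, is essential.
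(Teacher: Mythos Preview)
Your proposal is correct and follows essentially the same route as the paper: reduce $\Delta_Q(t)$ to $\int_U|p_t+A|^2\,dx$ via the integrated equation, split $p_t+A=\pt_t+(\Psi_t+A)$ and use $D2$ to discard the second piece, then run the energy inequality for $\int_U\pt_t^2\,dx$ and close with the Weighted Poincar\'e inequality \eqref{poincare-w-0}, the weight $M_P$ being bounded by \eqref{dir:int_h} and Lemma~\ref{thrm:pt-ah-bound-dir}. The only cosmetic differences are that the paper writes the forcing as $\widetilde{D}(2,t)=\|\Psi_{tt}\|_{L^2}^{4/(2+a)}+\|\nabla\Psi_t\|_{L^2}^2$ and carries the $\e$-term as $\e\big(\int_U\pt_t^2\big)^{2/(2-a)}$, choosing $\theta=\tfrac{2}{2-a}$ in \eqref{poincare-w-0} so the two powers match and cancel directly, rather than invoking Lemma~\ref{lem:A1-gronw} with an abstract $\varphi$; both variants are equivalent under $D2$.
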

\begin{proof}
First, notice that
\begin{equation*}
 \frac{d}{dt}\int_Up(x,t)\,dx=Q(t) \quad\text{and}\quad  \frac{d}{dt}\int_Up_s(x,t)\,dx=Q_s.
\end{equation*}
Thus 
 \begin{align}\label{delta-q2}
 |\Delta_Q(t)|^2&= |Q(t)-Q_s|^2=\left|\int_Up_t\,dx-Q_s\right|^2=\left|\int_U(p_t+A)dx\right|^2\notag\\
&\leq C\int_U|p_t+A|^2\,dx\leq C\int_U|\pt_t|^2\,dx +C\int_U|\Psi_t+A|^2\,dx.
 \end{align}
Assuming $\int_U|\Psi_t+A|^2\,dx\to 0$ as $t\to\infty$
it is sufficient to prove that $\int_U|\pt_t|^2\,dx\to0$ as $t\to\infty$.

Differntiating both sides of \eqref{eq:dir-trans-shift} in $t$, multiplying by $\pt_t$ and integrating over $U$ we have due to zero boundary condition 
\begin{align}\label{pbart-a-1}
  \frac12\frac{d}{dt}\int_U\pt_t^2\,dx=-\int_U(K(|\nabla p|)\nabla p)_t\nabla\pt_t\,dx
-\int_U\Psi_{tt}\,\pt_t\,dx.
\end{align}
Similar to estimate (5.10) in \cite{ABI12} we get since $\nabla\pt=\nabla p-\nabla\Psi$
\begin{align}\label{pbart-a-2}
\frac{d}{dt}\int_U\pt_t^2\,dx\leq-C\int_UK(|\nabla p|)|\nabla\pt_t|^2\,dx
+\e\left(\int_U|\pt_t|^2\,dx\right)^{\frac2{2-a}}+C\widetilde{D}(2,t).
\end{align}
where
\begin{equation}
 \widetilde{D}(2,t)=\|\Psi_{tt}\|_{L^{2}}^{\frac4{2+a}}+\|\nabla\Psi_t\|_{L^{2}}^2.
\end{equation}

\noindent Applying the Weighted Poincar\'{e} inequality \eqref{poincare-w-0} with $u=\pt_t$, $\pt_t|_{\Gamma_i}=0$ and $\xi=\n p$ with powers $\ah=2$ and $\theta=\frac2{2-a}$ to the first integral on the RHS of \eqref{pbart-a-2} we get: 
\begin{align*}
\frac{d}{dt}\int_U\pt_t^2\,dx\leq-CM_P^{-\theta}\left(\int_U|\pt_t|^2\,dx\right)^\frac2{2-a}
+\e\left(\int_U|\pt_t|^2\,dx\right)^\frac2{2-a}+C\widetilde{D}(2,t).
\end{align*}
Here
\begin{equation}
 M_P(|\nabla p|,|\pt_t|)=\left[1+\int_U|\nabla p|^{2-a}+|\pt_t|^{2\theta_2}\,dx\right]^{\frac{2-\theta_1}{\theta\theta_1}}\leq C
\end{equation}
by virtue of \eqref{dir:int_h} under assumptions D2.1 and Lemma~\ref{thrm:pt-ah-bound-dir} under assumptions D1.

Choosing sufficiently small $\e$ we get 
\begin{equation}\label{pta5}
 \limsup\limits_{t\to\infty}\int_U|\pt_t|^2\,dx\leq \limsup\limits_{t\to\infty} \widetilde{D}(2,t).
\end{equation}
Under assumptions D2.2 the RHS of \eqref{pta5} converges to 0 as $t\to\infty$ and the result of Lemma follows from \eqref{delta-q2}.
\end{proof}



Further we will use another result from \cite{HI12anydegree}:
under assumptions D3.1 \eqref{assumpD3-1} (see \cite{HI12anydegree}, Th. 4.3)
\begin{equation}
 \int_U|\pt(x,t)|^2+|\pt_s(x,t)|^2\,dx\leq C.\label{dir:pbar-psbar}
\end{equation}

\begin{theorem}\label{th:dir:p-ps}
Suppose the boundary data $\Psi(x,t)$ and $\Phi(x)$ satisfies assumptions $D1 - D3$. Then
\begin{equation}
 \|\nabla(p-p_s)\|^{2}_{L^{2-a}(U)}\to 0 \quad\text{as}\quad t\to\infty.
\end{equation}

\end{theorem}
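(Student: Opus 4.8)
The plan is to derive the result from the monotonicity of the Forchheimer operator, exactly as in the flux problem, but now exploiting the fact that in the Dirichlet setting the shifted solutions $\pt$ and $\pt_s$ vanish on the \emph{whole} boundary $\Gamma$. First I would set $z=\pt-\pt_s$, which by \eqref{eq:dir-trans-shift}--\eqref{eq:dir-pss-shift} satisfies $z|_{\Gamma}=0$, and record the identity $\nabla(p-p_s)=\nabla z+\nabla(\Delta_\Psi)$, where $\Delta_\Psi=\Psi-\Phi$. Since $p_s=-At+W$ gives $\nabla p_s=\nabla W$, the monotonicity inequality \eqref{ineq:monoton} applied with $u_1=p$, $u_2=p_s$ yields
\begin{equation*}
\|\nabla(p-p_s)\|_{L^{2-a}(U)}^2\leq C_\Phi\int_U\bigl(K(|\n p|)\n p-K(|\n W|)\n W\bigr)\cdot\nabla(p-p_s)\,dx,
\end{equation*}
and the constant $C_\Phi=C_\Phi(p,W)$ from \eqref{def:c-phi} is uniformly bounded in $t$ by virtue of \eqref{dir:int_h} (valid under D2.1) and Lemma~4.1 of \cite{ABI12}.

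Next I would split the right-hand side according to $\nabla(p-p_s)=\nabla z+\nabla(\Delta_\Psi)$. For the $\nabla z$ part I integrate by parts; because $z$ vanishes on all of $\Gamma$ there is no boundary contribution, and using $\nabla\cdot(K(|\n p|)\n p)=p_t$ together with $\nabla\cdot(K(|\n W|)\n W)=-A$ the divergence collapses to $p_t+A$, giving
\begin{equation*}
\int_U\bigl(K(|\n p|)\n p-K(|\n W|)\n W\bigr)\cdot\nabla z\,dx=-\int_U(p_t+A)\,z\,dx.
\end{equation*}
This term is controlled by Cauchy--Schwarz: $\|z\|_{L^2(U)}$ is bounded by \eqref{dir:pbar-psbar} (under D3.1), while $\|p_t+A\|_{L^2(U)}\to0$ by Lemma~\ref{lem:deltaq-0} (under D1, D2), so the whole term tends to $0$.

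For the $\nabla(\Delta_\Psi)$ part I would \emph{not} integrate by parts; instead I bound the Forchheimer flux pointwise using \eqref{k-ineq}, namely $|K(|\n p|)\n p|\leq C(1+|\n p|^{1-a})$ and likewise for $W$, and apply H\"older's inequality with exponents $\tfrac{2-a}{1-a}$ and $2-a$:
\begin{equation*}
\left|\int_U\bigl(K(|\n p|)\n p-K(|\n W|)\n W\bigr)\cdot\nabla(\Delta_\Psi)\,dx\right|\leq C\Bigl(1+\|\n p\|_{L^{2-a}}^{1-a}+\|\n W\|_{L^{2-a}}^{1-a}\Bigr)\|\nabla(\Delta_\Psi)\|_{L^{2-a}(U)}.
\end{equation*}
The gradient norms are bounded as above, and since $U$ is bounded $\|\nabla(\Delta_\Psi)\|_{L^{2-a}(U)}\leq C\|\nabla(\Delta_\Psi)\|_{L^{2}(U)}\to0$ by \eqref{assumpD3-2}. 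Adding the two contributions shows the right-hand side of the monotonicity estimate vanishes as $t\to\infty$, which is the claim.

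The main obstacle is the bookkeeping that lets the two a priori bounds --- boundedness of $\int_U|\n p|^{2-a}\,dx$ and smallness of $\int_U|p_t+A|^2\,dx$ --- be invoked simultaneously, since both rest on the full hierarchy of assumptions D1--D3; the decomposition $\nabla(p-p_s)=\nabla z+\nabla(\Delta_\Psi)$ is what makes this work, because it isolates a zero-trace piece (so no uncontrolled boundary flux on $\Gamma_i$ appears after integration by parts) from a pure boundary-deviation piece that is handled directly by the Dirichlet hypothesis \eqref{assumpD3-2}. Verifying that $C_\Phi$ stays bounded \emph{uniformly} in time, rather than merely finite at each instant, is the one place I would be most careful.
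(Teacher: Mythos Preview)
Your proposal is correct and follows essentially the same route as the paper: apply the monotonicity inequality \eqref{ineq:monoton}, split $\nabla(p-p_s)=\nabla(\pt-\pt_s)+\nabla(\Delta_\Psi)$, integrate the first piece by parts (using $(\pt-\pt_s)|_{\Gamma_i}=0$ and the Neumann condition on $\Gamma_e$) to reduce it to $-\int_U(p_t+A)(\pt-\pt_s)\,dx$, and control the second piece directly. The only cosmetic difference is in the $\nabla(\Delta_\Psi)$ term: the paper uses the weighted Cauchy--Schwarz inequality $\int_U K(|\n p|)\n p\cdot\nabla(\Delta_\Psi)\,dx\leq\bigl(\int_U K(|\n p|)|\n p|^2\bigr)^{1/2}\bigl(\int_U K(|\n p|)|\nabla(\Delta_\Psi)|^2\bigr)^{1/2}$ together with \eqref{h-k-ineq} and the boundedness of $K$, landing directly on $C\|\nabla(\Delta_\Psi)\|_{L^2}$, whereas you use the pointwise bound $|K(\xi)\xi|\leq C(1+\xi^{1-a})$ and H\"older to reach $\|\nabla(\Delta_\Psi)\|_{L^{2-a}}$ before invoking $L^2\hookrightarrow L^{2-a}$; both arguments are valid and yield the same conclusion under \eqref{assumpD3-2}.
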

\begin{proof}
From \eqref{ineq:monoton} we get
\begin{equation}\label{phi-2-a}
\left(\int_{U}|\nabla(p-p_s)|^{2-a}dx\right)^{\frac2{2-a}}\leq 
C_\Phi\int_U(K(|\nabla p|)\nabla p - K(|\nabla p_s|)\nabla p_s)\cdot \nabla(p-p_s)dx,
\end{equation}
where $C_\Phi=C_\Phi(p,p_s)$ is as in \eqref{def:c-phi}. Since $\nabla(p-p_s)=\n(\pt-W)+\nabla(\Delta_\Psi)$, $C_\Phi$ is bounded by virtue of \eqref{dir:int_h} and \eqref{ineq:nps-bound} under condition $\|\nabla(\Delta_\Psi)\|_{L^{2-a}(U)}\leq C$.  

Since $\pt-\pt_s|_{\Gamma_i}=0$, integration by parts in the  RHS of \eqref{phi-2-a} gives
\begin{align*}
\|\nabla(p-p_s)\|^{2}_{L^{2-a}}\leq&-C\int_U(p_t-p_{s,t})(\pt-\pt_s)\,dx\\
&+C\int_U(K(|\nabla p|)\nabla p - K(|\nabla p_s|)\nabla p_s)\cdot \nabla(\Delta_\Psi)dx.\notag
\end{align*}

\noindent Since $K(\cdot)$ is bounded we have
\begin{align}
 \int_U&(K(|\nabla p|)\nabla p - K(|\nabla p_s|)\nabla p_s)\nabla(\Delta_\Psi)dx
\\
&\leq\left(\int_UK(|\nabla p|)|\nabla p|^2\,dx\right)^{\frac12}\left(\int_UK(|\nabla p|)|\nabla(\Delta_\Psi)|^2\,dx\right)^{\frac12}\notag\\
&+
\left(\int_UK(|\nabla W|)|\nabla W|^2\,dx\right)^{\frac12}\left(\int_UK(|\nabla W|)|\nabla(\Delta_\Psi)|^2\,dx\right)^{\frac12}\notag\\
&\leq 
\left[\left(\int_UH(|\n p|)\,dx\right)^{\frac12}+
\left(\int_U|\n W|^{2-a}\,dx\right)^{\frac12}\right]\|\nabla(\Delta_\Psi)\|_{L^{2}(U)}\notag\\
&\leq C \|\nabla(\Delta_\Psi)\|_{L^{2}(U)}.\notag
\end{align}
The last inequality holds in view of \eqref{dir:int_h} and \eqref{ineq:nps-bound}.

\noindent Thus 
\begin{align*}
\|\nabla(p-p_s)\|^{2}_{L^{2-a}}\leq&-C\int_U(p_t+A)(\pt-\pt_s)\,dx+C \|\nabla(\Delta_\Psi)\|_{L^{2}(U)}.
\end{align*}

\noindent The result follows in view of Lemma~\ref{lem:deltaq-0} and estimate \eqref{dir:pbar-psbar} under the assumptions D1 - D3.

\end{proof}

Lemma~\ref{lem:deltaq-0} and Theorem~\ref{th:dir:p-ps} combined with Lemma~\ref{lem:pi-diff} prove the main result 
for the solution of IBVP-II:
\begin{theorem}\label{th:pi-pipss-dir}
Suppose the boundary data satisfies assumptions $D1 - D3$. Then
\begin{equation*}
 J_g(t)-J_{g,PSS}\to0\quad\text{as}\quad t\to\infty.
\end{equation*}
\end{theorem}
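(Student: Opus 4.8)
The plan is to observe that this theorem requires no new estimate: it is the synthesis of the reduction Lemma~\ref{lem:pi-diff} with the two convergence results already established in this section. First I would recall that Lemma~\ref{lem:pi-diff} reduces the target $J_g(t)-J_{g,PSS}\to0$ to the two independent limits $\Delta_Q(t)\to0$ and $\|\nabla(p-p_s)\|_{L^{2-a}(U)}\to0$ as $t\to\infty$. Consequently the whole task is to verify that both of these hypotheses are forced by the standing assumptions $D1$--$D3$.

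For the flux convergence I would invoke Lemma~\ref{lem:deltaq-0}, which under $D1$ and $D2$ gives $\limsup_{t\to\infty}|\Delta_Q(t)|=0$ (in fact jointly with the $L^2$ decay of $p_t+A$); this is precisely the first hypothesis. For the gradient convergence I would invoke Theorem~\ref{th:dir:p-ps}, which under the full set $D1$--$D3$ gives $\|\nabla(p-p_s)\|^2_{L^{2-a}(U)}\to0$, hence the second hypothesis. Since $D1$--$D3$ simultaneously entail the hypotheses of both supporting results, a single application of Lemma~\ref{lem:pi-diff} closes the proof.

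The final combination is thus a one-line argument, and I expect the genuine difficulty to sit entirely inside the two inputs rather than in their assembly. The most delicate input is Theorem~\ref{th:dir:p-ps}: there the monotonicity inequality \eqref{ineq:monoton} bounds $\|\nabla(p-p_s)\|^2_{L^{2-a}}$ by the cross term $\int_U(p_t+A)(\pt-\pt_s)\,dx$ plus a $\|\nabla(\Delta_\Psi)\|_{L^2}$ remainder, and making this useful demands that the structural constant $C_\Phi(p,p_s)$ from \eqref{def:c-phi} stay uniformly bounded in time. That uniform bound rests on the gradient estimate $\int_U|\nabla p|^{2-a}\,dx\le C$ coming from \eqref{dir:int_h} (Theorem~\ref{thrm:bdd-h-gradp}) and on the $L^2$ decay of $p_t+A$ from Lemma~\ref{lem:deltaq-0}, which themselves feed on the uniform $L^\beta$ bounds for $\pt_t$ from Lemma~\ref{thrm:pt-ah-bound-dir} through the weighted Poincar\'{e} inequality. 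The real care, therefore, is in tracking which of $D1$, $D2$, $D3$ powers each estimate, even though the headline statement follows immediately once those are in hand.
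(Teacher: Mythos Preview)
Your proposal is correct and matches the paper's approach exactly: the paper states, without further argument, that Lemma~\ref{lem:deltaq-0} and Theorem~\ref{th:dir:p-ps} combined with Lemma~\ref{lem:pi-diff} prove the result. One minor slip in your side commentary: the uniform gradient bound \eqref{dir:int_h} in the Dirichlet setting comes from Theorem~4.5 of \cite{HI12anydegree}, not from Theorem~\ref{thrm:bdd-h-gradp} of this paper (which handles IBVP-I).
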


\section{Productivity index concept for the flow of ideal gas.}\label{sec:Gas}
In this section we discuss the concept of the Diffusive Capacity 
for an ideal gas flow in the porous media 
for a well-reservoir system.  We will repeat some discussion from Sec.~\ref{sec:problem-state} and provide more details. 
The equation of state for an ideal gas takes the form  (see \cite{Aronson}, \cite{Muskat})
\begin{equation}\label{eq:state-gas}
 \rho(p)=Mp,
\end{equation}
where $\rho$ is the density of the fluid and $M$ is a proportionality constant. 
Without loss of generality we assume $M=1$.
As a momentum equation we consider the second order Forchheimer equation \eqref{eq:g-forch} 
which takes the form (see, for example,  \cite{PayneStraughan-cont-conv})                                      
\begin{align}\label{eq:2forch-1}
 \alpha u + \beta\rho u|u|=-\nabla p,\quad \beta=F\Phi k^{-1/2},
\end{align}
where $F$ is the Forchheimer coefficient, 
$\Phi$ is porosity and $k$ is permeability of the porous media.

As in case of slightly compressible fluid, the above equation can be solved for $u$
and rewritten in terms of a nonlinear permeability function $K_2(|\n p|)$:
\begin{equation}\label{eq:2forch}
 u=-K_2(p|\n p|)\n p=-\frac{2}{\alpha+\sqrt{\alpha^2+4\beta\rho|\nabla p|}}\nabla p.
\end{equation}
Combining \eqref{eq:state-gas} and \eqref{eq:2forch} together with the continuity equation \eqref{eq:continuity} yields
the parabolic equation for pressure
\begin{equation}\label{gas-equation}
 \frac{\partial p}{\partial t}=L[p]\equiv\nabla\cdot( K_2(p|\n p|)p\n p)=
 \nabla\cdot\left(\frac{2 p }{\alpha+\sqrt{\alpha^2+4\beta p|\nabla p|}}\nabla p\right).
\end{equation}
As before the domain $U$ models the reservoir with boundary split in two parts
$\Gamma_i$ and $\Gamma_e$. $\Gamma_i$ is the well-boundary 
while $\Gamma_e$ is an exterior non-permeable boundary of the reservoir.\\
We impose  Dirichlet Pseudo Steady State (PSS)  
boundary conditions on $\Gamma_i$
\begin{equation}
 p|_{\Gamma_i}=B-At\label{bc-gas:dirichlet}
\end{equation}
and zero mass flux boundary conditions  on $\Gamma_e$
\begin{equation}
 \rho u\cdot N|_{\Gamma_e}=0 \label{bc-gas:neum}
\end{equation}
and the initial condition
\begin{equation}
 p(x,0)=\sqrt{B^2+\phi_0(x)}\label{gas-init-cond}
\end{equation}
 for $\phi_0(x)\ge0$.
\begin{remark}
 Constant $B$ is a  
positive parameter (generally large) characterizing the initial reserves of gas in the reservoir domain $U$.
The constant $A$ is associated with the amount of gas extracted at the well-bore $\Gamma_i$.
Thus the quantity $B-At$ quantifies the gas reserves in the reservoir at the moment $t$.
\end{remark}


For gas filtration in porous media, engineers define the Productivity Index as, see \cite{ aziz-gas-transient, PI-gas}:
\begin{definition}\label{PI-gas-def}
Let $p(x,t)$ be a classical solution of equation \ref{gas-equation} 
satisfying boundary conditions \ref{bc-gas:dirichlet} and \ref{bc-gas:neum}. 
Let $Q(t)$ be the the total mass flow through the well-bore boundary $\Gamma_i$
\begin{equation}\label{gas-total-flux}
Q(t)=\int_{\Gamma_i} \rho u \cdot N \, ds = \int_{\Gamma_i} K_2(p|\n p|) p \nabla p\cdot N\,ds,
\end{equation}
where $N$ is an outward normal to $\Gamma_i$. 
Then productivity index for the well-reservoir system is defined by 
\begin{equation}\label{def:pi-gas}
 J=\cfrac{Q(t)}{\frac1{|U|}\int_U p^2\,dx-\frac1{|\Gamma_i|}\int_{\Gamma_i} p^2\,ds}.
\end{equation}
\end{definition}

As we saw earlier, in case of slightly compressible flow under the boundary conditions \eqref{bc-gas:dirichlet} and \eqref{bc-gas:neum} the value of the PI, defined as in \eqref{def:pi} is stabilizing to a constant value \eqref{def:pi-pss}, which is defined by the PSS solution and depends only on the paramerter $A$. Obvioulsy, in case of the gas flow descirbed by \eqref{gas-equation} this feature is not valid anymore and the PI is not stabilizing in time.
Nevertheless the approach developed in previous sections is applicable to the gas flow as well. Namely we will introduce special auxiliary pressure function characterized by the time independent PI. We  will then study the relation between the general  time dependent PI and this time independent one.  

Let $Q_0=\frac{A}{|U|}$ be a given constant mass-rate of gas production
and consider the auxiliary pressure distribution given by
\begin{equation}\label{p_0def}
 p_0(x,t)=\sqrt{(B-At)^2+2W(x)},
\end{equation}
where $W(x)$ is the solution of the following BVP 
\begin{align}
  &-\nabla \cdot \left( K_2(|\nabla W|)\nabla W \right)= 
  -\nabla\cdot\left(\frac{2\nabla W }{\alpha+\sqrt{\alpha^2+4\beta |\nabla W|}}\right)=A, \label{eq:gas-W}\\
    &W = 0, \quad \text{on}\quad\Gamma_i, \label{eq:gas-Wb} \\
    &\nabla W\cdot N=0, \quad \text{on}\quad\Gamma_e. \label{eq:gas-Wc}
\end{align}
Integrating Eq. \eqref{eq:gas-W} over the domain $U$, 
using integration by parts and boundary condition 
\eqref{eq:gas-Wc} yields to the integral flux condition 
\begin{equation}
\int_{\Gamma_i}  -K_2(|\nabla W|) \nabla W \cdot N \, ds = |U| A = Q_0. \label{fluxW}  
\end{equation}
From \eqref{p_0def} we can also derive that
\begin{equation} p_0 \nabla p_0= \nabla W. \label{gradW} \end{equation}
Combining this property, definition \eqref{p_0def} and BVP  \eqref{eq:gas-W}-\eqref{eq:gas-Wc}
it can be easily verified that the auxiliary pressure $p_0$ satisfies the following equations
\begin{align}
&\frac{\partial p_0}{\partial t}-L[p_0]=f_0(x,t)\quad\text{in $U$,} \label{gas:eq:p0}\\
&p_0(x,t) = B -At\quad\text{on $\Gamma_i$,} \\
&p_0 \nabla p_0 \cdot N =0 \quad\text{on $\Gamma_e$.}\label{gas:eq:p0c}
\end{align}
where 
\begin{align}\label{gas:def-f0}
f_0(x,t)&= \frac{ \partial p_0}{\partial t} - \nabla \cdot \left( K_2( p_0 |\nabla p_0|) p_0\nabla p_0\right) \\
& = \frac{- A( B-At ) }{\sqrt{ (B-At)^2+2W } } -  \nabla \cdot \left( K_2( |\nabla W|) \nabla W \right)\notag \\
& = \frac{- A( B-At ) }{\sqrt{ (B-At)^2+2W } } + A\notag \\
& = A \left(1-\frac{ B-At  }{\sqrt{ (B-At)^2+2W } } \right)>0.\notag
\end{align}
From \eqref{fluxW} and \eqref{gradW}, it also follows the total mass flux condition for $p_0$
\begin{equation}
\int_{\Gamma_i}  -K_2(p_0|\nabla p_0|) p_0\nabla p_0 \cdot N \, ds = \int_{\Gamma_i}  -K_2(|\nabla W|) \nabla W \cdot N \, ds = Q_0. \label{fluxp0}  
\end{equation}

Using this last result and the explicit representation of $p_0(x,t)$ 
in formula \eqref{def:pi-gas} leads to the following Proposition.
\begin{proposition}
The Productivity Index for gas flow defined on  $p_0(x,t)$ is time independent 
and is given by
\begin{equation} \label{jp0}
 J[p_0]=\frac{Q_0}{\frac1{|U|}\int_U2W(x)\,dx}.
\end{equation}
\end{proposition}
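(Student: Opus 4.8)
The plan is to evaluate the two ingredients of the quotient $J[p_0]$ in \eqref{def:pi-gas} separately and to observe that each is independent of $t$. For the numerator, the total mass flux appearing there is exactly the quantity already computed in \eqref{fluxp0}: since $p_0\nabla p_0=\nabla W$ by \eqref{gradW}, the flux of $K_2(p_0|\nabla p_0|)p_0\nabla p_0$ across $\Gamma_i$ coincides with the flux of $K_2(|\nabla W|)\nabla W$, which by the integral identity \eqref{fluxW} equals the constant $Q_0$. Hence $Q(t)\equiv Q_0$ and the numerator carries no time dependence; the only care required is to match the sign convention of the flux integral in Definition~\ref{PI-gas-def} with that of \eqref{fluxp0}.

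The substantive step is the denominator, namely the $p^2$-pressure drawdown. I would substitute the explicit representation $p_0^2(x,t)=(B-At)^2+2W(x)$ from \eqref{p_0def} directly into the two averages. For the domain average this gives
\begin{equation*}
\frac1{|U|}\int_U p_0^2\,dx=(B-At)^2+\frac1{|U|}\int_U 2W(x)\,dx,
\end{equation*}
because $(B-At)^2$ is constant in $x$. For the well-boundary average I would invoke the homogeneous condition $W|_{\Gamma_i}=0$ from \eqref{eq:gas-Wb}, so that $p_0^2|_{\Gamma_i}=(B-At)^2$ and therefore
\begin{equation*}
\frac1{|\Gamma_i|}\int_{\Gamma_i} p_0^2\,ds=(B-At)^2.
\end{equation*}

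Subtracting the two, the time-dependent terms $(B-At)^2$ cancel exactly, leaving the drawdown equal to $\frac1{|U|}\int_U 2W\,dx$, which is manifestly time-independent. Dividing the constant numerator $Q_0$ by this constant drawdown yields \eqref{jp0}. There is no genuine analytic obstacle: the whole result rests on the algebraic cancellation made possible by the condition $W|_{\Gamma_i}=0$, which is precisely why $W$ was normalized to vanish on $\Gamma_i$ in the auxiliary BVP \eqref{eq:gas-W}--\eqref{eq:gas-Wc}. The only point deserving a remark is the well-definedness of $J[p_0]$, i.e.\ that the drawdown $\frac1{|U|}\int_U 2W\,dx$ is nonzero; this parallels the nonvanishing-denominator requirement already imposed in the slightly compressible case, and holds since $W\not\equiv0$ (indeed $W\ge0$ in $U$, as follows from $-\n\cdot(K_2(|\n W|)\n W)=A>0$ together with the boundary data in \eqref{eq:gas-Wb}--\eqref{eq:gas-Wc}).
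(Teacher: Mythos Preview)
Your proof is correct and follows exactly the approach the paper indicates: the numerator is handled by the flux identity \eqref{fluxp0}, and the denominator by substituting the explicit form $p_0^2=(B-At)^2+2W$ together with $W|_{\Gamma_i}=0$, so that the $(B-At)^2$ terms cancel. The paper states the proposition without a written-out proof, merely pointing to \eqref{fluxp0} and the explicit representation of $p_0$; your argument is a faithful unpacking of this, with the added (and welcome) remark on the nonvanishing of the drawdown.
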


Numerical computations, performed for several basic reservoir geometries, 
show that if the initial data in the system \eqref{eq:2forch}-\eqref{bc-gas:neum}
is given by $p(x,0)=p_0(x,0)$, then the corresponding 
productivity indices $J[p_0]$ and $J[p](t)$ are almost identical 
for a long time, see Fig. \ref{fig:PIgas}, as long as the quantity 
\begin{equation}\label{time-constr-1}
 \frac{ B-At  }{\sqrt{ (B-At)^2+2W }}\sim 1,
\end{equation}
or, equivalently, as long as
\begin{equation}\label{time-constr-2}
 (B-At)^2\gg 2\|W\|_{\infty}\quad \Leftrightarrow \quad t \leq T_{crit}< \dfrac{B -  \sqrt{2 \| W\|_\infty }}{A}.
\end{equation}

\begin{figure}[!t]
 \includegraphics [scale = 0.8]{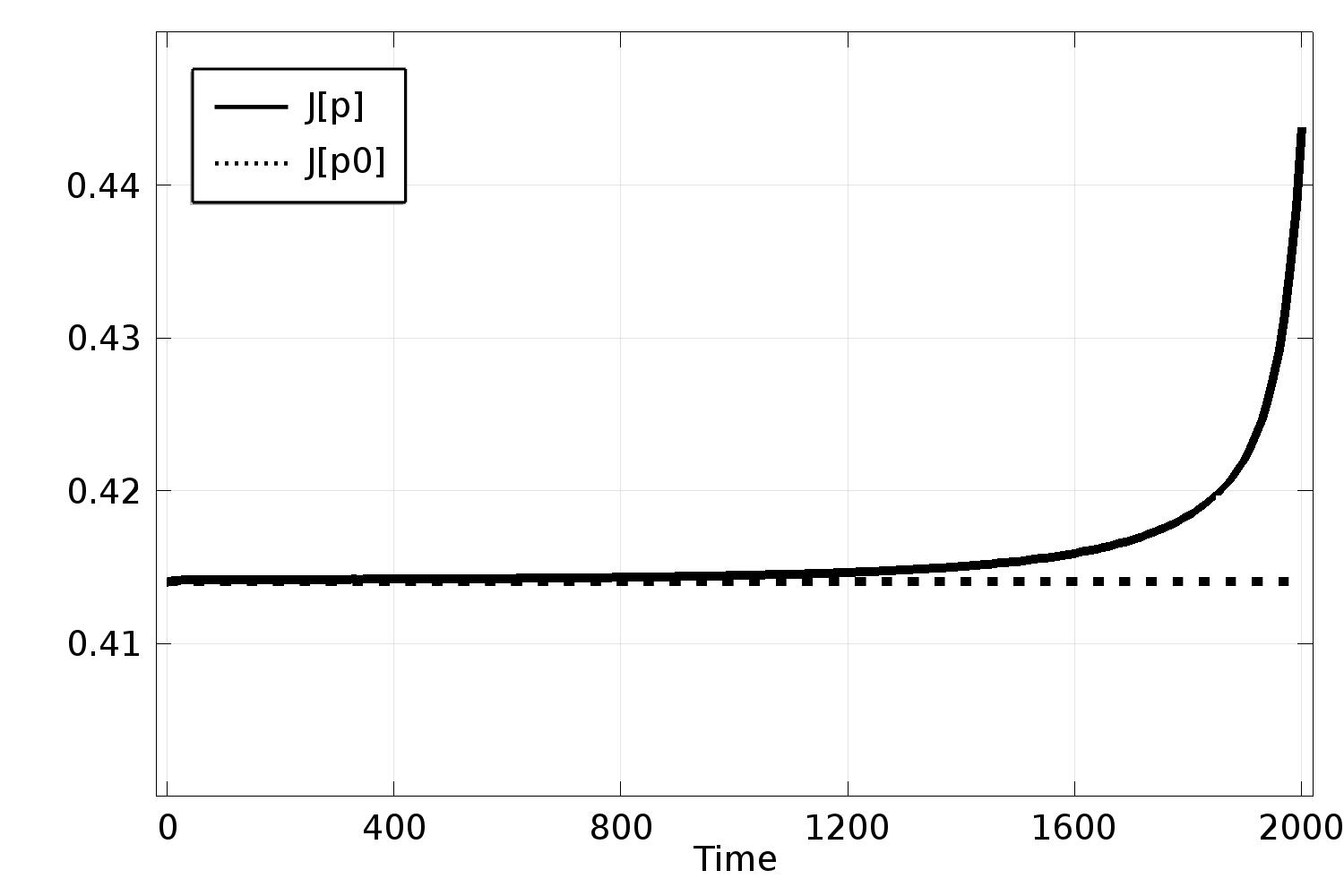}
 \caption{Comparison between the time dependent productivity index $J[p]$ and the PSS productivity index $J[p_0]$
 as the time $t$ approaches the critical value $T_{crit} = \frac{B}{A}=2000$. }
 \label{fig:PIgas}
\end{figure}

In   case of compressible flow, we have to fix a critical time 
$$ T_{crit} = \frac{B}{A}, $$ 
 and for $t>T_{crit}$ the negative boundary data \eqref{bc-gas:dirichlet} leads 
to the violation of ellipticity. 
This behavior is qualitatively justified by the fact that as long as \eqref{time-constr-1} holds, function $f_0(x,t)$ in
the RHS of \eqref{gas:eq:p0} is negligible and $p(x,t)$ and $p_0(x,t)$ behave
similarly. 
On the other hand, as $(B-At)\rightarrow 0$ then $f_0(x,t)$
approaches the constant value $A$, and the two solutions diverge from each other.
Then two productivity indices  $J[p_0]$ and $J[p](t)$ also diverge from each other.

This phenomenon is observed on the actual field data and has a clear practical explanation.
Notice that the denominator in the formula for $J[p_0]$ \eqref{jp0} 
$$ \frac1{|U|} \int_U 2W(x)\,dx = \frac{2 \|W\|_{1}}{| U | }$$
(since $W\ge0$ for any $x\in U$) 
is a measure of the pressure drawdown needed to maintain constant production $Q_0$. 
As long as the gas reserves are considerably larger than the the pressure drawdown  
\begin{equation}
 (B-At)^2 \ge C > 2 \|W\|_\infty >\frac{2 \|W\|_{1}}{| U | },
\end{equation}
then the distributed source term $f_0$ (equivalent to reservoir fluid injection) 
needed to maintain constant production $Q_0$ is negligible. 
Otherwise when the gas reserves are comparable in magnitude
with the pressure drawdown, then a possible way to maintain 
constant production rate is by resupplying the reservoir 
by fluid injections.

In the remaining part of this section we will theoretically investigate the difference between the functions  $p(x,t)$ (the actual solution of the problem) and $p_0(x,t)$  (the solution of the auxiliary problem) depending on the key parameter $T_{crit}=B/A$. 
Unfortunately for the Forchheimer case we were not yet able to obtain the appropriate estimates for the differences between $p_0(x,t)$ and $p(x,t)$. We will report mathematically rigorous result only  for the case of compressible Darcy flow. 
We will show that for the fixed $T_0$ for all times $t\in [0,T_{0}]$ when $p(x,t)>0$ on $\overline{U\times[0,T_{0}]}$ the productivity indices  $J[p]$ and $J[p_0]$   are becoming closer to each other with the increasing parameter $B$. The obtained results  make us believe this comparison can be proved in the general Forchheimer case as well.


We consider the case of positive solutions. For that assume that our time space domain belongs to the time layer $0\le t\le T_0<T_{crit}$. Let $D=U\times(0,T_0]$.   
The following useful inequality follows directly from maximum principle

\begin{lemma}
Let $p(x,t)>0$ for all $0\le t\le T_0<T_{crit}$ is  a classical solution of the equation \eqref{gas-equation} with boundary conditions \eqref{bc-gas:dirichlet} and \eqref{bc-gas:neum}. Then for any $0\le t\le T_0$ 
\begin{equation}\label{p>B-At}p(x,t)\ge \min\{p(x,0); B-At\}.\end{equation}
\end{lemma}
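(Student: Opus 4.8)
The plan is to establish the bound by a comparison (maximum principle) argument, using the spatially constant function $\underline p(x,t)=B-At$ as a subsolution. The first, purely bookkeeping, step is to note that the initial condition \eqref{gas-init-cond} gives $p(x,0)=\sqrt{B^2+\phi_0(x)}\ge B\ge B-At$ for every $t\ge0$, so $\min\{p(x,0);B-At\}=B-At$ and the claim reduces to proving $p(x,t)\ge B-At$ on $\overline D$, where $D=U\times(0,T_0]$.

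The structural point, and the place where the hypothesis $p>0$ on $\overline{U\times[0,T_0]}$ is used, is that the flux coefficient
\[
\mathbf A(x,t):=K_2(p|\nabla p|)\,p=\frac{2p}{\alpha+\sqrt{\alpha^2+4\beta p|\nabla p|}}
\]
is nonnegative (indeed strictly positive) on $\overline D$: since $p$ is a classical, hence continuous, positive solution on the compact set $\overline{U\times[0,T_0]}$, $p$ is bounded below by a positive constant and $p|\nabla p|$ is bounded above, so $\mathbf A$ stays away from $0$ and $\infty$ and equation \eqref{gas-equation} retains its parabolic (elliptic) structure. This is precisely the ellipticity that is lost once $p$ is allowed to vanish. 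I would then set $w=p-\underline p$ and subtract the two equations: because $\nabla\underline p\equiv0$ we have $L[\underline p]=0$ and $\partial_t\underline p=-A$, while $\nabla w=\nabla p$, so (freezing $\mathbf A$ along the fixed solution $p$) $w$ solves the linear parabolic equation
\[
w_t-\nabla\cdot(\mathbf A\,\nabla w)=A\ge0\qquad\text{in }D.
\]

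Next I would transfer the boundary and initial data, which go through cleanly: $w=0$ on $\Gamma_i$ by matching the Dirichlet trace \eqref{bc-gas:dirichlet}, $w(x,0)=\sqrt{B^2+\phi_0(x)}-B\ge0$ at $t=0$, and on $\Gamma_e$ the conormal flux $\mathbf A\nabla w\cdot N=(K_2(p|\nabla p|)p\nabla p)\cdot N$ vanishes by the zero mass-flux condition \eqref{bc-gas:neum}. To conclude $w\ge0$ I would avoid invoking a Hopf lemma on the Neumann part and instead test the equation with the negative part $v=\min(w,0)\le0$: integrating over $U$, the boundary term drops (it is zero on $\Gamma_i$ since $v=0$ there, and zero on $\Gamma_e$ since the conormal flux vanishes), yielding the energy identity
\[
\frac{d}{dt}\,\frac12\int_U v^2\,dx+\int_{\{w<0\}}\mathbf A\,|\nabla w|^2\,dx=A\int_U v\,dx\le0 .
\]
Hence $\int_U v^2\,dx$ is nonincreasing and vanishes at $t=0$ (because $w(\cdot,0)\ge0$), so $v\equiv0$, i.e. $w\ge0$ and $p\ge B-At$, as required.

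I expect the only delicate point to be the parabolic-structure step: one must use the compactness of $\overline{U\times[0,T_0]}$ together with the strict positivity of $p$ to guarantee the sign $\mathbf A\ge0$ that makes the dissipation term $\int_{\{w<0\}}\mathbf A|\nabla w|^2$ nonnegative, and one must be careful that the conormal (zero mass-flux) condition on $\Gamma_e$ is exactly what forces the boundary integral in the energy identity to vanish rather than contribute an uncontrolled term. Everything else is the standard minimum-principle machinery for a uniformly parabolic operator with mixed Dirichlet--Neumann data.
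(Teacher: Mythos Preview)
Your argument is correct and is essentially the same approach as the paper's: both rely on the fact that, because $p>0$ on the compact set $\overline{U\times[0,T_0]}$, equation \eqref{gas-equation} is uniformly parabolic there, and then a comparison/minimum principle applies. The paper simply cites the maximum principle (noting that the Dirichlet datum $B-At$ is decreasing in $t$), while you spell out a concrete version by taking $\underline p=B-At$ as an explicit subsolution and running a Stampacchia-type energy test with $\min(w,0)$ to handle the mixed Dirichlet--Neumann boundary.

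One small caveat: your reduction $\min\{p(x,0);B-At\}=B-At$ uses the specific initial datum \eqref{gas-init-cond}, which is part of the surrounding setup but is not stated as a hypothesis of the lemma itself. Without it the lemma is the general parabolic minimum bound $p(x,t)\ge\min\{\inf_{y}p(y,0);B-At\}$; your energy argument adapts immediately to that case (just replace $B$ by $\min\{B,\inf_y p(y,0)\}$ in the subsolution), so nothing is lost.
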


\begin{proof}
Inequality \eqref{p>B-At} follows from the maximum principle (see, for example, \cite{DiBenedetto}), since the nonlinear equation \eqref{gas-equation} is uniformly parabolic in the domain $D$, and the boundary function in \eqref{bc-gas:dirichlet} is decreasing with time.
\end{proof}

%

%

\subsection {Analytical comparison of the solutions for case of the Darcy flow}    
First the following maximum principle follows from the results in \cite{VazquezPorousBook}.
\begin{lemma}\label{lem:comparison}
Let $(x,t)\subset D = U\times(0,T]$, $U\subset \mathbb{R}^n$ and an elliptic operator $\mathcal{L}$ is defined on $U\times(0,T]$
\begin{equation}\label{L-operator}
\mathcal{L}= \sum_{i,j}^n a_{i,j}(x,t)\frac{\partial^2 }{\partial x_i\partial x_j}+\sum_i^n b_i(x,t)\frac{\partial }{\partial x_i}
\end{equation}
with $ C^{-1}|\xi|^2 \sum_{i,j=1}^n a_{i,j} \xi_i\xi_j \geq C |\xi|^2$, and $\sum_{i=1}^n |b_i|\le C$.

Let $\partial U=\Gamma_1\cup \Gamma_2$, where for  simplicity $\Gamma_1$ and $\Gamma_2$ are nonintersecting compact sets.  Let  $\partial D=U\times \{0\} \cup \partial U \times (0,T] $ be the parabolic boundary of the domain. 

Assume $\bar{u} (x,t)$ and $u(x,t)>0$ in $\bar{D}$ are the solution of inequality \eqref{u-bar-eq} and equation \eqref{u-eq} correspondingly:
\begin{align}
\frac{\partial \bar{u}}{\partial t}-  \mathcal{L}[\bar{u}^2]>0,  \label{u-bar-eq} \\
\frac{\partial u}{\partial t}-  \mathcal{L}[u^2]=0.  \label{u-eq} 
\end{align} 
 Assume also that $u(x,t)$ and $\bar{u}(x,t)$  satisfy the homogeneous Neumman conditions on the boundary $\Gamma_2:$
 \begin{equation}\label{neumann-on-gamma2}
  \left.\frac{\partial u}{\partial n}\right|_{\Gamma_2}=\left. \frac{\partial \bar{u}}{\partial n}\right|_{\Gamma_2}=0.
 \end{equation}

\indent  The following comparison principle holds:   if 
 \begin{equation}\label{bc-comparison}
 \bar{u}\ge u  \ \ \text{on} \ \ U\times\{0\}\cup \Gamma_1\times(0,T],
 \end{equation}
then 
\begin{equation}\label{comparison}
 \bar{u}\ge u \ \text{in} \ D.
 \end{equation}
\end{lemma}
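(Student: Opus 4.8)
The plan is to reduce the nonlinear comparison to a \emph{linear} parabolic maximum principle for the difference $v := \bar u - u$, and then to exclude a negative minimum point by point. First I would subtract the equation \eqref{u-eq} from the strict inequality \eqref{u-bar-eq} and exploit the exact factorization of the quadratic (porous-medium) nonlinearity, $\bar u^2 - u^2 = (\bar u - u)(\bar u + u) = v\,s$ with $s := \bar u + u$, which turns the difference into the single strict inequality
\begin{equation*}
v_t - \mathcal{L}[v\,s] > 0 \quad\text{in } D.
\end{equation*}
No mean-value argument is needed here precisely because the exponent is $2$; the strictness of \eqref{u-bar-eq} is kept so that the maximum-principle step below requires no auxiliary perturbation.

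Next I would linearize. Expanding $\mathcal{L}[vs]=\sum_{i,j}a_{i,j}\partial_{ij}(vs)+\sum_i b_i\partial_i(vs)$ and sorting terms by how many derivatives land on $v$, the principal part is $s\sum_{i,j}a_{i,j}\partial_{ij}v$, the first-order part carries coefficients built from the $a_{i,j}$, the $b_i$ and $\nabla s$, and the zeroth-order part is exactly $v\,\mathcal{L}[s]$. This rewrites the inequality as $Pv>0$, where
\begin{equation*}
Pv = v_t - \sum_{i,j}(s\,a_{i,j})\partial_{ij}v - \tilde b\cdot\nabla v - c\,v, \qquad c := \mathcal{L}[s].
\end{equation*}
The crux of the argument is that $P$ is uniformly parabolic with bounded coefficients on the compact set $\bar D$: since $u,\bar u$ are classical (hence $C^{2,1}$), $\nabla s$ and $D^2 s$ are bounded, so $\tilde b$ and $c$ are bounded; and because $u,\bar u>0$ on $\bar D$ the factor $s=\bar u+u$ is bounded below by a positive constant, so the principal part $(s\,a_{i,j})$ remains uniformly elliptic. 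This is exactly where the hypothesis $u,\bar u>0$ (equivalently the restriction $t<T_{crit}$) enters: it prevents the porous-medium degeneration and keeps the linearized diffusion nondegenerate.

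To neutralize the unsigned zeroth-order coefficient $c$, I would substitute $v = e^{\lambda t}w$ with $\lambda \ge \sup_{\bar D} c$. This leaves the sign of $v$ unchanged, preserves the homogeneous Neumann data on $\Gamma_2$ inherited by $v$ (hence by $w$) from \eqref{neumann-on-gamma2}, and yields $P_\lambda w>0$ for a parabolic operator whose bottom coefficient $c-\lambda$ is now nonpositive. Suppose, for contradiction, that $\min_{\bar D}w=m<0$. A negative minimum in the spatial interior (or at the top $t=T$) is impossible: there $\nabla w=0$, $D^2w\ge0$, $w_t\le0$, and $(c-\lambda)w\ge0$, which forces $P_\lambda w\le0$, contradicting $P_\lambda w>0$. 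By \eqref{bc-comparison} we have $w\ge0$ on $U\times\{0\}$ and on $\Gamma_1\times(0,T]$, so the minimum cannot lie there either.

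The only remaining possibility is a minimum on the Neumann part $\Gamma_2\times(0,T]$, and this is the step I expect to be the main obstacle. I would exclude it with the parabolic Hopf lemma: since the minimum is not attained in the interior, $w>m$ in a one-sided parabolic neighborhood of the minimum point, so the outward normal derivative there is strictly negative, contradicting $\partial w/\partial n=0$ on $\Gamma_2$. Invoking Hopf requires the interior-ball condition on the smooth boundary pieces $\Gamma_1,\Gamma_2$, and a careful accounting of the time direction in the one-sided neighborhood; this, together with the verification that $P$ is genuinely uniformly parabolic with bounded coefficients, is where the real work sits. Concluding $m\ge0$ gives $w\ge0$, hence $v=\bar u-u\ge0$, which is \eqref{comparison}.
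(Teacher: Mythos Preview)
Your argument is correct and is the standard linearization route to a comparison principle for the porous-medium operator in the nondegenerate regime. The factorization $\bar u^2-u^2=(\bar u-u)(\bar u+u)$, the resulting linear uniformly parabolic inequality for $v=\bar u-u$ (uniform because $s=\bar u+u$ is bounded below on $\bar D$), the exponential change $v=e^{\lambda t}w$ to make the zeroth-order coefficient nonpositive, and the Hopf boundary-point lemma to dispose of the Neumann portion $\Gamma_2$ are all sound; the strict inequality in \eqref{u-bar-eq} spares you any $\varepsilon$-perturbation. The only implicit regularity you are using---$u,\bar u\in C^{2,1}(\bar D)$ so that $\nabla s$, $D^2 s$ and hence the lower-order coefficients of $P$ are bounded, and a $C^2$ (interior-ball) boundary so that Hopf applies---is exactly what the paper assumes elsewhere for classical solutions on a $C^2$ domain.

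By way of comparison: the paper does not supply its own proof of this lemma. It is stated with the remark that it ``follows from the results in \cite{VazquezPorousBook}'' and is then used as a black box. Your write-up is therefore more than the paper offers here; it is a self-contained proof rather than a citation. The content is the same comparison principle one extracts from Vazquez's treatment of the porous-medium equation, specialized to the strictly positive (hence uniformly parabolic) situation, and your linearize-then-apply-maximum-principle-with-Hopf argument is precisely how such results are proved.
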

%
%
%

\vspace{0.2cm}

We will now obtain some integral comparison results between actual and auxiliary pressures in case of linear Darcy flow.
Namely, let the Forchheimer coefficient $F=0$  in \eqref{eq:2forch-1}.
Let $p_0(x,t)$ be the auxiliary pressure given in Eq.~\eqref{p_0def} and
$p(x,t)$  be the classical  solution of IBVP \eqref{gas-equation} 
with the PSS boundary conditions \eqref{bc-gas:dirichlet}-\eqref{bc-gas:neum}
and initial data given by $p(x,0)=p_0(x,0)=\sqrt{B^2+2W(x)}$. 
First, we will prove a useful integral identity for the difference $p(x,t)-p_0(x,t)$.



\begin{lemma}\label{lemma-identity}
Suppose $T<T_{crit}=B/A$. Then the following identity holds
\begin{align}\label{gas:p-p0-identity}
&\int_0^{T}\int_U (p+p_0)(p-p_0)^2 \,dx\,dt + \frac14\int_U\left(\nabla \int_0^T\big(p^2-p_0^2\big)\,dt \right)^2 dx\\
&\qquad= -\int_0^{T}\int_U \left( f_0(x,t)\int_t^{T}\big(p^2 -p_0^2\big)\,d\tau\right) dx\,dt\,,\notag 
\end{align}
where $f_0$ is defined in \eqref{gas:def-f0}.
The identity above can be further rewritten as
\begin{align}\label{gas:p-p0-identity-1}
&\int_0^{T}\int_U (p+p_0)(p-p_0)^2 \,dx\,dt + \frac14\int_U\left(\nabla \int_0^T\big(p^2-p_0^2\big)\,dt \right)^2 dx \\
&\qquad= \int_U \int_0^{T} \left( \big(p_0^2(x,t) -p^2(x,t)\big) \int_0^{t}f_0(x,\tau)d\tau\right)dt\, dx\,. \notag
\end{align}
\end{lemma}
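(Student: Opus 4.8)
The plan is to derive the identity by testing the equation for the difference $p-p_0$ against a backward-in-time integral of $p^2-p_0^2$. In the Darcy regime ($F=0$, hence $\beta=0$) the permeability $K_2$ reduces to a constant and the operator becomes $L[p]=\tfrac12\Delta(p^2)$ (after the normalization of the Darcy coefficient that yields the constant $\tfrac14$ below). Since $p$ solves $p_t-L[p]=0$ while $p_0$ solves $p_t-L[p_0]=f_0$ by \eqref{gas:eq:p0}, and both carry the Dirichlet data $B-At$ on $\Gamma_i$, the zero-mass-flux condition on $\Gamma_e$, and the common initial datum $p(x,0)=p_0(x,0)$, subtracting gives $\partial_t(p-p_0)-\tfrac12\Delta(p^2-p_0^2)=-f_0$ with $(p-p_0)|_{\Gamma_i}=0$ and $(p-p_0)(\cdot,0)=0$. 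Writing $v=p^2-p_0^2$, I would introduce the test function $V(x,t)=\int_t^T v(x,\tau)\,d\tau$, which satisfies $V(\cdot,T)=0$, $\partial_t V=-v$, and, since $v=0$ on $\Gamma_i$, also $V=0$ on $\Gamma_i$.

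Multiply the difference equation by $V$ and integrate over $U\times(0,T)$. The source term immediately produces the right-hand side $-\int_0^T\!\int_U f_0 V\,dx\,dt$ of \eqref{gas:p-p0-identity}. For the time-derivative term I would integrate by parts in $t$: the endpoint contributions at $t=T$ and $t=0$ drop out because $V(\cdot,T)=0$ and $(p-p_0)(\cdot,0)=0$, leaving $\int_0^T\!\int_U (p-p_0)v\,dx\,dt=\int_0^T\!\int_U (p+p_0)(p-p_0)^2\,dx\,dt$, the first term on the left. For the elliptic term, Green's identity turns $-\tfrac12\int_U\Delta v\,V\,dx$ into $\tfrac12\int_U\nabla v\cdot\nabla V\,dx$; the two boundary integrals vanish since $V=0$ on $\Gamma_i$ and, by the zero-flux conditions $p\nabla p\cdot N=p_0\nabla p_0\cdot N=0$ on $\Gamma_e$, one has $\nabla v\cdot N=\nabla(p^2-p_0^2)\cdot N=0$ there.

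It then remains to recognize that $\nabla v=-\partial_t(\nabla V)$, so $\nabla v\cdot\nabla V=-\tfrac12\partial_t|\nabla V|^2$; integrating in time and using $\nabla V(\cdot,T)=0$ converts $\tfrac12\int_0^T\!\int_U\nabla v\cdot\nabla V$ into $\tfrac14\int_U|\nabla V(\cdot,0)|^2\,dx=\tfrac14\int_U\big(\nabla\int_0^T(p^2-p_0^2)\,dt\big)^2dx$, the second left-hand term. This establishes \eqref{gas:p-p0-identity}. The reformulation \eqref{gas:p-p0-identity-1} then follows purely by Fubini: on the triangle $\{0\le t\le\tau\le T\}$ the double time integral $\int_0^T f_0(x,t)\int_t^T v(x,\tau)\,d\tau\,dt$ equals $\int_0^T v(x,\tau)\int_0^\tau f_0(x,t)\,dt\,d\tau$, and flipping the sign of $v$ gives the stated form with $\int_0^t f_0\,d\tau$.

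The computation is a chain of integrations by parts, so the only genuinely delicate point is the justification of the vanishing boundary terms and of interchanging $\nabla$ with the time integral — this is where the hypothesis $T<T_{crit}=B/A$ enters, guaranteeing $B-At>0$ on $[0,T]$ so that $p>0$ (cf.\ \eqref{p>B-At}), the equation stays uniformly parabolic, and $p$ is a classical solution for which these manipulations are legitimate. The key idea that makes everything collapse is the choice of the backward-integral test function $V$, engineered so that both the time and the space integrations by parts produce perfect derivatives with vanishing endpoints, forcing all cross terms to combine into the three clean quantities appearing in the identity.
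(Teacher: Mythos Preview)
Your proof is correct and follows essentially the same route as the paper: you multiply the difference equation by the backward-in-time test function $V=\int_t^T(p^2-p_0^2)\,d\tau$ (the paper explicitly credits this idea to Oleinik), integrate by parts in time and space exactly as the authors do, and your derivation of \eqref{gas:p-p0-identity-1} via Fubini on the triangle is equivalent to the paper's integration by parts in $t$ of the $f_0$ term. If anything, your treatment is slightly more explicit about why the boundary integrals on $\Gamma_i$ and $\Gamma_e$ vanish.
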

\begin{proof}
For the Darcy case the equations for $p(x,t)$ and $p_0(x,t)$ take the form
\begin{align}
&\frac{\partial p}{\partial t} = \nabla\cdot(p\nabla p)=\frac12\Delta(p^2)\,,\label{gas:darcy-p} \\
&\frac{\partial p_0}{\partial t} = \nabla\cdot(p_0\nabla p_0)+f_0(x,t)=\frac12\Delta(p_0^2)+f_0(x,t)\,.\label{gas:darcy-p0}
\end{align}
Subtracting the second equation from the first one yields
$$
  \frac{\partial}{\partial t}(p-p_0)=\frac12\Delta\big(p^2-p_0^2\big) - f_0(x,t).
$$
 Following the ideas of Oleinik (see \cite{VazquezPorousBook}), we multiply both sides by the test function $\int_t^T \big(p^2-p_0^2 \big)\,d\tau$, 
 integrate in time form $0$ to $T$ and in space over the domain $U$. Then it follows
 \begin{align}\label{eq:mult-by-test}
  \int_U\int_0^T&\left(\frac{\partial}{\partial t}(p-p_0) \int_t^T\big(p^2-p_0^2\big)\,d\tau \right)\,dt\,dx\\
  =&\frac12\int_U  \int_0^T \left( \Delta\big(p^2-p_0^2\big) \int_t^T\big(p^2-p_0^2\big)\,d\tau\right)\,dt\,dx \notag \\
  &- \int_U\int_0^T\left(f_0 \int_t^T\big(p^2-p_0^2\big)\,d\tau\right) dt\,dx.\notag
 \end{align}
By using integration by part and the fact that $p(x,0)-p_0(x,0)=0$ and $\int_T^T(\cdot) dt=0$,
the integral on the LHS of \eqref{eq:mult-by-test} can be rewritten as
 \begin{align}\label{est:gas-lhs}
  \int_U\int_0^T&\left(\frac{\partial}{\partial t}(p-p_0) \int_t^T\big(p^2-p_0^2\big)\,d\tau \right)\,dt\,dx\\
  &=-\int_U\int_0^T(p-p_0)\cdot\frac{\partial}{\partial t}\left(\int_t^T(p^2-p_0^2)\,d\tau\right)\,dt\,dx\notag\\
    &=\int_U\int_0^T(p-p_0)\big(p^2-p_0^2\big)\,dt\,dx=\int_U\int_0^T(p+p_0)(p-p_0)^2\,dt\,dx\notag.
 \end{align}
By using the divergence theorem the first integral in the RHS of \eqref{eq:mult-by-test} can be rewritten as
  \begin{align}\label{est:gas-rhs}
  &\frac12\int_U  \int_0^T \left( \Delta\big(p^2-p_0^2\big) \int_t^T\big(p^2-p_0^2\big)\,d\tau\right)\,dt\,dx\\
  &=-\frac12\int_U\int_0^T \left( \nabla \big(p^2-p_0^2\big)\cdot \int_t^T\nabla\big(p^2-p_0^2\big)\,d\tau\right)\,dt\,dx\notag\\
  &=\frac14\int_U\int_0^T\frac{\partial}{\partial t} \left( \int_t^T\nabla\big(p^2 -p_0^2\big)\,d\tau\right)^2 \,dt\,dx\notag
   -\frac14\int_U\left[ \int_0^T\nabla\big(p^2-p_0^2\big)\,d\tau\right]^2\,dx\notag.
 \end{align}
Substituting \eqref{est:gas-lhs}  and \eqref{est:gas-rhs} back in \eqref{eq:mult-by-test} we obtain \eqref{gas:p-p0-identity}.
Finally in order to obtain the alternative identity \eqref{gas:p-p0-identity-1} we use the following integration by part
for the right hand side of Eq. \eqref{gas:p-p0-identity}
\begin{align*}
     -\int_U \int_0^{T} &\left( f_0(x,t)\int_t^{T}\big(p^2(x,\tau) -p_0^2(x,\tau)\big)\,d\tau\right) dt\,dx \\
 = & -\int_U \Bigg[ \Bigg(\left. \int_0^t f_0(x,\tau) d\tau \int_t^{T}\big(p^2(x,\tau) -p_0^2(x,\tau) \big)d\tau \right|_0^T \Bigg)\notag\\
   &-\int_0^{T} \left(  \int_0^t f_0(x,\tau) d\tau \right) \left[-\big(p^2(x,t) -p_0^2(x,t) \big)\right] dt\,\Bigg]dx \notag \\
 = &  -\int_U \int_0^{T} \left( \big(p^2(x,t) -p_0^2(x,t)\big) \int_0^{t}f_0(x,\tau)d\tau\right)dt\, dx\,. \notag
\end{align*}

\end{proof}

From the above Lemma follows 
 
 \begin{proposition}
Under the conditions of Lemma \ref{lemma-identity} then  the following comparison holds
\begin{equation}\label{gas:p-p02-f02}
  \left[\int_0^T\!\!\int_U(p^2-p_0^2)\,dx\,d\tau\right]^2\!\leq
 C\int_U\!\left[\int_0^T(p^2-p_0^2)\,d\tau\right]^2\! dx\leq
 C\int_U\! \left(\int_0^{T}f_0(x,t)\,dt\right)^2\!dx.
\end{equation}
\end{proposition}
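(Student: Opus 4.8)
The plan is to establish the two inequalities separately: the first is an immediate consequence of the Cauchy--Schwarz inequality, while the second combines the energy identity \eqref{gas:p-p0-identity} with a Poincar\'e inequality and a self-improving (absorption) argument. Throughout set $w=p^2-p_0^2$ and $G(x)=\int_0^T w\,dt$, and note that since $p=p_0=B-At$ on $\Gamma_i$ we have $w=0$ on $\Gamma_i\times(0,T)$, hence $G|_{\Gamma_i}=0$. For the first inequality, observe that $\int_0^T\!\int_U w\,dx\,dt=\int_U G\,dx$, so pairing $G$ with the constant $1$ gives $\big(\int_U G\,dx\big)^2\le|U|\int_U G^2\,dx$, which is exactly the left-hand estimate with $C=|U|$.

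The decisive preliminary step for the right-hand estimate is to fix the sign of $w$. Because $F=0$, the auxiliary pressure $p_0$ is a strict supersolution, $\partial_t p_0-\tfrac12\Delta(p_0^2)=f_0>0$ by \eqref{gas:def-f0}, whereas $p$ solves $\partial_t p-\tfrac12\Delta(p^2)=0$; the two functions coincide on the parabolic boundary $U\times\{0\}\cup\Gamma_i\times(0,T]$ and both satisfy the homogeneous Neumann condition on $\Gamma_e$. Applying the comparison principle of Lemma~\ref{lem:comparison} with $\mathcal L=\tfrac12\Delta$, $\bar u=p_0$ and $u=p$ (the uniform ellipticity being trivial for the constant-coefficient Laplacian, and $p>0$ on $\overline D$ for $t\le T<T_{crit}$) yields $p_0\ge p$ in $D$, so that $w\le0$. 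In particular $G\le0$, and since $w$ keeps a fixed sign the truncated integral $v(x,t)=\int_t^T w\,d\tau$ obeys the pointwise bound $|v(x,t)|\le\int_0^T|w|\,d\tau=|G(x)|$ for every $t$.

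Now I would exploit \eqref{gas:p-p0-identity}. Its left-hand side contains the nonnegative term $\int_0^T\!\int_U(p+p_0)(p-p_0)^2\,dx\,dt\ge0$ (here $p,p_0>0$), so discarding it leaves $\tfrac14\int_U|\nabla G|^2\,dx\le -\int_0^T\!\int_U f_0\,v\,dx\,dt$. Because $f_0>0$ and $v\le0$, the right-hand side equals $\int_0^T\!\int_U f_0|v|\,dx\,dt$, and the pointwise bound $|v|\le|G|$ turns this into $\int_U|G|\big(\int_0^T f_0\,dt\big)\,dx$. Since $G$ vanishes on $\Gamma_i$ (of positive surface measure) the Poincar\'e inequality gives $\int_U G^2\,dx\le C\int_U|\nabla G|^2\,dx$, whence $\int_U G^2\,dx\le C\int_U|G|\big(\int_0^T f_0\,dt\big)\,dx$. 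A final Cauchy--Schwarz split of the right-hand side, followed by division by $\big(\int_U G^2\,dx\big)^{1/2}$ (the absorption step, the a priori finiteness of $\int_U G^2$ being guaranteed by the boundedness of the classical solution), produces $\int_U G^2\,dx\le C\int_U\big(\int_0^T f_0\,dt\big)^2\,dx$, i.e.\ the right-hand estimate.

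I expect the main obstacle to be the control of $v(x,t)=\int_t^T w\,d\tau$ by $|G(x)|$, since without it the right-hand side of the identity cannot be compared to $\int_0^T f_0\,dt$. This is precisely where the sign information $w\le0$ is indispensable, and hence why the comparison principle of Lemma~\ref{lem:comparison}, together with the strict positivity of both the source $f_0$ and the solution $p$, is the crux of the argument; the remaining Poincar\'e and absorption manipulations are routine once the sign is in hand.
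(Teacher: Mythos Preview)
Your proof is correct and follows essentially the same route as the paper: both use the comparison principle (Lemma~\ref{lem:comparison}) to fix the sign of $p^2-p_0^2$, bound the right-hand side of the energy identity by $\int_U|G|\bigl(\int_0^T f_0\,dt\bigr)\,dx$, and then combine Poincar\'e with an absorption step. The only cosmetic differences are that the paper works from the integrated-by-parts form \eqref{gas:p-p0-identity-1} rather than \eqref{gas:p-p0-identity} and uses Young's inequality with $\varepsilon$ in place of your Cauchy--Schwarz--and--divide; these lead to the same bound.
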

\begin{proof}

Since $f_0(x,t)>0$ for all $(x,t)$ (see \eqref{gas:def-f0}) and according to Lemma \ref{lem:comparison} $p_0^2(x,t)-p^2(x,t)>0$  for all $(x,t)$, in the RHS of \eqref{gas:p-p0-identity-1} we have 
\begin{align}\label{gas:p-p0-estimate}
\int_U \int_0^{T} &\left( \big(p_0^2 -p^2\big) \int_0^{t}f_0(x,\tau)d\tau\right)dt\, dx
\leq  \int_U \int_0^{T} (p_0^2 -p^2)\int_0^{T}f_0(x,\tau)\,d\tau\,dt\, dx\notag
\\
&\qquad =\int_U \int_0^{T} (p_0^2 -p^2)\,dt \int_0^{T}f_0(x,t)\,dt\, dx\notag\\
&\qquad \leq \e\int_U \left(\int_0^{T} (p_0^2 -p^2)\,dt\right)^2\,dx+C_{\e}\int_U \left(\int_0^{T}f_0(x,t)\,dt\right)^2\, dx.
\end{align}

By Poincar\'{e} inequality we have
\begin{align}\label{gas-poincare}
  \left[\int_0^T\!\!\int_U\!(p^2-p_0^2)\,dx\,d\tau\right]^2\!\!\!\leq
 C\int_U\left[\int_0^T\!(p^2-p_0^2)\,d\tau\right]^2\!\!dx\leq
 C\int_U\left[\nabla \int_0^T\!\!(p^2-p_0^2)\,d\tau\right]^2\!\!dx.
\end{align}

Finally, estimating the RHS of   \eqref{gas:p-p0-identity-1} using \eqref{gas:p-p0-estimate}, neglecting the first term in LHS, and using \eqref{gas-poincare} for the second one, we get:
\begin{align*}
 \left[\int_0^T\!\!\int_U(p^2-p_0^2)\,dx\,d\tau\right]^2\!\!\leq
 \e\int_U \left(\int_0^{T} (p_0^2 -p^2)\,dt\right)^2\!\!dx+C_{\e}\int_U \left(\int_0^{T}f_0(x,t)\,dt\right)^2\!\! dx.
\end{align*}
Choosing appropriate $\e$ we get \eqref{gas:p-p02-f02}.
\end{proof}

\vspace{0.1cm}

\subsection{Stability of PI  with respect to initial and  boudnary  data}

In this section we will show that for the given time interval $0\le t\le T_0<T_{crit}$ 
the difference 
between the PI's for the actual and the auxiliary problems becomes small as the parameter $B$ for the initial reserves becomes large. 
As before we consider only the linear 
Darcy case with $F=0$  in \eqref{eq:2forch-1}.

 Let $p(x,t)$  be the classical  solution of IBVP \eqref{gas-equation}-\eqref{bc-gas:neum} with the corresponding productivity index $J[p](t)$  defined by \eqref{def:pi-gas}. Let $p_0(x,t)$ be the auxiliary pressure given by \eqref{p_0def} with the corresponding productivity index $J[p_0]$  defined by \eqref{jp0}. In linear case the original pressure $p(x,t)$ inherits the following properties of the auxiliary pressure $p_0(x,t)$ (for the details see \cite{VazquezPorousBook}): for all $t$,  $0\le t\le T_0<T_{crit}$ 
\begin{align}
  &|\nabla p(x,t)|+\Delta p(x,t)\le C <\infty;\label{constr-deriv}\\
  &|p_t(x,t)|\leq C<\infty.\label{constr-pt}
\end{align}


In the following two lemmas we will obtain estimates 
for $p-p_0$ and $p_t-p_{0,t}$ as $B \rightarrow 0$.
These will be used later to prove Theorem \ref{theo:gas-pi}. 


\begin{lemma}\label{gas:p-p0to0}
Under the constraints \eqref{constr-deriv} for $B>1$
\begin{equation}
 0\ge p-p_0 \ge -\frac{C}{B^2} \quad \mbox{and }\quad 0\ge p^2-p_0^2 \ge -\frac{C}{B}.
\end{equation}
Moreover, if $B\to\infty$
\begin{equation}
 p-p_0\to 0\quad \mbox{and }\quad p^2-p_0^2\to 0.
\end{equation}

\end{lemma}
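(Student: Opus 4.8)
The plan is to trap $p$ between the supersolution $p_0$ (which gives the upper bounds for free) and a slightly lowered profile $p_0-\eta(t)$ that is a subsolution, and to control the gap $\eta$ directly by the pointwise smallness of the source $f_0$ in \eqref{gas:def-f0}. Throughout I fix $T_0<T_{crit}=B/A$ and let $B$ be large; since $T_{crit}\to\infty$ as $B\to\infty$, the constraint $T_0<T_{crit}$ is automatic, and for $B\ge 2AT_0$ we have $s:=B-At\ge B/2$ for all $t\in[0,T_0]$.

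\textbf{Upper bounds.} In the Darcy case $(p,p_0)$ solves \eqref{gas:darcy-p}--\eqref{gas:darcy-p0}, so $p_0$ is a strict supersolution ($\partial_t p_0-\tfrac12\Delta(p_0^2)=f_0>0$) while $p$ is an exact solution; they share the same initial data and the same Dirichlet data $B-At$ on $\Gamma_i$, and both satisfy the homogeneous Neumann condition on $\Gamma_e$ (for $p_0$ this is \eqref{gas:eq:p0c} together with $p_0>0$). The bounds \eqref{constr-deriv}--\eqref{constr-pt} make the coefficient $p+p_0$ of the linearised operator and its first derivatives bounded, and $p+p_0\ge B/2>0$ (using $p\ge\min\{p(x,0),B-At\}\ge B/2$ from \eqref{p>B-At} and $s\ge B/2$), so the hypotheses of Lemma~\ref{lem:comparison} hold and $p_0\ge p$ on $\overline D$. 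Hence $p-p_0\le 0$ and $p^2-p_0^2\le 0$.

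\textbf{Source estimate and subsolution.} Rationalising \eqref{gas:def-f0} gives
\begin{equation*}
0<f_0=\frac{2AW}{\sqrt{s^2+2W}\,\bigl(\sqrt{s^2+2W}+s\bigr)}\le\frac{2A\|W\|_\infty}{s^2}\le\frac{C}{B^2},
\end{equation*}
while \eqref{gradW} yields $\Delta p_0=\Delta W/p_0-|\nabla W|^2/p_0^3$ with $p_0\ge s\ge B/2$, so $|\Delta p_0|\le C/B$. I then set $\underline p:=p_0-\eta(t)$, where $\eta\ge0$ solves $\eta'=2CB^{-2}+CB^{-1}\eta$, $\eta(0)=0$. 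Using $\Delta(\eta^2)=0$ and $\Delta(2\eta p_0)=2\eta\Delta p_0$,
\begin{equation*}
\partial_t\underline p-\tfrac12\Delta(\underline p^{\,2})=f_0-\eta'+\eta\,\Delta p_0\le\frac{C}{B^2}-\eta'+\frac{C}{B}\eta=-\frac{C}{B^2}<0,
\end{equation*}
so $\underline p$ is a strict subsolution. Integrating the ODE and using $(C/B)T_0\le1$ for large $B$ gives $\eta(t)\le C/B^2$ on $[0,T_0]$, so $\underline p\ge B/2-C/B^2>0$. On the parabolic boundary $\underline p(x,0)=p_0(x,0)=p(x,0)$, $\underline p|_{\Gamma_i}=B-At-\eta\le B-At=p$, and $\nabla\underline p\cdot N=\nabla p_0\cdot N=0$ on $\Gamma_e$.

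\textbf{Lower bounds and limit.} Invoking the comparison principle in its companion form to Lemma~\ref{lem:comparison} (strict subsolution $\underline p$ versus solution $p$, proved by the identical maximum-principle argument of \cite{VazquezPorousBook}, the difference-inequality having the same sign), we get $\underline p\le p$, i.e. $p_0-p\le\eta\le C/B^2$; combined with the upper bound this is $0\ge p-p_0\ge -C/B^2$. Factoring $p^2-p_0^2=(p-p_0)(p+p_0)$ and using $0<p+p_0\le 2p_0\le 2\sqrt{B^2+2\|W\|_\infty}\le CB$ gives $0\ge p^2-p_0^2\ge -C/B$ (the case of bounded $B>1$ being absorbed into $C$ by continuity and compactness of $\overline D$). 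Letting $B\to\infty$, both bounds tend to $0$ uniformly on $\overline D$, so $p-p_0\to0$ and $p^2-p_0^2\to0$.

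The main obstacle is the uniform pointwise bound $f_0\le C/B^2$ on $[0,T_0]$: it hinges on $T_0$ being fixed while $T_{crit}=B/A\to\infty$, which keeps $B-At$ comparable to $B$, and it would fail near $t=T_{crit}$ where $f_0\to A$. The second delicate point is checking that $p_0-\eta(t)$ is genuinely a subsolution with matching boundary data — in particular absorbing the zeroth-order term $\eta\,\Delta p_0$ into the ODE for $\eta$ — and legitimately applying Lemma~\ref{lem:comparison} in its subsolution form, which is permitted precisely because \eqref{constr-deriv}--\eqref{constr-pt} render the linearised operator uniformly parabolic with bounded coefficients and strictly positive leading coefficient $p+\underline p\ge B/2$.
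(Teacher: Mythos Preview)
Your proof is correct but follows a somewhat different route from the paper's. The paper first linearizes: writing $u=p-p_0$ and expanding $\tfrac12\Delta(p^2-p_0^2)=\tfrac12\Delta((p+p_0)u)$ yields a linear parabolic equation
\[
u_t-\tfrac12(p+p_0)\Delta u-\nabla(p+p_0)\cdot\nabla u-\tfrac12\Delta(p+p_0)\,u=-f_0
\]
with bounded lower-order coefficients (this is where \eqref{constr-deriv} enters) and leading coefficient $\tfrac12(p+p_0)\sim B$; the bound $|u|\le C\max f_0$ then follows from standard linear barrier arguments \`a la Landis, and $f_0\le C/(1+B^2)$ is estimated exactly as you do. You instead stay at the nonlinear level: $p_0$ is a strict supersolution of the porous-medium equation, and you manufacture a strict subsolution $p_0-\eta(t)$ by absorbing both the source $f_0$ and the extra term $\eta\,\Delta p_0$ into an ODE for $\eta$, then invoke the comparison principle of Lemma~\ref{lem:comparison} twice. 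Both arguments rest on the same pointwise smallness of $f_0$ and on the positivity $p+p_0\gtrsim B$; the paper's linearization is quicker to state (one line plus a citation) while your construction is more explicit about where the $1/B^2$ rate originates. One small remark: the hypotheses of Lemma~\ref{lem:comparison} concern the fixed linear operator $\mathcal L=\tfrac12\Delta$, which trivially satisfies them; what you are really checking when you verify $p+\underline p\ge B/2$ is the uniform parabolicity of the \emph{linearized} difference equation that underlies the proof of that lemma, which is the right thing to check but not literally a hypothesis of the lemma as stated.
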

\begin{proof}
 Let $a(x,t)=p+p_0$, $b_i(x,t)=\partial(p+p_0)/\partial x_i$, $i=1,\dots,n$ and $c(x,t)=\Delta (p+p_0)$.
The direct calculations show that the function $u=p-p_0$ is a solution of 
\begin{align}
&u_t-a\Delta u+\sum_{i=1}^{n} b_i u_{x_i} +c u=-f_0(x,t)\quad \text{in}\ \ U\times(0,T_0], \label{ueq}\\
&u=0 \quad \text{on} \ \ \partial U\times(0,T_0]. \label{uBC}
\end{align}
Under the conditions \eqref{constr-deriv} it follows that $|b_i(x,t)|+c(x,t)\leq C$ independently of time $t$.
Eq.~\eqref{ueq} is parabolic in $u=p-p_0$ and 
\begin{equation}\label{a-estimate}
 C_1B>a(x,t)>C_0B
\end{equation}
for some  $C_0, C_1 >0$. Thus following the standard arguments using the barrier functions (see \cite{LandisBook}) we get that
\begin{equation} \label{estimate p-p0}
- C\max_{0\le t \le T_0} f_0(x,t) \le p-p_0 \le 0.
\end{equation}
Taking $T_0\le \frac12 T_{crit}=\frac{B}{2A}$, one can get that
$$f_0\le \frac{A}{\sqrt{B^2+2W}+ \frac1{8W}\,B^2}\leq \frac{C}{1+B^2}\to 0\ \text{as}\ B\to\infty.$$
Then the result follows from \eqref{estimate p-p0} and \eqref{a-estimate}.
\end{proof}


\begin{lemma}\label{gas:pt-p0t-to0}
 Under the constraints \eqref{constr-deriv}  and \eqref{constr-pt}
 \begin{equation}
  p_t-p_{0,t}\to 0\quad\text{ as}\quad B\to\infty.
 \end{equation}
\end{lemma}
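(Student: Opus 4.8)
The natural plan, continuing the proof of Lemma~\ref{gas:p-p0to0}, is to differentiate the linear equation \eqref{ueq} for $u=p-p_0$ in time and to run the same barrier/maximum-principle argument on $v:=u_t=p_t-p_{0,t}$. This produces a uniformly parabolic equation for $v$ whose leading coefficient $a=p+p_0$ is of size $\sim B$, with $v=0$ on $\Gamma_i$ (since $p_t=p_{0,t}=-A$ there) and small initial data $v(\cdot,0)=-f_0(\cdot,0)$. The step I expect to be the main obstacle is exactly this naive differentiation: it generates source terms $\nabla(p_t+p_{0,t})\cdot\nabla u$ and $\Delta(p_t+p_{0,t})\,u$ involving $\nabla p_t$ and $\Delta p_t$, which are \emph{not} controlled by the available bounds \eqref{constr-deriv}--\eqref{constr-pt}. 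I would therefore reorganize the computation so that all second derivatives fall on the unknown rather than on $p$.

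The starting point is the clean identity obtained by subtracting \eqref{gas:darcy-p} from \eqref{gas:darcy-p0}: with $g:=p^2-p_0^2$,
\begin{equation*}
 p_t-p_{0,t}=\tfrac12\Delta(p^2-p_0^2)-f_0=\tfrac12\Delta g-f_0 .
\end{equation*}
Since $f_0\to0$ uniformly on $\overline{U\times[0,T_0]}$ — from \eqref{gas:def-f0}, using $B-At\ge B/2$ one gets $0<f_0\le C/B^2$ — the lemma reduces to showing $\Delta g\to0$. Here Lemma~\ref{gas:p-p0to0} already gives $g\to0$ uniformly, but uniform smallness of $g$ does not by itself control $\Delta g$, so the Laplacian requires its own argument.

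For that I would use the linear parabolic problem solved by $g$. From $(p^2)_t=p\,\Delta(p^2)$, $(p_0^2)_t=p_0\Delta(p_0^2)+2p_0f_0$ and $\Delta(p_0^2)=2(p_{0,t}-f_0)$ one finds
\begin{equation*}
 g_t-p\,\Delta g=F,\qquad F:=2u(p_{0,t}-f_0)-2p_0f_0,
\end{equation*}
with $g=0$ on $\Gamma_i$, $\nabla g\cdot N=0$ on $\Gamma_e$, and $g(\cdot,0)=0$; by Lemma~\ref{gas:p-p0to0} and the explicit form of $p_0,f_0$ one has $|F|\le C/B$. The decisive step is to differentiate this relation once more in time. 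Writing $h:=g_t$ and eliminating $\Delta g=(h-F)/p$, the function $h$ satisfies
\begin{equation*}
 h_t-p\,\Delta h-c(x,t)\,h=\rho(x,t),\qquad h|_{\Gamma_i}=0,\quad \nabla h\cdot N|_{\Gamma_e}=0,
\end{equation*}
with $h(\cdot,0)=2p_0(\cdot,0)\,v(\cdot,0)=-2p_0(\cdot,0)f_0(\cdot,0)=O(1/B)$. The one delicate point is that $F_t$ contains $u_t=v$, the very quantity under study; but $h=2pv+2up_{0,t}$ gives $v=(h-2up_{0,t})/(2p)$, so this contribution is of the form $(\text{bounded})\cdot h/p$ and is absorbed into $c$, leaving $|c|\le C/B$ and $|\rho|\le C/B^2$. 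Because every second-order derivative now acts on $h$, this uses only \eqref{constr-deriv}--\eqref{constr-pt} for $p$ together with explicit derivatives of $p_0$, sidestepping the obstacle of the first paragraph.

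Finally I would apply the maximum principle (equivalently a barrier/energy estimate as in Lemma~\ref{gas:p-p0to0}) on the fixed slab $U\times[0,T_0]$. Since $p>0$, $|c|\le C/B$, $|\rho|\le C/B^2$ and $\|h(\cdot,0)\|_\infty\le C/B$, this gives $\|h\|_{L^\infty}\le e^{CT_0/B}\bigl(C/B+T_0\,C/B^2\bigr)\le C/B$. Then $\Delta g=(h-F)/p$ with $|h-F|\le C/B$ and $p\ge B/2$ yields $|\Delta g|\le C/B^2$, so that $|p_t-p_{0,t}|=|\tfrac12\Delta g-f_0|\le C/B^2\to0$ as $B\to\infty$, which is the claim.
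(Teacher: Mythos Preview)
Your argument is correct and is essentially the paper's approach in different dress: the quantity $h=g_t=(p^2-p_0^2)_t$ that you study equals $2z$ with $z=pp_t-p_0p_{0,t}$, which is exactly the variable the paper introduces, and both proofs derive a linear uniformly parabolic equation for this quantity with zero-order coefficient $O(1/B)$ and source $o(1)$, then invoke the maximum principle and recover $p_t-p_{0,t}$ from $z$ (or $h$). The paper reaches the equation for $z$ by multiplying \eqref{gas:darcy-p}--\eqref{gas:darcy-p0} by $p$ and $p_0$, differentiating in $t$ and subtracting; you reach the same equation via the intermediate problem $g_t-p\,\Delta g=F$ for $g=p^2-p_0^2$ and one further $t$-derivative, which is slightly cleaner (no second spatial derivatives of $p$ ever appear explicitly) and yields the quantitative rate $|p_t-p_{0,t}|\le C/B^{2}$.
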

\begin{proof}
 Multiplying equations \eqref{gas:darcy-p} and \eqref{gas:darcy-p0} on $p$ and $p_0$ correspondingly we get 
\begin{align*}
&2p p_t -p\Delta(p^2)=0\,, \\
&2p_0 p_{0,t} -p_0\Delta(p_0^2)=f_0p_0\,.
\end{align*}
Let $u=p p_t=\frac12(p^2)_t$ and $u_0=p_0 p_{0,t}=\frac12(p_0^2)_t$. Then differentiating the equations above in $t$ we get
\begin{align*}
&2u_t -p\Delta u=p_t\Delta(p^2)\,, \\
&2u_{0,t} -p_0\Delta u_0=p_{0,t}\Delta(p_0^2)+2f_{0,t}p_0+2f_0p_{0,t}\,.
\end{align*}
Since $2p_t =\Delta(p^2)$ and $2 p_{0,t}-f_0=\Delta(p_0^2)$ we can rewrite
\begin{align*}
&2u_t -p\Delta u=2 p^2_t\,, \\
&2u_{0,t} -p_0\Delta u_0=2p^2_{0,t}+2f_{0,t}p_0+f_0p_{0,t}\,.
\end{align*}
Subtracting two last equations from each other we get
\begin{equation}
 2\frac{\partial}{\partial t}\left(u-u_0\right)-p\Delta u+p_0\Delta u_0=2(p^2_t-p^2_{0,t})-F_1(x,t)\label{eq:u-u0}
\end{equation}
where $F_1(x,t)=2f_{0,t}p_0+f_0p_{0,t}$.
Denoting $z=u-u_0$ and adding on both sides of \eqref{eq:u-u0} the term $p\Delta u_0$ we get
\begin{equation*}
 2\frac{\partial z}{\partial t}-p\Delta z=2(p^2_t-p^2_{0,t})-(p_0-p)\Delta u_0-F_1(x,t).
\end{equation*}
Further, since $p_t=u/p$ and $p_{0,t}=u_0/p_0$ we have
\begin{equation}\label{eq:z-1}
 2\frac{\partial z}{\partial t}-p\Delta z=2\left(\frac{u}{p}-\frac{u_0}{p_0}\right)\left(p_t+p_{0,t}\right)-(p_0-p)\Delta u_0-F_1(x,t).
\end{equation}
In the first term in RHS  adding and subtracting the term $u_0p_0$ in the numerator we get
\begin{equation*}
 \frac{u}{p}-\frac{u_0}{p_0}=\frac{p_0u-u_0p}{pp_0}=\frac{p_0(u-u_0)}{pp_0}-\frac{u_0(p-p_0)}{pp_0}
 =\frac{z}{p}-\frac{(p-p_0)}{p}p_{0,t}.
\end{equation*}
Thus from \eqref{eq:z-1} we have linear equation for $z$
\begin{equation*}
 2\frac{\partial z}{\partial t}-p\Delta z=
 2\dfrac{z}{p}\,C(x,t)+F_2(x,t)-F_1(x,t),
\end{equation*}
where
\begin{align*}
&C(x,t)=\left(p_t+p_{0,t}\right),\\
& F_2(x,t)=(p-p_0)\left(\Delta u_0-\frac{2p_{0,t}}{p}(p_t+p_{0,t})\right).
\end{align*}
 Under conditions \eqref{constr-deriv} and \eqref{constr-pt} and in view of Lemma~\ref{gas:p-p0to0}
 $|C(x,t)|\leq C<\infty$ and $F_2(x,t)+F_1(x,t)\to0$ as $B\to\infty$.
 Then $|z|\to 0$ as $B\to\infty$.
The result follows since 
$$z=u-u_0=pp_t-p_0p_{0,t}=p_0(p_t-p_{0,t})-p_t(p_0-p).$$
\end{proof}


Finally, we will state the main theorem of this section.
\begin{theorem}\label{theo:gas-pi}
Under constraints \eqref{constr-deriv} and \eqref{constr-pt} 
\begin{equation}\label{gas:cond-compar-pi}
 J[p](t)-J[p_0]\to 0\quad \mbox{ for } B\to\infty.
\end{equation}
\end{theorem}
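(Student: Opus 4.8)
The plan is to mimic the structure of Lemma~\ref{lem:pi-diff}: reduce the convergence of the difference of productivity indices to the separate convergence of the numerators (the fluxes) and of the denominators (the pressure drawdowns). Write $\mathcal P[p](t)=\frac1{|U|}\int_Up^2\,dx-\frac1{|\Gamma_i|}\int_{\Gamma_i}p^2\,ds$ for the drawdown of the actual solution and $\mathcal P[p_0]=\frac1{|U|}\int_U2W\,dx$ for that of the auxiliary one. The algebraic identity
\begin{equation*}
J[p](t)-J[p_0]=\frac{Q(t)\bigl(\mathcal P[p_0]-\mathcal P[p](t)\bigr)}{\mathcal P[p](t)\,\mathcal P[p_0]}+\frac{Q(t)-Q_0}{\mathcal P[p_0]}
\end{equation*}
splits the problem. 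The decisive structural fact is that $\mathcal P[p_0]=\frac{2\|W\|_{1}}{|U|}$ is a fixed positive constant, independent of both $B$ and $t$ (it depends only on $A$ through $W$); hence, once the two differences $\mathcal P[p_0]-\mathcal P[p](t)$ and $Q(t)-Q_0$ are shown to vanish as $B\to\infty$, with $Q(t)$ staying bounded and $\mathcal P[p](t)$ bounded away from $0$, the claim \eqref{gas:cond-compar-pi} follows.

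For the drawdown difference I would first exploit that $p$ and $p_0$ share the same Dirichlet trace $B-At$ on $\Gamma_i$ (see \eqref{bc-gas:dirichlet} and \eqref{p_0def}, using $W|_{\Gamma_i}=0$). Consequently $p^2=p_0^2=(B-At)^2$ on $\Gamma_i$, the two well-boundary averages cancel, and
\begin{equation*}
\mathcal P[p](t)-\mathcal P[p_0]=\frac1{|U|}\int_U\bigl(p^2-p_0^2\bigr)\,dx .
\end{equation*}
By Lemma~\ref{gas:p-p0to0} one has $0\ge p^2-p_0^2\ge -C/B$ uniformly on $\overline{U\times(0,T_0]}$, so this difference is $O(1/B)$ and tends to $0$ as $B\to\infty$. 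This simultaneously shows $\mathcal P[p](t)\to\mathcal P[p_0]>0$, so that $\mathcal P[p](t)$ is bounded and bounded away from zero for $B$ large.

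For the flux difference I would integrate the two governing equations over $U$. Integrating \eqref{gas-equation} and using the impermeability condition \eqref{bc-gas:neum} on $\Gamma_e$ expresses $Q(t)$ through $\frac{d}{dt}\int_Up\,dx$; integrating \eqref{gas:eq:p0}, using \eqref{gas:eq:p0c} together with the flux identity \eqref{fluxp0}, expresses $Q_0$ through $\frac{d}{dt}\int_Up_0\,dx$ and $\int_Uf_0\,dx$. Subtracting yields, up to the common orientation sign,
\begin{equation*}
Q(t)-Q_0=\mp\int_U\bigl(p_t-p_{0,t}\bigr)\,dx-\int_Uf_0\,dx .
\end{equation*}
The first integral tends to $0$ by Lemma~\ref{gas:pt-p0t-to0} (and boundedness of $U$), while the second tends to $0$ because, for $t\le T_0\le B/(2A)$, the source $f_0=A\bigl(1-\tfrac{B-At}{\sqrt{(B-At)^2+2W}}\bigr)$ satisfies $f_0\le C/(1+B^2)$, exactly as established in the proof of Lemma~\ref{gas:p-p0to0}. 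Hence $Q(t)-Q_0\to0$, and since $Q_0=|U|A$ is fixed, $Q(t)$ remains bounded.

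Inserting these two facts into the displayed splitting and using that $\mathcal P[p_0]$ is a fixed positive constant while $\mathcal P[p](t)$ and $Q(t)$ are bounded (with $\mathcal P[p](t)$ bounded below) for large $B$, both terms on the right vanish as $B\to\infty$, which is \eqref{gas:cond-compar-pi}. I expect the main obstacle to be the flux term: the honest justification that $\int_U(p_t-p_{0,t})\,dx\to0$ requires the convergence in Lemma~\ref{gas:pt-p0t-to0} to be strong enough (the uniform bound supplied by the barrier and maximum-principle arguments) to pass under the integral, and one must keep the sign conventions in \eqref{gas-total-flux} and \eqref{fluxp0} mutually consistent so that the two boundary contributions combine correctly. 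The drawdown term, by contrast, is essentially immediate from the coincidence of the boundary traces on $\Gamma_i$.
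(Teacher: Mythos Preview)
Your proposal is correct and follows essentially the same route as the paper: both reduce the difference $J[p](t)-J[p_0]$ to the three ingredients $\int_U(p^2-p_0^2)\,dx\to0$ (Lemma~\ref{gas:p-p0to0}), $\int_U(p_t-p_{0,t})\,dx\to0$ (Lemma~\ref{gas:pt-p0t-to0}), and $\int_Uf_0\,dx\to0$, after expressing the fluxes via $\frac{d}{dt}\int_Up\,dx$ and using the coincidence of the Dirichlet traces on $\Gamma_i$. The only cosmetic difference is that you organize the algebra through the Lemma~\ref{lem:pi-diff} splitting $J[p]-J[p_0]=\frac{Q(\mathcal P_0-\mathcal P)}{\mathcal P\mathcal P_0}+\frac{Q-Q_0}{\mathcal P_0}$, whereas the paper performs an equivalent add-and-subtract manipulation directly on the two fractions to arrive at \eqref{jp-jp0}; the needed estimates and conclusions are identical.
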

\begin{proof}
%
From the boundary condition $p|_{\Gamma_i}=B-At$ and the fact that $(B-At)^2=p_0^2(x,t)-2W(x)$ (see \eqref{p_0def}) we get
 \begin{align}
  J[p_0]&-J[p](t)
  =-\cfrac{Q(t)}{\frac1{|U|}\int_U p^2\,dx-\frac1{|\Gamma_i|}\int_{\Gamma_i} p^2\,ds}
  +\frac{Q_0}{\frac1{|U|}\int_U2W(x)\,dx}\notag\\
  &=-\cfrac{-\int_{\Gamma_i}p\nabla p\cdot N\,ds}{\frac1{|U|}\int_U p^2\,dx-(B-At)^2}
  +\frac{-\int_{\Gamma_i}p_0\nabla p_0\cdot N\,ds}{\frac1{|U|}\int_U2W(x)\,dx}\notag\\  
   &=\cfrac{\frac{d}{dt}\int_U p(x,t)\,dx}{\frac1{|U|}\int_U (p^2-p_0^2)\,dx+\frac1{|U|}\int_U2W(x)\,dx}
   -\frac{\frac{d}{dt}\int_{U}p_0(x,t)\,dx-\int_Uf_0(x,t)\,dx}{\frac1{|U|}\int_U2W(x)\,dx}\notag
 \end{align}
 Adding and subtracting the term $\frac{d}{dt}\int_{U}p_0(x,t)\,dx$ in the numerator of the first fraction we get
  \begin{align}
  &(J[p_0]-J[p](t))\cdot |U|= \cfrac{\frac{d}{dt}\int_U (p-p_0)\,dx}{\int_U (p^2-p_0^2)\,dx+2\int_UW\,dx} +\frac{\int_Uf_0(x,t)\,dx}{2\int_UW\,dx}\notag\\
   &+\frac{d}{dt}\left(\int_U p_0\,dx\right)
   \left[\cfrac{1}{\int_U (p^2-p_0^2)\,dx+2\int_UW\,dx}   -\frac{1}{2\int_UW\,dx}\right]
  \notag\\ 
  &=  \cfrac{\frac{d}{dt}\int_U (p-p_0)\,dx}{\int_U (p^2-p_0^2)\,dx+2\int_UW\,dx}
  +\frac{\frac{d}{dt}\left(\int_U p_0\,dx\right)}{2\int_UW\,dx}\cdot
   \cfrac{-\int_U (p^2-p_0^2)\,dx}{\int_U (p^2-p_0^2)\,dx+2\int_UW\,dx}\notag\\
   &\hspace{8.cm}+\frac{\int_Uf_0(x,t)\,dx}{2\int_UW\,dx}.\notag
 \end{align}
 Finally
   \begin{align}\label{jp-jp0}
  (J[p_0]-J[p](t))\cdot |U|&= \frac{\int_Uf_0(x,t)\,dx}{2\int_UW\,dx}+ \frac{1}{\int_U (p^2-p_0^2)\,dx+2\int_UW\,dx}\times\notag\\
  &
  \times\,\left[\frac{d}{dt}\int_U (p-p_0)\,dx
  -\int_U (p^2-p_0^2)\,dx\,\cdot\,\frac{\frac{d}{dt}\left(\int_U p_0\,dx\right)}{2\int_UW\,dx} \right].
 \end{align}
 
\noindent Since since $f_0\to 0$ as $B\to\infty$ (see \eqref{gas:def-f0}) then
 \begin{align*}
   \frac{\int_Uf_0(x,t)\,dx}{2\int_UW(x)\,dx}\to 0 \quad \text{as $B\to\infty$}.
 \end{align*}
Then \eqref{gas:cond-compar-pi} follows from Lemma~\ref{gas:p-p0to0} and Lemma~\ref{gas:pt-p0t-to0}.
\end{proof}



\vspace{0.1cm}

\section{Conclusions}
\begin{itemize}
 \item The notion of diffusive capacity/productivity index is studied in case of Forchheimer flow of slightly compressible and strongly compressible fluids. In general case the PI is a time dependent integral functional over the pressure function.
 
  \item In case of slightly compressible fluid we consider two types of boundary profile on the well-boundary: the total flux condition and Dirichlet boundary condition. In both cases we prove the convergence of the time dependent PI to a constant value without any constraints on the degree of nonlinearity of Forchheimer polynomial. This generalizes our previous work \cite{ABI12}, requiring the estimates on the mixed term $|\n p|^{2-a}|p|^{\ah-2}$, $\ah\neq 2$, and resulting in stronger constraints on smoothness of boundary data. 
  
  \item In case of strongly compressible fluid, the ideal gas, we  study the Dirichlet boundary problem. The quantity $B-At$  prescribed on the well-boundary specifies the gas reserves at the moment $t$.   We show numerically that the PI stays the same until it suddenly blows up when time approaches the critical value $T_{crit}=B/A$. This fact corresponds with field observations. We associate the constant PI with the special pressure $p_0(x,t)$ and in case of linear flow of the ideal gas we analytically study the relation between general pressure and $p_0(x,t)$. The results on stability of the PI with respect to the initial gas reserves are obtained. 
\end{itemize}

\section{Acknowledgments}
The research of the first and third authors was partially supported by 
the National Science Foundation grant DMS-1412796.


\bibliography{reference}{}
\bibliographystyle{plain}

%
%
%
%

\end{document}